\def\R{\mathbb{R}}
\def\Z{\mathbb{Z}}
\def\d{|\nabla|}
\def\p{\partial}
\def\vo{\vspace{1\baselineskip}}
\def\be{\begin{equation}}
\def\ee{\end{equation}}
\newtheorem{theorem}{Theorem}[section]
\newtheorem{lemma}{Lemma}[section]
\theoremstyle{definition}
\theoremstyle{remark}
\newtheorem{remark}{Remark}[section]
\numberwithin{equation}{section}
\begin{document}
\title[   2D Ericksen Leslie   ]{ Global solution of  2D hyperbolic liquid crystal system for small initial data }
 
\author{Xuecheng Wang}
\address{YMSC, Tsinghua University \& BIMSA\\ Beijing, China 100084}
\email{xuecheng@tsinghua.edu.cn}

\maketitle 

\begin{abstract}

We prove the global stability of small perturbation  near the constant equilibrium  for  the two dimensional simplified Ericksen-Leslie’s hyperbolic system
for incompressible liquid crystal model, where the direction function of  liquid crystal molecules satisfies a wave map
equation with an acoustical metric. This improves the almost global existence result by Huang-Jiang-Zhao \cite{HJZ}. As  byproducts, we obtain the sharp (same as the linear solution) $L^\infty_x$-decay estimates for both the heat part and the wave part. Moreover the nonlinear wave part scatters to a linear solution as time goes to infinity. 

This paper's main contribution is the discovery of a novel null structure within the velocity equation's wave-type quadratic self-interaction. This structure compensates the insufficient decay rate in 2D, which previously hindered the establishment of global regularity for small data.
\end{abstract}

\section{Introduction}
This paper is devoted to studying the small data global regularity problem for the  2D simplified Ericksen-Leslie's hyperbolic liquid crystal model, which  read as follows, 
\be\label{2Dmaineqn}
\left\{\begin{array}{l}
\p_t u +u\cdot \nabla u +\nabla p = \Delta u - \nabla\cdot\big(\nabla d \otimes \nabla d  \big), \quad \nabla\cdot u =0,\\ 
D_t^2 d -\Delta d = \big(- |D_t d |^2 + |\nabla_x d|^2 \big) d,
\end{array}\right.
\ee
where $D_t:=\p_t +u \cdot\nabla_x, d \in \mathbb{S}^1.$ For the general Ericksen-Leslie's system, see \cite{HJZ,Jiang-Luo-2018} and classic references \cite{Ericksen-1961-TSR,Ericksen-1987-RM,Ericksen-1990-ARMA,Leslie-1968-ARMA,Leslie-1979}. Conceptually, the simplified system arises by setting specific coefficients to zero in the general system.

If  we parametrize $d=(\cos\phi, \sin \phi )$, see Huang-Jiang-Zhao \cite{HJZ},   then the following system of equation holds, 
\be\label{mainsysnew}
\left\{\begin{array}{rl}
\p_t u - \Delta u &= - u\cdot\nabla u - \nabla p -\p_i( \nabla \phi \p_i\phi), \quad \nabla\cdot u =0\\
(\p_t^2- \Delta )\phi&= -u\cdot \nabla(\p_t\phi + u\cdot\nabla\phi) - \p_t (u\cdot \nabla \phi) \\
&=-2\p_i(u_i\p_t \phi) - \p_i(\p_t u_i  \phi)- \p_i(u_i u_j \p_j \phi) =:\mathcal{N}_\phi.  \\
\end{array}\right. 
\ee
 
For the physical background of the  system \eqref{mainsysnew}, we refer readers to \cite{HJZ} and references therein for more details. The system \eqref{mainsysnew} came to the author's attention when one of the authors of \cite{HJZ}, Prof. Ning Jiang, presented their small data almost global result in  ``Workshop on the recent progress of kinetic theory and related topics", Jan 15--19, 2024, Sanya, China.

\subsection{Main difficulty in $2D$ and  previous results}

Despite extensive research on nonlinear wave and heat equations, a comprehensive understanding of their quadratic interaction remains elusive.

 Thanks to the work of Cai-Wang \cite{CW} and Huang-Jiang-Luo-Zhao \cite{HJLZ1,HJLZ2}, the small data global regularity problem for the $3D$ case is now well understood. Unfortunately, comparing with the $t^{-1}$ decay rate in $3D$,   the optimal decay rate for the  nonlinear wave equation in $2D$ is only $t^{-1/2}$.  Due to the insufficient decay, the heat-type nonlinearities in \eqref{mainsysnew}, involving wave-type quadratic self-interaction, prevent closure of the energy estimate using a standard  $L^\infty_x-L^2_x$-type  bilinear estimate.  Consequently, establishing long-time existence for \eqref{mainsysnew} in $2D$ is a highly nontrivial problem; demonstrating global existence presents an even greater challenge.

  Interestingly,  Huang-Jiang-Zhao \cite{HJZ} successfully applied Alinhac's ghost weight method \cite{A01}, identifying a suitable unknown variable. While this method is well-established for wave equations, its application to the heat equation in this context is novel and yields an almost global existence result for small initial data.

The long time behavior of solutions beyond their almost global existence remained an open question. Specifically, do solutions exhibit finite-time blow-up, or do they remain bounded, and if so, what is their asymptotic behavior?

This paper aims to answer the above questions by demonstrating global existence and asymptotic scattering for the system \eqref{mainsysnew}. Our proof centers on uncovering a previously unrecognized null structure in the velocity equation, resulting from a cancellation between the pressure term $\nabla p $  and $ \p_i( \nabla \phi \p_i\phi)$. This null structure (see subsection \ref{null}) allows for effective control of nonlinear interactions (see subsection \ref{exploiting}), leading to the global existence  and scattering results. The inherent generality of this null structure implies its potential applicability across hyperbolic liquid crystal models. Notably, this structure also appears in the general Ericksen-Leslie's system.

\subsection{Main result of this paper}

The main result of this paper is stated as follows, 
\begin{theorem}\label{maintheorem}
Let $N_0=10^{5}$. There exists   absolute small constant $\epsilon_0$ such that if the initial data $(u_0, \phi_0, \phi_1)$ of the system \eqref{mainsysnew} satisfies the following smallness condition 
\be\label{initialdatacond}
\|u_0\|_{H^{N_0 +1}}+ \|U_0\|_{H^{N_0}} + \||x|\nabla_x  u_0\|_{L^2} +\sum_{|\alpha|\leq 1}  \||x|\nabla_x^\alpha  U_0\|_{L^2 }+\sum_{|\beta|\leq 11}  \| \nabla_x^\beta U_0\|_{L^1_x }+  \| |x| \nabla_x^\beta u_0\|_{L^1_x }\leq \epsilon_0, 
\ee
where $U_0:=\phi_1+ i \d \phi_0.$ Then the system \eqref{mainsysnew} has unique global solution. The nonlinear solution $\phi(t)$ scatters to a linear solution $\phi_\infty(t)$. Moreover, the following sharp decay estimates hold for $u$ and $\phi$, 
\be
 \sum_{|\alpha|\leq 3} \langle t \rangle^{1/2}\|\nabla_x^\alpha  \nabla_{t,x}  \phi\|_{L^\infty_x} + \langle t \rangle^{ }   \| \nabla_x^\alpha u(t)\|_{L^\infty} \lesssim  \epsilon_0.
\ee
\end{theorem}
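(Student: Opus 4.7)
My overall plan is a bootstrap argument on two levels: a high-regularity energy estimate that propagates the $H^{N_0+1}\times H^{N_0}$ norm of $(u,U)$, and a low-regularity dispersive/weighted bootstrap delivering the sharp $L^\infty$ decay and a scattering-type control. After introducing the half-wave variable $U=\partial_t\phi+i|\nabla|\phi$ and decomposing $\phi$ into half-wave components $\phi_\pm$ of each characteristic sign, I would work with the Duhamel representations using the heat semigroup $e^{t\Delta}$ for $u$ and the half-wave semigroups $e^{\mp it|\nabla|}$ for $U_\pm$. The bootstrap hypotheses would combine (i) the stated Sobolev bound, (ii) a low-order weighted $L^2$ bound of Klainerman vector-field type, fueled by the $\||x|\nabla u_0\|_{L^2}$ and $\||x|\nabla U_0\|_{L^2}$ data, and (iii) the claimed pointwise decay with a small slack of size $\epsilon_0^{1/2}$ ready to be improved to $\epsilon_0$.

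\textbf{Extracting the null structure.} The heart of the proof is to expose the null structure hinted at in the introduction. Applying the Leray projection $\mathbb{P}=I-\nabla\Delta^{-1}\nabla\cdot$ to the $u$-equation eliminates the pressure and rewrites the wave-wave forcing as $\mathbb{P}[-\partial_i(\nabla\phi\,\partial_i\phi)]$. Localizing the two $\phi$-factors at Fourier frequencies $\eta$ and $\xi-\eta$, symmetrizing, and expanding the Leray multiplier, a short algebraic check shows that the would-be leading term $(\xi\cdot\eta)(\xi\cdot(\xi-\eta))\xi_k/|\xi|^2$ coming from the pressure exactly cancels the leading part of $-\partial_i(\partial_k\phi\,\partial_i\phi)$ when $\eta$ and $\xi-\eta$ are parallel. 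Up to benign factors of $|\eta|$, $|\xi-\eta|$, and $|\xi|$, the resulting bilinear symbol is therefore proportional to $\sin^2\theta$ with $\theta=\angle(\eta,\xi-\eta)$: a classical Klainerman-type angular null form. Since parallel input frequencies is exactly the resonant direction for colinear 2D wave packets, this vanishing is precisely what recovers the missing $\langle t\rangle^{-1/2}$ decay.

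\textbf{Using the null structure and closing the bootstrap.} With the null factor in hand, the wave-wave Duhamel contribution to $u$ admits a bilinear estimate integrable in time in $L^2_x$: one inserts the half-wave expansion, extracts an extra $\langle t\rangle^{-1/2}$ from the $\sin^2\theta$ factor via stationary phase in the angular variable, and uses the parabolic smoothing of $e^{(t-s)\Delta}$ at the output frequency together with the $L^1$-data assumptions to tame low frequencies. The remaining nonlinearities are handled by bilinear $L^\infty_x\times L^2_x$ estimates: $u\cdot\nabla u$ is heat--heat and benefits from the improved $\langle t\rangle^{-1}$ decay of $u$; the mixed terms $\partial_i(u_i\partial_t\phi)$ and $\partial_i(u_iu_j\partial_j\phi)$ in $\mathcal{N}_\phi$ carry an outer derivative that can be integrated by parts in time against the half-wave phase to yield easier cubic remainders; and $\partial_i(\partial_t u_i\,\phi)$ is treated by substituting the $u$-equation for $\partial_t u$. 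For the $\phi$-equation I would follow the Alinhac ghost-weight scheme of \cite{HJZ} for the top-order energy, now fueled by the improved decay of $u$.

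\textbf{Scattering and main obstacle.} Scattering follows once the bilinear estimates above show that $\int_0^\infty e^{\pm is|\nabla|}|\nabla|^{-1}\mathcal{N}_\phi(s)\,ds$ converges absolutely in $H^{N_0}$. The main obstacle I anticipate is the high-high-to-low cascade regime for the wave-wave nonlinearity in the $u$-equation, $|\xi|\ll|\eta|\sim|\xi-\eta|$: there the naive size of the null factor degrades and one must combine the angular gain with the parabolic smoothing at the low output frequency, while also separating the non-resonant $(+{-})$ wave-wave interactions (handled by time-integration by parts) from the space-resonant $(\pm\pm)$ ones (tamed by the null factor) via a careful Littlewood--Paley decomposition. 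Controlling the low-frequency profile of $u$ through the weighted $L^2$ norm and iterating the bootstrap carefully should then close the argument.
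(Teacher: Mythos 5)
Your architecture (bootstrap on energy plus decay norms, half-waves, Duhamel with $e^{t\Delta}$ and $e^{\mp it|\nabla|}$, vector fields, and the observation that the pressure partially cancels $\partial_i(\nabla\phi\,\partial_i\phi)$) matches the paper's, but the key technical claim is wrong, and it is wrong exactly in the regime you identify as the main obstacle. After the Leray projection the wave--wave symbol is
$\widetilde q(\xi-\eta,\eta)=-i\big(\xi\cdot(\xi-\eta)\big)\,|\eta|\big[\tfrac{\xi(\xi\cdot\eta)}{|\xi|^2|\eta|}-\tfrac{\eta}{|\eta|}\big]$,
which factors as the product of an \emph{angular} factor of size $\sin\angle(\xi,\eta)$ (angle between the \emph{output} and one input) and the factor $\xi\cdot(\xi-\eta)=\tfrac12\big(|\xi|^2+|\xi-\eta|^2-|\eta|^2\big)$, which is a null factor adapted to the wave$\times$wave$\to$heat \emph{modulation}, not an angle. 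It is not $O(\sin^2\theta)$ with $\theta=\angle(\eta,\xi-\eta)$: take $\eta=Re_1$ and $\xi=2^{-1/2}|\xi|(e_1+e_2)$ with $|\xi|\ll R$; then $|\widetilde q|\sim|\xi|R^2$, i.e.\ comparable to the naive size $|\xi||\xi-\eta||\eta|$, while $\sin^2\angle(\eta,\xi-\eta)\sim|\xi|^2/R^2$, so your claimed bound fails by a factor $(R/|\xi|)^2$ in the high$\times$high$\to$low regime with generic output direction. Consequently the central step of your plan --- gaining $\langle t\rangle^{-1/2}$ ``from the $\sin^2\theta$ factor via stationary phase in the angular variable'' for the wave--wave forcing of the heat equation --- has no basis precisely where it is needed, and without that gain the $\langle t\rangle^{-1}$ decay of $u$ (which you also need to run the wave energy estimate with only $t^{C\delta}$ growth) cannot be recovered. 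Your sign bookkeeping is also inverted: for the heat output, equal-sign pairs $(\pm,\pm)$ have time phase $|\xi-\eta|+|\eta|$ and are the ones removed by time integration by parts, while the opposite-sign pairs are the dangerous high$\times$high$\to$low interactions.

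What the structure actually buys, and what the paper does instead: for opposite signs one has $\big||\xi-\eta|-|\eta|\big|\le|\xi|$, so the modulation $-|\xi|^2+i(\mu|\xi-\eta|+\nu|\eta|)$ degenerates only when $|\xi|\to0$ and $|\xi-\eta|\approx|\eta|$, and there the factor $\xi\cdot(\xi-\eta)$ vanishes at the same rate; hence the normal-form symbol $\widetilde q/\big(-|\xi|^2+i(\mu|\xi-\eta|+\nu|\eta|)\big)$ is non-singular. The paper defines $v=u+\sum_{\mu,\nu}A_{\mu,\nu}(U^\mu,U^\nu)$ with exactly this symbol, which eliminates the wave-type quadratic self-interaction from the heat equation (leaving cubic remainders), and only then uses the residual angular (w$\times$w$\to$w) factor of $A_{\mu,\nu}$, together with super-localized linear decay estimates, in the $Z$-norm/decay analysis; nothing in your outline replaces this step. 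Two further discrepancies are minor but worth noting: the paper closes the top-order wave energy with a quasilinear modified energy containing cubic corrections (not the ghost weight of \cite{HJZ}), and scattering is obtained only in a weighted $L^\infty_\xi$ ($Z$-type) norm --- absolute convergence of $\int_0^\infty e^{is|\nabla|}\mathcal{N}_\phi(s)\,ds$ in $H^{N_0}$ is not available, since $\|\mathcal{N}_\phi(s)\|_{H^{N_0}}$ decays only like $s^{-1+\delta}$.
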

\begin{remark}
The plausible goal of optimizing  $N_0$, which is less interesting, is not pursued here.
\end{remark}
\begin{remark}
Comparing with the result in \cite{HJZ}, we actually prove a stronger result about the energy of the heat part. Instead of allowing the energy of the heat part to grow at rate $\langle t\rangle^\delta$ (as in \cite{HJZ}),  we show that the energy of the heat component decays at a rate of $t^{-1/2+\delta}$, closely matching the decay rate of free heat flow  in $L^2$-space,   see \eqref{bootstrap}.  
\end{remark}

\subsection{Hidden null structure}\label{null}

This subsection presents a detailed analysis of the null structure inherent in the velocity equation \eqref{mainsysnew}. To facilitate this analysis, we first perform a reduction of the nonlinear term $\mathcal{N}_u$.

 Note that, by using the divergence free condition, we can solve the pressure as follows, 
\[
\Delta p = -\p_j(u_i\p_i u_j )- \p_i\p_j(\p_i \phi \p_j \phi ), \quad \nabla p = -\nabla \Delta^{-1}\p_j(u_i\p_i u_j )-  \nabla \Delta^{-1}\p_i\p_j(\p_i \phi \p_j \phi ). 
\]
Therefore, we can formulate the equation satisfied by velocity $u$ as follows, 
\be\label{velocityeqn}
\begin{split}
\p_t u - \Delta u & = \mathcal{N}_u:=  Q(u, u) +\widetilde{Q}(\phi, \phi ), \\
Q(u, u)&= -u\cdot \nabla u + \nabla \Delta^{-1}\p_j(u_i\p_i u_j )=-\p_i(u_i u)+  \nabla \Delta^{-1}\p_i \p_j(u_i u_j ), \\
\widetilde{Q}(\phi, \phi )& =  \nabla \Delta^{-1}\p_i\p_j(\p_i \phi \p_j \phi )-\p_i( \p_i\phi  \nabla \phi ) .
\end{split}
\ee

Note that that terms in  $\mathcal{N}_u$ always have one derivative outside, which contributes the smallness of output frequency. This observation  motivates us to formulate $\mathcal{N}_u$ as follows, 
\be\label{2024feb27eqn41}
\mathcal{N}_u=\d \widetilde{\mathcal{N}}_u, \quad \widetilde{\mathcal{N}}_u:=  -R_i(u_i u)-  R R_iR_j(u_i u_j )  - R R_i R_j (\p_i \phi \p_j \phi )-R_i( \p_i\phi  \nabla \phi ),
\ee
where $R_i=\p_{i}\d^{-1}, i\in\{1,2,3\},$ denote  the Riesz operator.

Furthermore, we observe a null   structure within the bilinear operator $\widetilde{Q}(\phi, \phi )$,  characterized by its symbol $\widetilde{q}(   \xi-\eta, \eta)$ detailed as follows, 
\be\label{symbolphi}
\begin{split}
\widetilde{q}(   \xi-\eta, \eta) &:= -i(\xi\cdot (\xi-\eta))|\eta| \big[ \frac{\xi (\xi\cdot \eta) }{|\xi|^2|\eta|} -  \frac{\eta}{|\eta|} \big].\\
\end{split}
\ee
The correspondence between the bilinear operator and its symbol is specified in equation \eqref{2025nov19eqn1}.

Very interestingly, the null structure presented in  $\widetilde{Q}(\phi, \phi )$ is not only helpful in the wave $\times$ wave $\rightarrow$ wave type interaction, which is more classic,  but also helpful in the wave $\times$ wave $\rightarrow$ heat type interaction, which is new. More precisely,  recall \eqref{symbolphi}, note that, 
\be\label{nullstrucdec}
\begin{split}
& \eta \cdot \xi = \frac{1}{2}\big[(|\eta|-|\eta-\xi|)(|\eta|+|\eta-\xi|) +|\xi|^2 ],\\ 
 \Longrightarrow \quad &  \widetilde{q}(  \xi-\eta,\eta)=q_{null}^{wwh}(\xi-\eta, \eta) q_{null}^{www}(\xi-\eta, \eta), \\
 & q_{null}^{wwh}(\xi-\eta, \eta):= i \frac{|\eta| }{2}\underbrace{\big[(|\eta|-|\eta-\xi|)(|\eta|+|\eta-\xi|) -|\xi|^2 \big]}_{\text{null structure for w$\times$w$\rightarrow$h}}, \\
 & q_{null}^{www}(\xi-\eta, \eta):= \underbrace{\big[ \frac{\xi (\xi\cdot \eta) }{|\xi|^2|\eta|} -  \frac{\eta}{|\eta|} \big].}_{\text{null structure for w$\times$w$\rightarrow$w}}
\end{split}
\ee

\subsection{Ideas of proof: exploiting the benefit of  the null structure}\label{exploiting}

 To better understand the notion of a     ``null structure for  w$\times$w$\rightarrow$h'',  we employ the following normal form transformation, 
\be\label{norform}
v:= u +\sum_{\mu, \nu \in\{+,-\}} A_{\mu, \nu}(U^{\mu}, U^{\nu}), \quad U:=(\p_t -i\d)\phi,
\ee
where $U^{+}:=U$, $U^{-}:=\bar{U}=(\p_t +i\d)\phi$, and the symbol $a_{\mu, \nu}(\xi-\eta, \eta)$ of the bilinear operator $A_{\mu, \nu}(U^{\mu}, U^{\nu})$ is given as follows, 
\be\label{2024feb18eqn1}
a_{\mu, \nu}(\xi-\eta, \eta)=  \frac{ -\mu\nu\widetilde{q}(  \xi-\eta,\eta)}{4\big(-|\xi|^2+i(\mu|\xi-\eta|+ \nu|\eta|)\big)} \frac{1}{|\xi-\eta||\eta|}. 
\ee

Note that, from \eqref{velocityeqn} and the above chosen symbol $a_{\mu, \nu}(\xi-\eta, \eta)$, which aims to cancel out the quadratic interaction of the wave components, we have 
\be\label{normalform}
\begin{split}
(\p_t - \Delta ) v &= Q(u, u) +\widetilde{Q}(\phi, \phi )+\sum_{\mu, \nu \in\{+,-\}} (-\Delta)\big( A_{\mu, \nu}(U^{\mu}, U^{\nu}) \big) + A_{\mu, \nu}(\p_t U^{\mu}, U^{\nu})+A_{\mu, \nu}( U^{\mu}, \p_tU^{\nu})\\
&= Q(u, u)  +  \sum_{\mu, \nu \in\{+,-\}}  A_{\mu, \nu}(\big( (\p_t +i \d) U\big)^{\mu}, U^{\nu})+A_{\mu, \nu}( U^{\mu}, \big( (\p_t +i \d) U\big)^{\nu})\\
&= Q(u, u)  +  \sum_{\mu, \nu \in\{+,-\}}  A_{\mu, \nu}(\big(\mathcal{N}_\phi \big)^{\mu}, U^{\nu})+A_{\mu, \nu}( U^{\mu}, \big( \mathcal{N}_\phi\big)^{\nu})=:\mathcal{N}_v(t).\\
\end{split}
\ee

Recall the detailed formula of $\widetilde{q}(  \xi-\eta,\eta)$ in \eqref{symbolphi} and the detailed formula of $a_{\mu, \nu}(\xi-\eta, \eta)$ in   \eqref{2024feb18eqn1}. The potential singularity of $a_{\mu, \nu}(\xi-\eta, \eta)$ is caused by the smallness of the factor $-|\xi|^2+i(\mu|\xi-\eta|+ \nu|\eta|)$, which vanishes only when $\xi=0$ and $\mu|\xi-\eta|+ \nu|\eta|=0$. Fortunately,  thanks to the ``null structure for w$\times$w$\rightarrow$h'', from  the detailed formula of $q_{null}^{wwh}(\xi-\eta, \eta)$ in \eqref{nullstrucdec}, it's apparent that $q_{null}^{wwh}(\xi-\eta, \eta)$ also vanishes in the same order  when $\xi=0$ and $\mu|\xi-\eta|+ \nu|\eta|=0$. Therefore, the symbol $a_{\mu, \nu}(\xi-\eta, \eta)$ of normal form transformation is non-singular.

 Since the nonlinearity of $v$ (see \eqref{normalform}) doesn't involve wave-type quadratic self-interaction,   the equation for $v$ is more amenable to analysis than the equation for $u$.  Naturally, from \eqref{norform}, we can also view that the velocity $u$ consists of the following two parts: 
\begin{enumerate}
\item[(i)] The heat part $v$, which is expected to exhibit asymptotic behavior consistent with a free heat solution. 
\item[(ii)] The wave-type quadratic self-interaction $ A_{\mu, \nu}(U^{\mu}, U^{\nu})$ constitutes a higher-order perturbation term that is decoupled from the heat flow. 
\end{enumerate}

Remarkably, the normal form transformation \eqref{norform} does not utilize the  w$\times$w$\rightarrow$w type null structure. Consequently, the bilinear form  $ A_{\mu, \nu}(\cdot, \cdot)$ retains its wave-type null structure. 

From \eqref{mainsysnew} and \eqref{norform}, after replacing $u$ by $v$ and $A_{\mu, \nu}(\cdot, \cdot)$ and neglecting the contribution of $v$,  we have the following approximation equation 
\be\label{approx}
(\text{Approximation equation}):\qquad \Box \phi = \sum_{\mu, \nu\in \{+,-\}}Q(A_{\mu, \nu}(U^{\mu}, U^{\nu}), \nabla_{t,x} \phi ),
\ee
where $Q(\cdot, \cdot)$ is some bilinear operator.  

Conceptually,  the lifespan of the solution for the approximation equation \eqref{approx} and the full nonlinear system \eqref{mainsysnew} should be identical. The main reason is that the velocity field decays faster than the wave component (see \eqref{Linfu} and \eqref{finalLinfyphi}), which means the approximation equation \eqref{approx} differs from the original system \eqref{mainsysnew} by a perturbative correction term that is integrable in time within the energy estimate.

Note that, the approximation equation is a 2D wave equation with  cubic nonlinearities. In the absence of a wave-type null structure, the 2D cubic wave equation is expected to exhibit finite-time blow-up, see e.g., \cite{Yin}.  The wave-type null structure of  $ A_{\mu, \nu}(\cdot, \cdot)$ 
  guarantees small data global solutions for the approximation equation \eqref{approx}, which is the primary reason for global solutions of the system \eqref{mainsysnew}.

\subsection{Methods of the proof}

Different from the physical space approach employed in \cite{HJZ}, we mainly analyze the system \eqref{mainsysnew} from the Fourier side. Moreover, we  use both  the strength of the vector field method and the Fourier method. Using the combination of these two methods is not new, e.g., it works successfully  in the seminal works of  Germain-Masmoudi-Shatah \cite{GerMasSha09,GerMasSha12,GerMasSha15} and Ionescu-Pusateri \cite{Ionescupusateri,Ionescupusateri2} for the study of water waves systems and Ionescu-Pausader  \cite{IP} for the study of Einstein-Klein-Gordon system,  see also Wang \cite{Wang} for the study of Einstein-Vlasov system.

For the sake of readers, we give a brief introduction on literature of these two methods.   The celebrated Klainerman vector field method was introduced in the seminal work   \cite{Kl85},   in which the symmetries    of the wave operator are exploited to prove decay estimate of the nonlinear wave equation, see also \cite{Kla82,Kla83,KM}.  A great  achievement of the vector field method lies in the monumental work, global stability of  the Minkowski spacetime,  by  Christodoulou-Klainerman \cite{ChrKl93}.  For the past fifteen years,  the Fourier method, which studies    nonlinear solutions on the Fourier side and   controls  the pull back of the nonlinear solution along the linear flow  over time, also plays an important role in the study of small data global regularity problem for nonlinear dispersive PDEs and NLW. Since the literature is too vast to survey, for the purpose of giving a sense for readers, we only mention the  spacetime resonance method and the $Z$-norm method.  The  spacetime resonance method was   introduced by Germain-Masmoudi-Shatah \cite{GerMasSha09} in the study of NLS. Now, it has very wide applications in the study of nonlinear dispersive equations, see e.g., \cite{GerMasSha12,GerMasSha14,IP1,IP2,IP3,IP} and  the nonlinear wave equations, see \cite{DenPus20,PusSha13}.   The  $Z$-norm method  was firstly introduced by Ionescu-Pausader in \cite{IP2}. This method is often used together with the spacetime resonance method. It   depends essentially on identifying the “correct”  $Z$-norm,  depending on the problem, to prove sharp  or almost sharp  decay estimates for the nonlinear solution.

\subsection{Main bootstrap assumption}
  
 To prove our main theorem \ref{maintheorem}, we employ the standard bootstrap argument.  Before stating our bootstrap assumption, we define the following normed space.
 \be\label{highphi}
 \begin{split}
  Z_{u}(t)&:=\sum_{k\in \Z} 2^{k+4k_+}  \|P_k u\|_{L^2},\\ 
 Z_{\phi}^{}(t)&:= \sum_{k\in \Z} 2^{k/3+10k_{+}}\big( \|\p_t\widehat{\phi}(t,\xi)\psi_k(\xi)\|_{L^\infty_\xi}+ 2^k \|\widehat{\phi}(t,\xi)\psi_k(\xi)\|_{L^\infty_\xi}\big). \\ 
 \end{split}
 \ee
For any $ \Gamma\in \{Id, S:= t\p_t +x\cdot \nabla_x, \Omega:=x_1\p_{x_2}-x_2\p_{x_1}\}, $   
 we define the   half wave and its profile as follows, 
\be\label{profphi}
\begin{split}
U^\Gamma(t) &:=(\p_t -i \d)\Gamma\phi, \quad 
 \Longrightarrow\quad \p_t \Gamma \phi  = \frac{U^\Gamma (t)+\overline{U^\Gamma(t)}}{2}, \quad \Gamma \phi = \sum_{\mu\in \{+,-\}}\frac{\mu}{2i\d} (U^\Gamma(t))^{\mu},  \\
 V^\Gamma(t)&:=e^{i t\d} U^\Gamma(t),\quad \Longrightarrow \p_t V^\Gamma(t)= e^{i t\d} \mathcal{N}^\Gamma_\phi, \quad U^\Gamma(t)= e^{-it\d} U^\Gamma(0) + \int_0^t e^{-i(t-s)\d} \mathcal{N}^\Gamma_{\phi}(s) d s.   \\
\end{split}
\ee
For simplicity in notation, we abbreviate $U^{Id}$ and $V^{Id}$   as $U$ and $V$ respectively.

With above preparations,  now we state the main bootstrap assumption   as follows, 
 \be\label{bootstrap}
 \begin{split}
\sup_{t\in [0, T]} &\langle t \rangle^{1/2-\delta }\|u(t)\|_{H^{N_0+1}}+ \langle t \rangle^{1/2-2\delta }\big(\sum_{\Gamma\in \{S, \Omega\}} \|\Gamma u(t)\|_{H^{1}} \big)  + \langle t\rangle^{}  Z_u(t) + \langle  t \rangle^{-\delta} \|U(t)\|_{H^{N_0}}\\
 &+ \langle t \rangle^{-2\delta }\big(\sum_{\Gamma\in \{S, \Omega\}} \|U^\Gamma(t)\|_{L^{2}} + \sum_{k\in \Z_{-}} 2^{-\alpha k} \| P_k U^{\Gamma}(t)\|_{L^2} \big) + Z_{\phi}(t) \lesssim \epsilon_1:=\epsilon_0^{3/4}, \quad \alpha:=1/10, \\
\end{split}
 \ee
 where, from the local theory, $T>0$ is some large constant. 

As a direct consequence of the above bootstrap assumption, we have the   sharp decay estimate for the velocity field $u$ as follows, 
\be\label{Linfu}
\sum_{0\leq |\alpha|\leq 3} \|\nabla_x^\alpha u(t)\|_{L^\infty}\lesssim \langle t \rangle^{-1}\epsilon_1. 
\ee
The rest of this paper is devoted to improve the upper bound in the bootstrap assumption \eqref{bootstrap}.

\subsection{Outline of this paper}

\begin{enumerate}
\item[$\bullet$] In section \ref{prel}, we introduce the notation used in this paper, a super-localized decay estimate, which plays an important role in later High $\times$ High type interaction. 

\item[$\bullet$]In section \ref{energy}, we do energy estimates for both the velocity part and the wave part.  
\item[$\bullet$] In section \ref{decay}, we estimate the $Z$-norms for the velocity part and the wave part, which give us the sharp decay estimates for nonlinear solutions.  
\end{enumerate}
\vo

 \noindent \textbf{Acknowledgment}\quad  The author acknowledges support from NSFC-12322110,12326602,12141102,  MOST-2020YFA0713003, and New Cornerstone Investigator Program 100001127.  The author gratefully acknowledges the anonymous referee’s comments and suggestions, which improved this paper’s presentation.
 \vo 

\noindent \textbf{Data/code availability statement:}\quad  The manuscript has no associated data or code. 

\vo 

\noindent \textbf{Conflict of interest:}\quad The author declares that he  has no conflict of interest.

 \section{Preliminary}\label{prel}
 \subsection{Notation}
 For any two numbers $A$ and $B$, we use  $A\lesssim B$, $A\approx B$,  and $A\ll B$ to denote  $A\leq C B$, $|A-B|\leq c A$, and $A\leq c B$ respectively, where $C$ is an absolute constant and $c$ is a sufficiently small absolute constant. We use $A\sim B$ to denote the case when $A\lesssim B$ and $B\lesssim A$.

For any $k\in \Z$, we use $k_{+}$ to denote $\max\{k,0\}$ and use $k_{-}$ to denote $\min\{k,0\}$. Moreover, for any $k\in \Z,$ we use  $P_{k}$, $P_{\leq k}$ and $P_{\geq k}$ to denote the projection operators  by the Fourier multipliers $\psi_{k}(\cdot),$ $\psi_{\leq k}(\cdot)$ and $\psi_{\geq k }(\cdot)$ respectively. We use $P_{[k_1,k_2]}$ to denotes $\sum_{k\in[k_1,k_2]}P_k$.  For convenience in notation, we also use  $f_{k}(x)$ to abbreviate $P_{k} f(x)$. We use both $\widehat{f}(\xi)$ and $\mathcal{F}[f](\xi)$ to denote the Fourier transform of $f$ and use $\mathcal{F}^{-1}[g](x)$ to denote the inverse Fourier transform of $g$.

 The symbol of the bilinear form $A(f,g)$ is defined as the function  $a(\xi-\eta, \eta)$, where the action of the operator is given by:
\be\label{2025nov19eqn1} 
A(f, g)(x) := \int_{\mathbb{R}^3} \int_{\mathbb{R}^3} e^{i x \cdot \xi} a(\xi - \eta, \eta) \widehat{f}(\xi - \eta) \widehat{g}(\eta) \, d\eta \, d\xi.
\ee 
 For any $m\in \Z_+$, the symbol $m(\underbrace{\xi-\eta,\eta-\sigma,\cdots, \kappa}_{\text{$m$-number of inputs}})$ of the $m$-linear operator $T_m(f_1,\cdots, f_m)$ can be defined similarly.

We define a class of symbol and its associated norms as follows,
\be\label{symbolclass}
\begin{split}
&\mathcal{S}^\infty(\R^4):=\{ m: m:\R^2\times\R^2
\rightarrow \mathbb{C} ,    \| m\|_{\mathcal{S}^\infty(\R^4)}:=\sum_{|\alpha|+|\beta|\leq 20}\|  \xi^\alpha \eta^\beta \nabla_\xi^\alpha \nabla_\eta^\beta m(\xi, \eta)\|_{L^\infty_{\xi, \eta}}< \infty\},\\ 
&\mathcal{S}^\infty(\R^6):=\{ m: m:\R^2\times\R^2\times\R^2
\rightarrow \mathbb{C} ,  \\ 
&\qquad\qquad  \| m\|_{\mathcal{S}^\infty(\R^6)}:=\sum_{|\alpha|+|\beta|+|\gamma|\leq 20}\|  \xi^\alpha \eta^\beta  \sigma^\gamma \nabla_\xi^\alpha \nabla_\eta^\beta\nabla_\sigma^\gamma m(\xi, \eta,\sigma)\|_{L^\infty_{\xi, \eta,\sigma}}< \infty\},\\ 
&\|m(\xi,\eta)\|_{\mathcal{S}^\infty_{k,k_1,k_2}}:= \| m(\xi, \eta)\psi_k(\xi)\psi_{k_1}(\xi-\eta)\psi_{k_2}(\eta)\|_{\mathcal{S}^\infty(\R^4)},\\
& \|m(\xi,\eta,\sigma)\|_{\mathcal{S}^\infty_{k,k_1,k_2,k_3}}:=\|m(\xi, \eta,\sigma)\psi_k(\xi)\psi_{k_1}(\xi-\eta)\psi_{k_2}(\eta-\sigma)\psi_{k_3}(\sigma)\|_{\mathcal{S}^\infty(\R^6)}.\\
 \end{split}
\ee

 For any chosen threshold $\bar{l}\in \Z_{-}, l\in [\bar{l},2]\cap \Z$, we define the cutoff function with the chosen threshold  $\varphi_{l;\bar{l}}(x):\R^3\rightarrow \R$  as follows, 
\be\label{cutoffthr}
\varphi_{l;\bar{l}}(x):= \left\{\begin{array}{ll}
\psi_{\leq \bar{l}}(x) & \textup{if}\, l=\bar{l},\\
\psi_{l}(x) & \textup{if}\, l\in (\bar{l}, 2),\\
\psi_{\geq 2}(x) & \textup{if}\, l= 2.\\
\end{array}\right. 
\ee

 We  define the following index sets, which correspond to   High $\times$ High type interaction, Low $\times$ High type interaction, and High $\times$ Low type interaction respectively, 
\be\label{2024march17eqn1}
\begin{split}
\chi_k^1&:=\{(k_1, k_2):k_1,k_2\in \Z, |k_1-k_2|\leq 10, k_1\geq k-10\},\\
\chi_k^2&:=\{(k_1, k_2):k_1,k_2\in \Z, |k_2-k|\leq 10, k_1\leq  k-10\},\\
\chi_k^3&:=\{(k_1, k_2):k_1,k_2\in \Z, |k_1-k|\leq 10, k_2\leq k-10\}.\\
\end{split}
\ee

 To study the   High$\times$High $\longrightarrow$ Low type interaction in the energy estimate, we exploit the following decay estimate with  the super-localized cutoff function. Comparing with the usual dyadic decomposition,   the width of the  annulus, which is the support of super-localized cutoff function,   is not comparable with the radius of annulus. The idea of using super-localized decay estimate to get around the summation issue of the   High$\times$High $\longrightarrow$ Low type interaction goes back to the work of Ionescu-Pausader \cite{IP} for the study of Einstein-Klein-Gordon system.

\begin{lemma}\label{decaylemma}
For any $t\in [2^{m-1}, 2^m], m\in \mathbb{Z}_+, k,\tilde{k}, n\in \mathbb{Z},  x\in \R^2, \mu \in\{+,-\}$, s.t., $\tilde{k}\leq k$, $n\in [2^{k-2}, 2^{k+2}]\cap \Z$,   $m\gg 1,$ $k\geq -m,$    we have 
\be\label{decayestimate}
\begin{split}
\big|\int_{\R^2}  e^{ i x\cdot \xi-i \mu t |\xi|  }     m(\xi) \widehat{f}(\xi)   \psi_{\tilde{k}}(|\xi|-n)    d \xi \big| & \lesssim    2^{-m/2 } \|m(\xi)\|_{\mathcal{S}^\infty_k}\big[ 2^{\tilde{k}+k/2}  \|\widehat{f}(\xi)\psi_{[\tilde{k}-2,\tilde{k}+2] }(|\xi|-n)\|_{L^\infty_\xi}  \\ 
& + 2^{-m/4+3k/4+\tilde{k}/2}\|\nabla_\xi \widehat{f}(\xi) \psi_{\tilde{k}}(|\xi|-n) \|_{L^2_\xi}  \big].\\
 \end{split}
\ee

\end{lemma}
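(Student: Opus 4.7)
I would establish this super-localized dispersive estimate by passing to polar coordinates and exploiting angular stationary phase on $S^1$, in the spirit of Ionescu-Pausader. The phase $\Phi(\xi) = x\cdot\xi - \mu t|\xi|$ has angular critical points on each sphere $\{|\xi|=r\}$ at $\omega=\pm x/|x|$, and stationary phase on $S^1$ will supply the decisive $(r|x|)^{-1/2}\sim 2^{-(m+k)/2}$ gain that accounts for the $2^{-m/2}$ factor in both halves of the bound.

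Write $\xi = r\omega$ with $r=|\xi|$ and $\omega\in S^1$, let $\theta_0$ denote the angle of $x$, and set $\phi=\theta-\theta_0$, so that $\Phi = r|x|\cos\phi - \mu tr$. The integrand is supported in the super-localized radial window of length $\sim 2^{\tilde k}$ around $r=n\sim 2^k$. In the non-resonant regime $|x|\leq 2^{m-10}$ or $|x|\geq 2^{m+10}$, one has $|\nabla_\xi\Phi|=|x-\mu t\hat\xi|\gtrsim\max(|x|,t)\gtrsim 2^m$; iterated integration by parts in $\xi$ (with derivatives systematically split between $(m\psi_{\tilde k})\widehat f$ and $m\psi_{\tilde k}\nabla_\xi\widehat f$ so the two kinds of contributions are absorbed into the two halves of the bound) yields super-polynomial decay, and losses of $2^{-\tilde k}$ from $\nabla_\xi\psi_{\tilde k}(|\xi|-n)$ are avoided by integrating by parts in the angular direction. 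In the resonant regime $|x|\sim 2^m$, I decompose $S^1$ at angular scale $\Delta:=2^{-(m+k)/2}$ about $\phi_\pm\in\{0,\pi\}$; the far-from-critical piece is controlled by iterated $\phi$-IBP using $|\partial_\phi\Phi|=r|x||\sin\phi|\gtrsim(r|x|)^{1/2}$, while on each critical window 1D stationary phase in $\phi$ reduces the integral, modulo lower-order errors, to the radial expression
\[
I_\pm \approx \sqrt{2\pi/|x|}\,e^{\mp i\pi/4}\int_0^\infty e^{ir(\pm|x|-\mu t)}\,r^{1/2}\,m(r\omega_\pm)\widehat f(r\omega_\pm)\,\psi_{\tilde k}(r-n)\,dr,
\]
where $\omega_\pm=\pm x/|x|$. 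Bounding $I_\pm$ pointwise — amplitude $\sim 2^{(k-m)/2}$, radial interval length $\sim 2^{\tilde k}$, and $|\widehat f|$ dominated by the displayed $L^\infty$ norm — immediately yields the first, $L^\infty$-type, bound with coefficient $2^{-m/2+k/2+\tilde k}$.

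The $L^2$ bound is the crux and I expect it to be the main obstacle. The difficulty is that pointwise angular stationary phase produces an $L^2_r$ norm along the single ray $\omega_\pm$, whereas the hypothesis involves the full $L^2_\xi$ norm of $\nabla_\xi\widehat f$. I would bridge this by not taking pointwise stationary phase but instead retaining the angular window of size $\Delta$, and then applying 2D Cauchy-Schwarz on this window — whose area is $\sim\Delta\cdot 2^{\tilde k}\cdot 2^k\sim 2^{\tilde k+k/2-m/2}$ — to produce a genuine $L^2_\xi$ control. To replace $\widehat f$ by $\nabla_\xi\widehat f$, I split according to the size of the radial phase derivative $|\pm|x|-\mu t|$: above a threshold $\sim 2^{m/4-\tilde k/2}$, one radial IBP against $e^{ir(\pm|x|-\mu t)}$ gains a factor $|\pm|x|-\mu t|^{-1}$ and transfers $\partial_r$ either onto $\widehat f(r\omega_\pm)$ (controlled pointwise by $|\nabla_\xi\widehat f|$, since $\partial_r\widehat f(r\omega)=\omega\cdot\nabla_\xi\widehat f(r\omega)$) or onto the cutoff (costing $2^{-\tilde k}$); below the threshold, one uses direct Cauchy-Schwarz. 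Optimizing at this threshold gives exactly the claimed coefficient $2^{-m/2-m/4+3k/4+\tilde k/2}\|\nabla_\xi\widehat f\|_{L^2}$. Carefully tracking the cutoff-derivative losses $2^{-\tilde k}$ in the super-localized regime $\tilde k\ll 0$, and verifying that the angular averaging procedure genuinely converts the 1D radial norm into the 2D $L^2_\xi$ norm without unacceptable power loss, will be the most delicate bookkeeping in the argument.
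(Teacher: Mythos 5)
Your overall skeleton is the same as the paper's: split into $|x|\ll t$ versus $|x|\gtrsim t$, locate the angular critical direction $\mu x/|x|$, work at the angular scale $2^{-(m+k)/2}$, and get the $L^\infty$-type term from the measure of the critical sector. The genuine problem is in the part you yourself call the crux, the $\|\nabla_\xi\widehat f\|_{L^2}$ term, where your plan is misdirected. That term is not needed on the critical angular window at all (the plain volume bound there already gives $2^{-m/2+k/2+\tilde k}\|\widehat f\|_{L^\infty}$, with no stationary phase, no radial IBP, and no threshold in $\big||x|-t\big|$ -- note also that $\big||x|-t\big|$ is a fixed number for given $x,t$, so when it is small no radial gain exists and your "optimization at the threshold" has no mechanism behind it). Where the gradient term is genuinely needed is the \emph{off-critical} angular region, namely on the term produced when the single angular integration by parts lands the derivative on $\widehat f$. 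Your proposal handles that region by ``iterated $\phi$-IBP using $|\partial_\phi\Phi|\gtrsim (r|x|)^{1/2}$'', and this fails twice: you cannot iterate on the term carrying $\partial_\phi\widehat f$ (only one derivative of $\widehat f$ is available in the hypotheses), and if you replace the pointwise size $r|x||\sin\phi|$ of the angular phase derivative by its worst-case value $(r|x|)^{1/2}$, Cauchy--Schwarz over the far region of area $\sim 2^{k+\tilde k}$ gives only $2^{-m/2+k+\tilde k/2}\|\nabla_\xi\widehat f\|_{L^2}$, which misses the claimed coefficient $2^{-3m/4+3k/4+\tilde k/2}$ by a factor $2^{(m+k)/4}$. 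The repair is exactly the paper's device: decompose the angle dyadically, $|\sin\phi|\sim 2^l$ with $l\geq -(m+k)/2$, perform one angular IBP per sector and apply Cauchy--Schwarz on the sector of area $\sim 2^{k+\tilde k+l}$, then sum over $l$ (equivalently, keep the pointwise weight $(r|x||\sin\phi|)^{-1}$ inside the Cauchy--Schwarz rather than its uniform lower bound); this produces $2^{-3m/4+3k/4+\tilde k/2}\|\nabla_\xi\widehat f\|_{L^2}$ on the nose.

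A secondary inaccuracy: in the regime $|x|\leq 2^{m-10}$ the angular derivative of the phase is $|x\wedge\xi|$, which vanishes when $\hat\xi\parallel x$, so your claim that the $2^{-\tilde k}$ losses from $\nabla_\xi\psi_{\tilde k}(|\xi|-n)$ can be avoided by integrating by parts in the angular direction is not correct there; the phase gradient in that regime is essentially radial. This is harmless for the statement, because a single integration by parts suffices (as in the paper's first case), the cutoff term being kept as $2^{-m}2^{k}\|\widehat f\|_{L^\infty}$, but the ``super-polynomial decay without cutoff loss'' you assert is not available. Similarly, the full stationary-phase expansion on the critical window is more machinery than the estimate requires and creates the $L^2_r$-along-a-ray difficulty you then struggle to undo; the volume bound makes it unnecessary.
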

\begin{proof}

$\bullet$\qquad If   $|x|\leq 2^{m-10} $.

Note that, for this case, we have $\big|\nabla_\xi(x\cdot\xi-\mu t|\xi|)\big|  \sim t \sim 2^m. $  To exploit the high oscillation in $\xi$, we do integration by parts in $\xi$ once. As a result, we have
\be\label{2024feb8eqn42}
\begin{split}
&\big|\int_{\R^2}  e^{ i x\cdot \xi-i \mu t |\xi|  }     m(\xi) \widehat{f}(\xi)  \psi_{\tilde{k}}(|\xi|-n)  d \xi \big| \\
& \lesssim \big|\int_{\R^2}  e^{ i x\cdot \xi-i \mu t |\xi|  } \nabla_\xi\cdot\big[\frac{(x|\xi|- \mu t \xi )}{|x|\xi|- \mu t \xi |^2 }  |\xi|  m(\xi) \widehat{f}(\xi)  \psi_{\tilde{k}}(|\xi|-n)\big]   d \xi \big| \\ 
&\lesssim 2^{-m}  \|m(\xi)\|_{\mathcal{S}^\infty_k} \big[2^k \|\widehat{f}(\xi)\psi_{[\tilde{k}-1, \tilde{k}+1]}(|\xi|-n) \|_{L^\infty_\xi} + 2^{(k+\tilde{k})/2}\|\nabla_\xi \widehat{f}(\xi) \psi_{\tilde{k}}(|\xi|-n) \|_{L^2_\xi}  \big]. 
\end{split}
\ee

$\bullet$\qquad If   $|x|\geq 2^{m-10} $.

Note that $\xi\times \nabla_\xi(x\cdot\xi-\mu t|\xi|)=0$ if and only if \mbox{${\xi}/{|\xi|}  = \mu{x}/{|x|} := \xi_{0} $}.  For this case, we do dyadic decomposition for the angle between $\xi$ and $\xi_0$ with the threshold $\bar{l}:=-m/2-k/2.$  From the volume of support of $\xi$, the following estimate holds for the small angle case, 
 \be\label{july27eqn2}
 \begin{split}
 &\Big| 
\int_{\R^2}    e^{ i x\cdot \xi-i \mu t |\xi|  }  
 \widehat{f}(\xi) m(\xi)  \psi_{\tilde{k}}(|\xi|-n) \psi_{\leq  \bar{l} }(\frac{\xi}{|\xi|}-\xi_0)  d \xi \Big|\\ 
 &\lesssim \|m(\xi)\|_{\mathcal{S}^\infty_k}  2^{k+\tilde{k}+\bar{l}} \|\widehat{f}(\xi) \psi_{\tilde{k}}(|\xi|-n) \|_{L^\infty_\xi}  \\
 & \lesssim 2^{-m/2+k/2+\tilde{k}} \|m(\xi)\|_{\mathcal{S}^\infty_k}   \|\widehat{f}(\xi) \psi_{\tilde{k}}(|\xi|-n)\|_{L^\infty_\xi}.    \\ 
  \end{split}
\ee
For the large angle case, we first do dyadic decomposition for the size of angle between $\xi$ and $\xi_0$ and then   do integration by parts in $\xi$  once. As a result, we have 
 \be\label{2024feb8eqn21}
 \begin{split}
 &\Big| 
\int_{\R^2}    e^{ i x\cdot \xi-i \mu t |\xi|  }  
 \widehat{f}(\xi) m(\xi)  \psi_{\tilde{k}}(|\xi|-n) \psi_{> \bar{l} }(\frac{\xi}{|\xi|}-\xi_0)  d \xi \Big|\\
 &\lesssim \sum_{l>\bar{l}} \big|\int_{\R^2}  e^{ i x\cdot \xi-i \mu t |\xi|  }  (\xi_2 \p_{\xi_1}-\xi_1\p_{\xi_2})\big[\frac{ m(\xi) \widehat{f}(\xi)   \psi_{\tilde{k}}(|\xi|-n)}{\xi_2 x_1-\xi_1 x_2}   \varphi_{l;\bar{l}}(\frac{\xi}{|\xi|}-\xi_0)  \big]   d \xi \big| \\
 &\lesssim \sum_{l>\bar{l}}  2^{-m-l}\|m(\xi)\|_{\mathcal{S}^\infty_k} \big[  2^{-k-l}2^{k+\tilde{k}+l} \|\widehat{f}(\xi)\psi_{k}(\xi)\|_{L^\infty_\xi}  + 2^{(k+\tilde{k}+l)/2} \|\nabla_\xi \widehat{f}(\xi)\psi_{k}(\xi)\|_{L^2_\xi}  \big] \\
  &\lesssim 2^{-m/2 } \|m(\xi)\|_{\mathcal{S}^\infty_k}\big[ 2^{\tilde{k}+k/2}  \|\widehat{f}(\xi)\psi_{k}(\xi)\|_{L^\infty_\xi} + 2^{-m/4+3k/4+\tilde{k}/2}\|\nabla_\xi \widehat{f}(\xi)\psi_{k}(\xi)\|_{L^2_\xi}  \big] .    \\ 
  \end{split}
\ee
To sum up, our desired estimate \eqref{decayestimate} holds from the above estimate and the obtained estimate  \eqref{2024feb8eqn42}.
\end{proof}

 By using the above linear decay estimate of wave equation, in the following Lemma, we show that the nonlinear wave part decays sharply over time under the bootstrap assumption \eqref{bootstrap}. 
\begin{lemma}\label{decayphilemma}
Under the bootstrap assumption \eqref{bootstrap}, the following $L^\infty_x$-decay estimate holds for the wave part, 
\be\label{finalLinfyphi}
\sum_{k\in \Z} \langle t \rangle^{1/2} 2^{-k/2+ 8k_{+}}\|P_k U\|_{L^\infty_{x}} \lesssim \epsilon_1.
\ee
\end{lemma}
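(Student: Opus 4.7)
The plan is to use the half-wave representation $U(t)=e^{-it\d}V(t)$ with profile $V(t)=e^{it\d}U(t)$, apply Lemma \ref{decaylemma} dyadic piece by piece, and then sum against the $Z$-weight $2^{-k/2+8k_+}$. For $\langle t\rangle\lesssim 1$ the bound is immediate by Sobolev embedding from $\|U\|_{H^{N_0}}\lesssim \epsilon_1\langle t\rangle^\delta$.

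For $t\sim 2^m\gg 1$, applying Lemma \ref{decaylemma} with $\tilde{k}=k$ yields
\[
\|P_kU\|_{L^\infty_x}\lesssim 2^{-m/2}\bigl[2^{3k/2}\|\widehat{V}\psi_k\|_{L^\infty_\xi}+2^{-m/4+5k/4}\|\nabla_\xi\widehat{V}\psi_k\|_{L^2_\xi}\bigr].
\]
The $L^\infty_\xi$ piece is controlled directly by $Z_\phi$: since $\widehat{V}=e^{it|\xi|}\widehat{U}$ and $\widehat{U}=\widehat{\partial_t\phi}-i|\xi|\widehat{\phi}$, the bootstrap yields $\|\widehat{V}\psi_k\|_{L^\infty_\xi}\lesssim 2^{-k/3-10k_+}\epsilon_1$. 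For the weighted $L^2_\xi$ piece, I would decompose $\nabla_\xi\widehat{V}$ into polar components. The angular piece satisfies $\Omega_\xi\widehat{V}=\widehat{\Omega V}=\widehat{U^\Omega}$, since $\Omega$ commutes with $e^{it\d}$ and with $\partial_t-i\d$, hence $\|\Omega_\xi\widehat{V}\psi_k\|_{L^2_\xi}=\|P_kU^\Omega\|_{L^2}$. The radial piece is handled via the algebraic identity
\[
\xi\cdot\nabla_\xi\widehat{V}=-\widehat{e^{it\d}SU}-2\widehat{V}+t\,\widehat{e^{it\d}\mathcal{N}_\phi},
\]
derived from $SU=t\partial_tU+x\cdot\nabla U$, the relation $\partial_tU=-i\d U+\mathcal{N}_\phi$, and the Fourier rule $\widehat{x\cdot\nabla f}=-2\widehat{f}-\xi\cdot\nabla_\xi\widehat{f}$; the identity $SU=U^S-U$ then reduces this to $\|P_kU^S\|_{L^2}+\|P_kU\|_{L^2}+t\|P_k\mathcal{N}_\phi\|_{L^2}$.

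Summing over $k$ is handled by splitting into three regimes. For $k<-m$, Bernstein combined with $Z_\phi$ gives $\|P_kU\|_{L^\infty}\lesssim 2^{2k}\|\widehat{U}\psi_k\|_{L^\infty_\xi}\lesssim 2^{5k/3}\epsilon_1$, whose $Z$-weighted sum is $O(2^{-7m/6}\epsilon_1)\ll 2^{-m/2}\epsilon_1$. For $k$ larger than a small multiple of $m/N_0$, Bernstein combined with the Sobolev bootstrap $\|U\|_{H^{N_0}}\lesssim\epsilon_1\langle t\rangle^\delta$ yields $\|P_kU\|_{L^\infty}\lesssim 2^{k(1-N_0)}\epsilon_1\langle t\rangle^\delta$, whose weighted sum is a tiny negative power of $2^m$ since $N_0$ is huge. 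In the intermediate range, the leading term of the dispersive bound contributes $\sum_k 2^{-m/2}\cdot 2^{2k/3-2k_+}\epsilon_1=O(2^{-m/2}\epsilon_1)$, while the $\nabla_\xi$ term is absorbed using the low-frequency improvement $\|P_kU^\Gamma\|_{L^2}\lesssim 2^{\alpha k}\epsilon_1\langle t\rangle^{2\delta}$ with $\alpha=1/10$ from the bootstrap, which saves a factor $2^{\alpha k}$ for $k<0$ and, combined with the $2^{-m/4}$ gain in the dispersive estimate and the smallness of $\delta$, yields an $O(2^{-m/2}\epsilon_1)$ contribution. The main technical obstacle is handling the $t\|P_k\mathcal{N}_\phi\|_{L^2}$ piece from the radial-derivative identity without creating a $\langle t\rangle^\delta$ loss; this requires a careful bilinear estimate exploiting the null structure from subsection \ref{null} together with the $L^\infty$-decay \eqref{Linfu} of $u$, so that the nonlinear contribution is a genuine higher-order perturbation of the linear dispersive decay.
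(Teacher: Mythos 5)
Your overall architecture is the same as the paper's: restrict to $t\sim 2^m$ large, discard the very low and very high frequencies by Bernstein/energy, apply the decay estimate \eqref{decayestimate} of Lemma \ref{decaylemma} with $\tilde{k}=k$, control $\|\widehat{V}\psi_k\|_{L^\infty_\xi}$ by $Z_\phi$, and bound $\nabla_\xi\widehat{V}$ in $L^2$ by expressing it through $U^S$, $U^\Omega$, $U$ and $t\,e^{it\d}\mathcal{N}_\phi$; your polar/radial identity is exactly the paper's pointwise inequality preceding \eqref{2024feb16eqn31}, since $\p_t\widehat{V}=e^{it|\xi|}\widehat{\mathcal{N}_\phi}$, and your frequency-range bookkeeping and use of the $2^{-\alpha k}$ low-frequency improvement from \eqref{bootstrap} are consistent with the paper.

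The problem is that the one estimate on which the lemma actually turns --- the bound for the $t\|P_k\mathcal{N}_\phi\|_{L^2}$ contribution --- is left unproven, and the plan you state for it is misdirected. You assert it must be closed \emph{without} a $\langle t\rangle^\delta$ loss and that this requires a bilinear estimate exploiting the null structure of subsection \ref{null}. Neither is true: the null structure identified there lives in the velocity equation's nonlinearity $\widetilde{Q}(\phi,\phi)$, not in $\mathcal{N}_\phi$, and no null structure is used in this lemma. What makes the term harmless is simply that every piece of $\mathcal{N}_\phi$ carries a factor of $u$ (or $\p_t u$, controlled through the $u$-equation), so a crude $L^\infty_x\times L^2_x$ product estimate with the sharp decay \eqref{Linfu} and the energy bound $\|U\|_{H^{N_0}}\lesssim\langle t\rangle^{\delta}\epsilon_1$ gives $\|\p_t\widehat{V}(t,\xi)\psi_k(\xi)\|_{L^2}\lesssim\langle t\rangle^{-1+\delta}\epsilon_1^2$; after multiplication by $t|\xi|^{-1}$ this produces only a $\langle t\rangle^{\delta}$-type growth in $\|\nabla_\xi\widehat{V}\psi_k\|_{L^2}$ (this is \eqref{2024feb16eqn31}), which is perfectly acceptable because in \eqref{decayestimate} the $\nabla_\xi\widehat{f}$ term carries the extra factor $2^{-m/4}$ --- the very mechanism you already invoke for the $U^\Gamma$ pieces. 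So the loss you are trying to avoid does not need to be avoided, and insisting on a loss-free null-form estimate would send you into a much harder analysis than the lemma requires; as written, your argument is incomplete at precisely this decisive step.
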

\begin{proof}

Let $t\in [2^{m-1}, 2^m]\subset[0, T], m\gg1$. By using the volume of support of $\xi$, we first rule out the very low and relatively high frequency cases as follows,  
\[
\begin{split}
&\sum_{k\notin [-m/2, 2m/(N_0-20)] } 2^{-k/2+8k_+}\|\nabla_{t,x}P_k\phi\|_{L^\infty_{x}}\\
 &\lesssim  \sum_{k\notin [-m/2, 2m/(N_0-20)] } 2^{-k/2+3k_+}  \min\big\{2^{2k} \|\widehat{U}(t, \xi) \psi_k(\xi)\|_{L^\infty_{\xi}},  2^k \|\widehat{U}(t, \xi) \psi_k(\xi)\|_{L^2_{\xi}}  \big\} \\ 
& \lesssim \sum_{k\notin [-m/2, 2m/(N_0-20)]     } 2^{-k/2+8k_+}  \min\{2^{5k/3}, 2^{-(N_0-1)k+\delta m}\}\epsilon_1\lesssim 2^{-m/2}\epsilon_1. 
\end{split}
\]
Now, we focus on the case $k\in  [-m/2, 2m/(N_0-20)]\cap\Z$. Note that
\[
\big|\nabla_\xi \widehat{V}(t, \xi)\big|\lesssim t |\xi|^{-1} \big|\p_t\widehat{V}(t, \xi)\big| + |\xi|^{-1}\big[| \widehat{U^S}(t, \xi)|+| \widehat{U^\Omega}(t, \xi)|+| \widehat{U}(t, \xi)|\big].
\]
Recall \eqref{mainsysnew}. Note that, from \eqref{Linfu}, we have 
\[
\|\p_t\widehat{V}(t, \xi)\psi_k(\xi)\|_{L^2}\lesssim \|\nabla_{t,x}\phi\|_{H^3}\|u\|_{W^{3, \infty}}\lesssim \langle t \rangle^{-1+\delta}\epsilon_1^2.  
\]
Therefore, from the above two estimates and the bootstrap assumption \eqref{bootstrap}, we have
\be\label{2024feb16eqn31}
\sup_{k\in \Z} 2^{k-\alpha k_{-}} \big\|\nabla_\xi \widehat{V}(t, \xi)\psi_k(\xi)\big\|_{L^2}\lesssim \langle t \rangle^{\delta}\epsilon_1. 
\ee
For the rest of cases, we use the linear decay estimate \eqref{decayestimate} in Lemma \ref{decaylemma}. 
From  \eqref{2024feb16eqn31}, we have 
\be\label{2024feb16eqn1}
\begin{split}
&\| P_k U(t)\|_{L^\infty_x} \lesssim  2^{-m/2}\big(2^{k-10k_{+}} + 2^{-m/4+ (1/4+\alpha)k+\delta m} \big)\epsilon_1,\\
\Longrightarrow &\sum_{k\in [-m/2, 2m/(N_0-20)] } 2^{-k/2+8k_{+}}\|P_k U\|_{L^\infty_{x}} \lesssim   2^{-m/2}\epsilon_1. 
\end{split}
\ee
Hence finishing the proof of the desired estimate \eqref{finalLinfyphi}. 
\end{proof}

 \section{Energy estimate of the velocity part and the wave part }\label{energy}
 \subsection{Energy estimate of the velocity field}
\begin{lemma}\label{highenergyu}
Let $t\in [2^{m-1},2^m]\subset [0, T], m\in \Z_{+}$, s.t., $m\gg 1$. Under the bootstrap assumption \eqref{bootstrap}, the following improved energy estimate for the velocity field $u$ holds, 
\be\label{improvedenergyvel}
\| u(t)\|_{H^{N_0+1}}\lesssim 2^{-m/2+\delta m}\epsilon_0. 
\ee
\end{lemma}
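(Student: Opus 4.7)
The plan is to use the Duhamel formula $u(t)=e^{t\Delta}u_0+\int_0^t e^{(t-s)\Delta}\mathcal{N}_u(s)\,ds$ and work dyadically on the output frequency, exploiting the parabolic smoothing $\|P_k e^{(t-s)\Delta}\|_{L^2\to L^2}\lesssim e^{-c(t-s)2^{2k}}$ together with the extra gradient $\mathcal{N}_u=\d\widetilde{\mathcal{N}}_u$ from \eqref{2024feb27eqn41}. The linear piece $e^{t\Delta}u_0$ is handled by splitting frequencies at $k=-m/2$: for $k\leq -m/2$ the $L^1$-control in \eqref{initialdatacond} gives $\|e^{t\Delta}P_{\leq -m/2}u_0\|_{L^2}\lesssim t^{-1/2}\|u_0\|_{L^1}\lesssim t^{-1/2}\epsilon_0$, while for $k\geq -m/2$ the exponential heat dissipation absorbs the $H^{N_0+1}$ weight from the initial-data norm in \eqref{initialdatacond}.

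For the nonlinear contribution I split $\mathcal{N}_u=Q(u,u)+\widetilde{Q}(\phi,\phi)$. The heat-heat piece is controlled by $L^\infty_x\times L^2_x$: the sharp decay \eqref{Linfu} and the bootstrap \eqref{bootstrap} give $\|P_k\widetilde{\mathcal{N}}_u^{\textup{h-h}}(s)\|_{L^2}\lesssim 2^{-(N_0+1)k_+}\langle s\rangle^{-3/2+\delta}\epsilon_1^2$; splitting $s\in[0,t/2]\cup[t/2,t]$ and using $\int_{t/2}^t e^{-(t-s)2^{2k}}\,ds\lesssim \min\{t,2^{-2k}\}$ together with the extra $2^k$ from $\d$ produces a contribution bounded by $\langle t\rangle^{-1/2+\delta}\epsilon_1^2\ll \langle t\rangle^{-1/2+\delta}\epsilon_0$, since $\epsilon_1^2=\epsilon_0^{3/2}$. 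The main obstacle is the wave-wave piece $\widetilde{Q}(\phi,\phi)$: the naive $L^\infty\times L^2$ bound only gives $\|\widetilde{Q}(\phi,\phi)(s)\|_{L^2}\lesssim \langle s\rangle^{-1/2+\delta}\epsilon_1^2$, which is not integrable against the heat kernel in 2D. To gain decay I would employ the normal form \eqref{norform}--\eqref{2024feb18eqn1}, whose symbol is rendered non-singular precisely by the w$\times$w$\to$h null structure in \eqref{nullstrucdec}; equivalently, substituting $(\p_t+i\mu\d)U^\mu=\mathcal{N}_\phi^\mu$ and integrating by parts in $s$ (cf.\ \eqref{normalform}) yields
\[
\int_0^t e^{(t-s)\Delta}\widetilde{Q}(\phi,\phi)(s)\,ds = e^{t\Delta}B(0) - B(t) + \int_0^t e^{(t-s)\Delta}\mathcal{N}_B(s)\,ds,
\]
with $B(t)=\sum_{\mu,\nu}A_{\mu,\nu}(U^\mu,U^\nu)(t)$ and $\mathcal{N}_B$ cubic in $(U,\mathcal{N}_\phi)$. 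The boundary term $e^{t\Delta}B(0)$ is handled using $\|B(0)\|_{L^1\cap H^{N_0+1}}\lesssim \|U_0\|_{L^2\cap H^{N_0}}^2\lesssim\epsilon_0^2$ (via $\|fg\|_{L^1}\leq \|f\|_{L^2}\|g\|_{L^2}$ and \eqref{symbolnormest}), while the cubic remainder is absorbed by the sharp pointwise decay \eqref{finalLinfyphi} of $U$ together with the smallness $\epsilon_1^3\ll\epsilon_0\langle t\rangle^{-1/2+\delta}$.

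The crux is then the bilinear estimate $\|B(t)\|_{H^{N_0+1}}\lesssim \langle t\rangle^{-1/2+\delta}\epsilon_1^2$, for which the symbol bound \eqref{symbolnormest} reduces matters to dyadic bilinear estimates in the three regimes $\chi_k^1,\chi_k^2,\chi_k^3$. The High$\times$Low and Low$\times$High regimes are handled by $L^\infty_x\times L^2_x$ using the sharp pointwise wave decay \eqref{finalLinfyphi} from Lemma \ref{decayphilemma} on the low-frequency factor and the bootstrap $H^{N_0}$ norm of $U$ on the high-frequency factor, the loss $2^{(N_0+1)k_+-N_0 k_{1,+}}=2^{k_+}$ being absorbed by the symbol factor $2^{-k_+}$ in \eqref{symbolnormest}. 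The hardest regime, which I expect to be the main obstacle, is the High$\times$High$\to$Low case $\chi_k^1$: here the common high frequency $k_1\sim k_2\geq k$ must be summed, and a bare $L^\infty\times L^2$ bound is borderline logarithmically divergent. This will be resolved by invoking the super-localized $L^\infty$ decay of Lemma \ref{decaylemma} to replace the lossy dyadic $L^\infty$ bound by a sharp annular one, consistent with the $2^{-k_{1,-}}$ symbol behavior in \eqref{symbolnormest}, thereby making the sum in $k_1$ convergent. Putting everything together upgrades the bootstrap constant from $\epsilon_1$ to $\epsilon_0$, establishing \eqref{improvedenergyvel}.
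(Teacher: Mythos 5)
Your proposal is essentially the paper's own argument: writing the wave--wave source via the normal form \eqref{norform}--\eqref{2024feb18eqn1} (your integration by parts in $s$ inside Duhamel is literally the same computation as the paper's substitution $v=u+\sum A_{\mu,\nu}(U^\mu,U^\nu)$ and the heat equation \eqref{normalform} for $v$), treating $Q(u,u)$ by the decay \eqref{Linfu}, bounding the quadratic boundary term $A_{\mu,\nu}(U^\mu,U^\nu)(t)$ dyadically with \eqref{symbolnormest}, the sharp wave decay \eqref{finalLinfyphi} in the Low$\times$High/High$\times$Low regimes and the super-localized decay of Lemma \ref{decaylemma} in the High$\times$High regime, and absorbing the cubic remainder through bounds on $\mathcal{N}_\phi$ and the heat kernel, exactly as in \eqref{2024feb19eqn21}--\eqref{2024feb19eqn52}. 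The only caveat is cosmetic: for the boundary term $B(0)$ in the High$\times$High, very-low-frequency range the symbol grows like $2^{-k_{1,-}}$, so its $L^1$ bound needs the $L^1_x$ (i.e.\ $\|\widehat{U}_0\|_{L^\infty_\xi}$) control from \eqref{initialdatacond} rather than just $\|U_0\|_{L^2}^2$, a detail available from the data assumptions.
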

\begin{proof}

 Recall the normal form transformation we did in \eqref{normalform}. We first estimate the energy of the normal form part. Thanks to the ``null structure for w$\times$w$\rightarrow$h'', the symbol $a_{\mu, \nu}(\xi-\eta, \eta)$ of normal form transformation is non-singular.  More precisely,   $\forall k, k_1, k_2\in \Z,$  we have
\be\label{symbolnormest}
\begin{split}
    \|a_{\mu, \nu}(\xi-\eta) \|_{\mathcal{S}^\infty_{k,k_1,k_2 }} +\| \frac{  q_{null}^{wwh}(  \xi-\eta,\eta)}{  -|\xi|^2+i(\mu|\xi-\eta|+ \nu|\eta|) }  \|_{\mathcal{S}^\infty_{k,k_1,k_2 }} \lesssim \left\{\begin{array}{ll}
   2^{-k_{+}} & \textup{if\,}(k_1, k_2)\in \chi_k^2\cup \chi_k^3,\\
   2^{-k_{1,-}} & \textup{if\,}(k_1, k_2)\in \chi_k^1,  \\
   \end{array}\right. \\
  \end{split}
\ee
where the sets of parameters $\chi_k^i, i\in\{1,2,3\}$, are defined in \eqref{2024march17eqn1}.    The validity of the above estimate follows directly from computing the norms defined in \eqref{symbolclass} for the symbol $a_{\mu, \nu}(\xi-\eta)$ ( see \eqref{2024feb18eqn1} and  \eqref{symbolphi}) and  the symbol $q_{null}^{wwh}(  \xi-\eta,\eta)$ (see \eqref{nullstrucdec}).

  For any $\mu, \nu\in\{+,-\},$ from the estimate of symbol in   \eqref{symbolnormest} and the $L^2-L^\infty$-type bilinear estimate, we have 
 \be\label{2024feb19eqn21}
 \begin{split}
&\sum_{\begin{subarray}{c}
|\alpha|\leq N_0+1\\
\end{subarray}} \|\nabla_x^\alpha \big(A_{\mu, \nu}(U^{\mu}, U^{\nu})\big)\|_{L^2} \lesssim \sum_{\begin{subarray}{c}
 |k_1-k_2|\leq 10\\
   k_1\geq k+20, |\alpha|\leq N_0+1\\
 \end{subarray}} 2^{ |\alpha|k-k_{1,-}}  \| P_k\big(A_{\mu, \nu}(P_{k_1}(U^{\mu}), P_{k_2}(U^{\mu})) \big)\|_{L^2} \\ 
&\quad  + \big(\sum_{k\in \Z, |\alpha|\leq N_0+1} 2^{2|\alpha|k-2k_+} \| P_k (U)\|_{L^2}^2 \| U\|_{L^\infty_x}^2  \big)^{1/2} \lesssim \big[\sum_{\begin{subarray}{c}
 |k_1-k_2|\leq 10\\
   k_1\geq k+20\\
 \end{subarray}}   \| P_k\big(A_{\mu, \nu}(P_{k_1}(U^{\mu}), P_{k_2}(U^{\mu})) \big)\|_{L^2}\big]\\ 
 &\quad +\big[ \sum_{\begin{subarray}{c}
 k_1\geq k+20,  k\in\Z\\ 
 1\leq |\alpha|\leq N_0+1\\ 
 \end{subarray} } 2^{ |\alpha|k-(N_0+2)k_{1,+}} \| U\|_{H^{N_0}}\|U\|_{W^{3, \infty}} \big]+     \| U\|_{H^{N_0}}\|U\|_{W^{1, \infty}}  \\ 
&\lesssim \sum_{\begin{subarray}{c}
 |k_1-k_2|\leq 10\\
   k_1\geq k+20\\
 \end{subarray}}   \| P_k\big(A_{\mu, \nu}(P_{k_1}(U^{\mu}), P_{k_2}(U^{\mu})) \big)\|_{L^2}  +  \| U\|_{H^{N_0}}\|U\|_{W^{3, \infty}}. 
 \end{split}
 \ee

 In the above estimate we single out the High $\times$ High type interaction  because of two reasons. Firstly,  the High $\times$ Low and Low $\times$ High type interactions are standard. Lastly and most importantly, the High $\times$ High type interaction is non-trivial in the sense that there is summability issue  with respect to the output frequency $k$ if without paying any price of decay rate. Note that this case only happens when $\alpha=0$, which explains why only $L^2$-norm appears in \eqref{2024feb19eqn21}.  

 To get around this summability issue, we first do super localization for two inputs and then use the super localized decay estimate \eqref{decayestimate} in Lemma \ref{decaylemma} and the orthogonality in $L^2$. Let $P_{k_1;k,n}$ denotes the Fourier multiplier operator with Fourier symbol $\psi_{k_1}(\xi)\psi_{k}(|\xi| - 2^k n )$. From  the estimate \eqref{2024feb16eqn31} and    the estimate of symbol in   \eqref{symbolnormest},   we have 
 \be\label{2024feb19eqn41}
 \begin{split}
&\| P_k\big(A_{\mu, \nu}(P_{k_1}(U^{\mu}), P_{k_2}(U^{\nu})) \big)\|_{L^2}   \lesssim \sum_{\begin{subarray}{c}
 n, m\in [2^{k_1-k-10}, 2^{k_1-k+10}]\cap \Z\\ 
 |n-m|\leq 2^{10}\\
 \end{subarray} } \| P_k\big(A_{\mu, \nu}(P_{k_1;k,n}(U^{\mu}), P_{k_2;k,m}(U^{\nu})) \big)\|_{L^2}\\
 &\lesssim \sum_{\begin{subarray}{c}
 n, m\in [2^{k_1-k-5}, 2^{k_1-k+5}]\cap \Z\\ 
 |n-m|\leq 2^{10}\\
 \end{subarray} } 2^{-m/2-k_{1,-}}\big[ 2^{k-k_{2,+}}\epsilon_1 + 2^{-m/4+3k_1/4+k/2} \|\nabla_\xi \widehat{V}(\xi) \psi_{k_2}(\xi)\psi_{k}(|\xi| - 2^k n ) \|_{L^2} \big] \\
 &\times \|P_{k_1;k,n}(U^{\mu})\|_{L^2} \lesssim 2^{-m/2-k_{1,-}}\min\{2^{-N_0 k_{1,+} +\delta m  }, 2^{k_1} \}(2^{k/2+k_1/2}+2^{-m/4+k/4+\delta m })\epsilon_1^2.
 \end{split}
 \ee
  After combining the obtained estimates \eqref{2024feb19eqn21}  and \eqref{2024feb19eqn41}, we have
  \be\label{2024feb19eqn42}
  \sum_{\begin{subarray}{c}
|\alpha|\leq N_0+1\\
\end{subarray}} \|\nabla_x^\alpha \big(A_{\mu, \nu}(U^{\mu}, U^{\nu})\big)\|_{L^2}\lesssim 2^{-m/2 +\delta m }\epsilon_1^2. 
  \ee

Now, we focus on the estimate of $H^{N_0}$-norm of $v$. Recall the equation satisfied by ``$v$'' in  \eqref{normalform}. Note that the following Duhamel's formula holds, 
 \be\label{duh}
 v(t) = e^{t \Delta} v(0) + \int_{0}^t e^{(t-s)\Delta}\mathcal{N}_v(s) d s. 
 \ee
 Note that, for any fixed $k\in \Z$, the following estimate holds for the frequency localized heat kernel,
 \be\label{heatkernelest}
 \big|\int_{\R^3} e^{i y\cdot \xi -(t-s)|\xi|^2} \psi_k(\xi) dy \big|\lesssim 2^{3k}(1+2^{2k}|t-s|)^{-100} (1+2^k|y|)^{-100}.
 \ee
 By using  the precise form of the heat kernel and H\"older inequality, we have
 \be\label{2024feb19eqn46}
 \begin{split}
\| v(t)\|_{H^{N_0+1}} & \lesssim 2^{-m/2}\epsilon_0 +\sum_{k\in \Z} \int_{0}^{t} (1+2^{2k}|t-s|)^{-10}\big[2^k \|P_k\mathcal{N}_v(s)\|_{L^1_x} +2^{(N_0+1)k }  \|P_k \mathcal{N}_v(s)\|_{L^2}\big]d s.  \\ 
 \end{split}
 \ee

Recall \eqref{normalform} and \eqref{velocityeqn}. We first estimate the $H^{N_0-1/2}_x$-norm of $\mathcal{N}_v(s)$.  From the estimate of symbol in \eqref{symbolnormest},  the  $L^2-L^\infty$-type bilinear estimate  and the decay estimates in  \eqref{Linfu} and \eqref{finalLinfyphi}, we have 
 \be\label{2024feb19eqn48}
 \begin{split}
 \|\mathcal{N}_v(s)\|_{H^{N_0-1/2}_x}&\lesssim \langle s\rangle^{-3/2+2\delta  }\epsilon_1^2+ \epsilon_1 \langle s\rangle^{-1/2}  \|\mathcal{N}_\phi(t)\|_{H^{N_0-3/2}_x} + \epsilon_1 \langle s\rangle^{\delta  }\|\mathcal{N}_\phi(t)\|_{L^\infty_x}\\
 & \lesssim \langle s\rangle^{-3/2+2\delta }\epsilon_1^2. \\
 \end{split}
 \ee

Recall \eqref{mainsysnew} and \eqref{2024feb27eqn41}. Note that $\mathcal{N}_\phi$   has one space derivative outside, which contributes the smallness of output frequency.  More precisely, we decompose $\mathcal{N}_\phi(s)$ as follows, 
\be\label{2024feb27eqn55}
\mathcal{N}_\phi(s)= -2\p_i(u_i\p_t \phi) - \p_i(\Delta u_i  \phi)- \p_i(u_i u_j \p_j \phi)  -  \p_i( \d \widetilde{\mathcal{N}}_u(s)  \phi(s))
\ee
Note that, after using the $L^2-L^\infty$-type bilinear estimate and using $Z_\phi(t)$-norm to control $\phi$,  we have
\be\label{2024feb27eqn51}
\begin{split}
\sum_{(k_1,k_2)\in \chi_k^2\cup \chi_k^3}& \| P_k(P_{k_1}(u_i) P_{k_2}(\p_t \phi))\|_{L^2_x} + \| P_k(P_{k_1}(\Delta u_i) P_{k_2}( \phi))\|_{L^2_x}  \\ 
& + \| P_k(P_{k_1}( \d \widetilde{\mathcal{N}}_u(s)  ) P_{k_2}( \phi))\|_{L^2_x} \lesssim 2^{k-3k_{+}}\langle s \rangle^{-1}\epsilon_1^2.
\end{split}
\ee
For the High $\times$ High type interaction, we use the  super-localized decay estimate for $\phi$-part. After employing the same strategy used in \eqref{2024feb19eqn41}, we have 
\be\label{2024feb27eqn52}
\begin{split}
\sum_{(k_1,k_2)\in \chi_k^1}& \| P_k(P_{k_1}(u_i) P_{k_2}(\p_t \phi))\|_{L^2_x} + \| P_k(P_{k_1}(\Delta u_i) P_{k_2}( \phi))\|_{L^2_x}  \\ 
& + \| P_k(P_{k_1}( \d \widetilde{\mathcal{N}}_u(s)  ) P_{k_2}( \phi))\|_{L^2_x} \lesssim 2^{k/4-3k_{+}}\langle s \rangle^{-1}\epsilon_1^2.
\end{split}
\ee
Lastly, from $L^2\rightarrow L^1$-type Sobolev embedding, we have
\be\label{2024feb27eqn53}
\|P_k(u_i u_j \p_j \phi)\|_{L^2_x}\lesssim 2^k\|P_k(u_i u_j \p_j \phi)\|_{L^1_x}\lesssim   2^{k-3k_{+}} \|u\|_{H^3}^2 \|U\|_{W^{3,\infty}_x}\lesssim 2^{k}\langle s \rangle^{-5/4}\epsilon_1^3.
\ee
To sum up, after combining   \eqref{2024feb27eqn55}--\eqref{2024feb27eqn53}, we have
\be\label{localizedphi}
\| P_k\big(   \mathcal{N}_\phi(s)\big)\|_{L^2_x}\lesssim 2^{5k/4-3k_{+}}\langle s \rangle^{-1 } \epsilon_1. 
\ee

For any fixed $k\in \Z$,  from  the obtained estimate \eqref{2024feb19eqn47},   $L^2-L^2$-type bilinear estimate,  the following estimate holds if we put $u$ in $L^2$ and $\phi$-part in $L^\infty_x$ for the $L^2_x$-estimate  of $\mathcal{N}_\phi$, 
 \be\label{2024feb19eqn47}
 \begin{split}
 &\|P_k \mathcal{N}_v(s)\|_{L^1}\lesssim \|u(s) \|_{L^2}\|\nabla u(s) \|_{L^2} +  \sum_{  (k_1,k_2)\in \cup_{i=1,2,3}\chi_k^i}     \| P_{k_2} U\|_{L^2} \| P_{k_1}\mathcal{N}_\phi\|_{L^2} \lesssim \langle s \rangle^{-1}\epsilon_1^2. 
 \end{split}
 \ee

 Moreover, 
After combining the above obtained   estimates \eqref{2024feb19eqn42}, \eqref{2024feb19eqn46}, \eqref{2024feb19eqn48}, and \eqref{2024feb19eqn47}, we have
\be\label{2024feb19eqn52}
\begin{split}
\| v(t)\|_{H^{N_0+1}} +\| u(t)\|_{H^{N_0+1}} &\lesssim 2^{-m/2+\delta m}\epsilon_0+\int_0^t (t-s)^{-1/2}\langle s \rangle^{-1}\epsilon_1^2+ (t-s)^{-3/4}\langle s \rangle^{-3/2+2\delta}\epsilon_1^2  d s \\ 
&  \lesssim 2^{-m/2+\delta m}\epsilon_0.   \\
\end{split}
\ee
Hence finishing the proof  of our desired estimate \eqref{improvedenergyvel}. 
\end{proof}

In the following Lemma, we control the energy of velocity with vector fields. 
\begin{lemma}\label{energyvelovec}
Let $t\in [2^{m-1},2^m]\subset [0, T], m\in \Z_{+}$, s.t., $m\gg 1$. Under the bootstrap assumption, the following improved energy estimate for the velocity field $u$ holds, 
\be\label{improvedenergyvelvec}
\sum_{\Gamma\in \{S, \Omega\}}\| \Gamma u(t)\|_{H^1}\lesssim 2^{-m/2+2\delta m}\epsilon_0. 
\ee
\end{lemma}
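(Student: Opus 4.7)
The plan is to mirror the strategy of Lemma \ref{highenergyu} with a vectored normal-form variable. Writing
$$\Gamma u = \Gamma v - \Gamma\Bigl(\sum_{\mu,\nu \in \{+,-\}} A_{\mu,\nu}(U^\mu, U^\nu)\Bigr),\qquad \Gamma \in \{S, \Omega\},$$
I would bound the two terms on the right separately. Since the target regularity here is only $H^1$ (versus $H^{N_0+1}$ in Lemma \ref{highenergyu}), rough bookkeeping in frequency suffices; in particular the $2\delta$ budget gives us a full $\langle t\rangle^\delta$ of slack over the unvectored estimate.

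For the bilinear correction, Leibniz on the Fourier side distributes $\Gamma$ across $a_{\mu,\nu}(\xi-\eta,\eta)$ and the two inputs. A direct computation (using $\widehat{x\cdot\nabla}=-\xi\cdot\nabla_\xi-2$, $[S, |\nabla|] = -|\nabla|$, hence $SU = U^S - U$, and $\Omega U = U^\Omega$) shows that every resulting piece has the schematic form $B(U^\Gamma, U)$ or $B(U,U^\Gamma)$, with $B$ a bilinear operator whose symbol still obeys \eqref{symbolnormest}. An $L^2_x \times L^\infty_x$ bilinear estimate, combining $\|U^\Gamma\|_{L^2}\lesssim \langle t\rangle^{2\delta}\epsilon_1$ from the bootstrap with the sharp $L^\infty$ decay $\|P_k U\|_{L^\infty_x}\lesssim \langle t\rangle^{-1/2} 2^{k/2 - 8k_+}\epsilon_1$ of Lemma \ref{decayphilemma}, then bounds each such contribution by $\langle t\rangle^{-1/2+2\delta}\epsilon_1^2$ in $H^1$. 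Summability in the High$\times$High$\to$Low regime is restored by the super-localized decay estimate of Lemma \ref{decaylemma}, exactly as in \eqref{2024feb19eqn41}.

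For $\Gamma v$, I would derive its evolution by commuting $\Gamma$ through $(\p_t-\Delta) v = \mathcal{N}_v$. The rotation case is clean: $(\p_t-\Delta)\Omega v = \Omega \mathcal{N}_v$. For the scaling, $[S, \p_t-\Delta] = -\p_t + 2\Delta$, and substituting $\p_t v = \Delta v + \mathcal{N}_v$ yields
$$(\p_t - \Delta) S v = S\mathcal{N}_v + \mathcal{N}_v - \Delta v.$$
Duhamel's formula then represents $\Gamma v(t)$ as a heat-kernel integral of these sources. The nonlinear piece $\Gamma \mathcal{N}_v$ is handled by distributing $\Gamma$ across the bilinear building blocks of $\mathcal{N}_v$ (i.e., $Q(u,u)$ and the $A_{\mu,\nu}(\mathcal{N}_\phi^\mu, U^\nu)$ pieces): each resulting term is either linear in $\Gamma u$ paired with the $\langle t\rangle^{-1}$-decaying $u\in W^{3,\infty}$, or linear in $U^\Gamma$ or $\Gamma \mathcal{N}_\phi$ paired with sharply decaying factors. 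The analogue of \eqref{localizedphi} for $\Gamma\mathcal{N}_\phi$, obtained by the same derivation since $\Gamma$ distributes cleanly over its quadratic structure, retains the favorable low-frequency factor $2^{5k/4}$, so the heat-kernel smoothing \eqref{heatkernelest} produces a time-integrable source yielding $\langle t\rangle^{-1/2+2\delta}\epsilon_0$.

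The main obstacle is the linear-in-$v$ commutator source $-\int_0^t e^{(t-s)\Delta}\Delta v(s)\,ds$ that appears only for $\Gamma = S$. Frequency by frequency,
$$\Bigl\|P_k\int_0^t e^{(t-s)\Delta}\Delta v(s)\, ds\Bigr\|_{L^2}\lesssim \int_0^t 2^{2k} e^{-c 2^{2k}(t-s)} \|P_k v(s)\|_{L^2}\, ds,$$
so the heat weight concentrates the integrand in a window of width $2^{-2k}$ near $s = t$. Combined with the improved estimate $\|v(s)\|_{H^{N_0+1}}\lesssim \langle s\rangle^{-1/2+\delta}\epsilon_0$ from Lemma \ref{highenergyu}, this contributes $\langle t\rangle^{-1/2+\delta}\epsilon_0$ at the $H^1$ level, comfortably inside the $\langle t\rangle^{-1/2+2\delta}\epsilon_0$ budget; dyadic summation over $k$ is routine given the exponential factor in $2^{2k}(t-s)$, and the bootstrap closes.
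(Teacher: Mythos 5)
Your overall architecture is the same as the paper's: split $\Gamma u$ into $\Gamma v$ and the normal-form correction, bound $\Gamma(A_{\mu,\nu}(U^\mu,U^\nu))$ by putting $U^\Gamma$ in $L^2$ and the other factor in $L^\infty$ (with Lemma \ref{decaylemma} restoring High$\times$High summability), compute $[\p_t-\Delta,S]$ exactly as in the paper to get the extra sources $\mathcal{N}_v-\Delta v$, and absorb $-\int_0^t e^{(t-s)\Delta}\Delta v\,ds$ using the improved $v$-bound from the proof of Lemma \ref{highenergyu} (your frequency-localized version of this step is a harmless variant of the paper's splitting at $s=t-1$).

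The genuine gap is your claim that the analogue of \eqref{localizedphi} for $\Gamma\mathcal{N}_\phi$ ``retains the favorable low-frequency factor $2^{5k/4}$'' and is ``obtained by the same derivation since $\Gamma$ distributes cleanly over its quadratic structure.'' Distributing $\Gamma$ is the easy part; the problem is that every resulting term now carries a factor $\Gamma u$ or $U^\Gamma$, and the derivation of \eqref{localizedphi} is built precisely on information that the bootstrap \eqref{bootstrap} does \emph{not} provide for these factors: the Low$\times$High/High$\times$Low step uses the pointwise decay \eqref{Linfu} of $u$ and the $Z_\phi$ ($L^\infty_\xi$) control of $\widehat U$, and the High$\times$High step \eqref{2024feb27eqn52} uses the super-localized decay estimate of Lemma \ref{decaylemma}, which requires the weighted bound \eqref{2024feb16eqn31} on $\nabla_\xi\widehat V$ — for $U^\Gamma$ that would amount to controlling two vector fields, and for $\Gamma u$ there is no $W^{3,\infty}$ decay, only $\|\Gamma u\|_{H^1}\lesssim \langle s\rangle^{-1/2+2\delta}\epsilon_1$ and $\|U^\Gamma\|_{L^2}\lesssim\langle s\rangle^{2\delta}\epsilon_1$. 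Consequently the strong bound $2^{5k/4}\langle s\rangle^{-1}$ is not reachable this way; the paper only proves $\|P_k(\Gamma\mathcal{N}_\phi(s))\|_{L^2}\lesssim 2^{k/2+k_+/2}\langle s\rangle^{-1+2\delta}\epsilon_1^2$, established by pairing the vector-field factor in $L^2$ with decaying factors and Sobolev embedding, and then closes via \eqref{2024feb27eqn31}--\eqref{2024feb27eqn11} and $\int_0^t(t-s)^{-1/2}\langle s\rangle^{-1+2\delta}\,ds\lesssim 2^{-m/2+2\delta m}$. So your conclusion survives, but not by the route you state: you must re-derive the $\Gamma\mathcal{N}_\phi$ bound (and the bound for $A_{\mu,\nu}((\mathcal{N}_\phi)^\mu,(U^\Gamma)^\nu)$) using only the $L^2$, slowly growing control of $U^\Gamma$ and $\Gamma u$, accepting the weaker frequency factor and the $\langle s\rangle^{-1+2\delta}$ decay, which is exactly where the $2\delta$ (rather than $\delta$) loss in \eqref{improvedenergyvelvec} comes from.
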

\begin{proof}
 Recall the normal form we did in  \eqref{norform}.  Again, we first estimate the energy of the normal form part.  Similar to the obtained estimate \eqref{2024feb19eqn41}, for any $\mu,\nu\in\{+, -\}, \Gamma\in \{S, \Omega\},$ after putting $U^\Gamma$ in $L^2$ and the other piece in $L^\infty$, from the sharp decay estimate in \eqref{finalLinfyphi} and the estimate of symbol in \eqref{symbolnormest}, we have
\[
  \|\Gamma \big(A_{\mu, \nu}(U^{\mu}(t), U^{\nu}(t))\big)\|_{H^1}\lesssim 2^{-m/2+2\delta m }\epsilon_1^2. 
\]

Now, we focus on the estimate of $v$-part. Note that, $S$ doesn't commute with the heat equation. As a result of direct computation,  we have 
 \[
 [\p_t -\Delta, \Omega]=0, \quad  [\p_t -\Delta, S]= [\p_t-\Delta, t\p_t+x\cdot \nabla_x] = \p_t-2\Delta 
 \]
 Therefore, 
 \[
 (\p_t -\Delta) S v  = S\mathcal{N}_v  + \mathcal{N}_v-\Delta v, \quad  (\p_t -\Delta) \Omega v = \Omega \mathcal{N}_v. 
 \]
 We rewrite the above two equations uniformly as follows,
 \[
 (\p_t -\Delta) \Gamma v  = \Gamma\mathcal{N}_v +c^1_{\Gamma}\mathcal{N}_v +c^2_{\Gamma}\Delta v, \quad c_S^1=1, c_S^2=-1, c_\Omega^1=c_{\Omega}^2=0.
 \]
 We also have the following Duhamel's formula for $\Gamma v(t)$, 
 \be\label{duhamelvelvec}
 \Gamma v(t)= e^{t \Delta} \Gamma v(0) + \int_0^t e^{(t-s)\Delta}\big(  \Gamma\mathcal{N}_v +c^1_{\Gamma}\mathcal{N}_v +c^2_{\Gamma}\Delta v \big) ds. 
 \ee

From the explicit formula of heat kernel and the obtained estimate \eqref{2024feb19eqn47}, we have
\be\label{2024feb20eqn1}
\begin{split}
\|\int_{0}^t e^{(t-s)\Delta} \mathcal{N}_v d s \|_{H^1_x}&\lesssim \sum_{k\in \Z} \int_0^t  (1+2^{2k}|t-s|)^{-10}  2^k\big(\|P_k\mathcal{N}_v(s)\|_{L^1_x}+\|P_k\mathcal{N}_v(s)\|_{L^2_x}\big)  ds \\ 
& \lesssim   \int_0^t (t-s)^{-1/2}  \langle s\rangle^{-1  } \epsilon_1^2   ds \lesssim 2^{-m/2+0.1\delta m }\epsilon_0.  \\ 
\end{split}
\ee
Again, from  the explicit formula of heat kernel, the following estimate holds from the obtained $H^{N_0}$-norm estimate of $v$ in  \eqref{2024feb19eqn52},  
\be\label{2024feb27eqn71}
\|\int_{0}^t e^{(t-s)\Delta} \Delta  v ds  \|_{H^1_x}\lesssim \int_0^{t-1} (t-s)^{-1}\| v (s)\|_{H^1_x} ds + \int_{t-1}^t \| v (s)\|_{H^3_x} ds \lesssim  2^{-m/2+1.1\delta m }\epsilon_0.   
\ee

Now, we focus on the  contribution of the main term $ \Gamma\mathcal{N}_v$.  Recall \eqref{normalform} and \eqref{velocityeqn}. Since $Q(u,u)$ has one space derivative outside, which contributes the smallness of output frequency, from $L^2-L^\infty$-type bilinear estimate, we have   
\[
 \|P_k\big(\Gamma \big(Q(u(s),u(s))\big)\big)\|_{L^2_x}\lesssim 2^{k-2k_{+}}\langle s \rangle^{-3/2+3\delta}\epsilon_1^2. 
\]
After doing dyadic decomposition for the output frequency and using  the pointwise estimate of the heat kernel,  the following estimate holds from the above obtained estimate, 
\be\label{2024feb21eqn1}
\begin{split}
\|\int_0^t e^{(t-s)\Delta} \big(\Gamma \big(Q(u(s),u(s))\big) d s \|_{H^1_x} &\lesssim \sum_{k\in \Z} \int_0^t (1+2^{2k}|t-s|)^{-10} 2^{k}\langle s \rangle^{-3/2+3\delta}\epsilon_1^2 ds \\
&\lesssim \int_0^t (t-s)^{-1/2} \langle s \rangle^{-3/2+3\delta}\epsilon_0 ds \lesssim 2^{-m/2}\epsilon_0. 
\end{split} 
\ee

Now, we consider the case   $\Gamma$ hits $ A_{\mu, \nu}(\big(\mathcal{N}_\phi \big)^{\mu}, U^{\nu})$ and $ A_{\mu, \nu}((U)^{\mu}, (\mathcal{N}_\phi)^{\nu})$. Due to symmetry, it would be sufficient to consider the case   $\Gamma$ hits $ A_{\mu, \nu}(\big(\mathcal{N}_\phi \big)^{\mu}, U^{\nu})$.   As a result of computation, we have
\be\label{2024feb27eqn61}
\begin{split}
\Gamma\big(A_{\mu, \nu}(\big(\mathcal{N}_\phi \big)^{\mu}, U^{\nu}) \big)&= A_{\mu, \nu}(\big( \Gamma \mathcal{N}_\phi \big)^{\mu}, U^{\nu})+ A_{\mu, \nu}(\big(\mathcal{N}_\phi \big)^{\mu}, (U^\Gamma)^{\nu})+ Comm_{\Gamma}(\phi) \\
Comm_{\Gamma}(\phi) &:=\Gamma\big(A_{\mu, \nu}(\big(\mathcal{N}_\phi \big)^{\mu}, U^{\nu}) \big)-  A_{\mu, \nu}(\big( \Gamma \mathcal{N}_\phi \big)^{\mu}, U^{\nu})- A_{\mu, \nu}(\big(\mathcal{N}_\phi \big)^{\mu}, (U^\Gamma)^{\nu})\\
\end{split}
\ee

 For the commutators, we use the same strategy of estimating $\mathcal{N}_v$ in \eqref{2024feb19eqn47} and \eqref{2024feb20eqn1}. As a result, we have
\be\label{2024feb27eqn74}
 \|\int_0^t e^{(t-s)\Delta} \big( Comm_{\Gamma}(\phi) \big) d s \|_{H^1_x} \lesssim 2^{-m/2+0.1\delta m}\epsilon_1^2.
 \ee
 Now, it would be sufficient to focus on the estimate of $A_{\mu, \nu}(\big(\Gamma \mathcal{N}_\phi \big)^{\mu}, U^{\nu})$ and $ A_{\mu, \nu}(\big(\mathcal{N}_\phi \big)^{\mu}, (U^\Gamma)^{\nu})$.  From the estimate of symbol in \eqref{symbolnormest}, the obtained estimate \eqref{localizedphi}, the  $L^2-L^\infty$-type bilinear estimate and the $L^\infty\rightarrow L^2$ type Sobolev embedding for the High $\times$ Low and Low $\times$ High type interactions,  the  $L^2-L^1$-type  Sobolev embedding  and the $L^\infty\rightarrow L^2$ type  bilinear estimate for the High $\times$ High   type interaction, we have
 \be\label{2024feb27eqn31}
\begin{split}
&  \sum_{(k_1, k_2)\in \chi_k^2 \cup \chi_k^3} \| P_k A_{\mu, \nu}(P_{k_1}\big(  \mathcal{N}_\phi(s) \big)^{\mu}, P_{k_2} (U^\Gamma )^{\nu}(s))    \|_{L^2_x}\lesssim 2^{9k/4-3k_{+}}\langle s \rangle^{-1+2\delta}\epsilon_1^2,\\
 &  \sum_{(k_1, k_2)\in \chi_k^1} \| P_k A_{\mu, \nu}(P_{k_1}\big(  \mathcal{N}_\phi(s) \big)^{\mu}, P_{k_2} (U^\Gamma  )^{\nu}(s))    \|_{L^2_x}\lesssim 2^{k-2k_{+}} \langle s \rangle^{-1+2\delta}\epsilon_1^2.\\
\end{split}
\ee

Similar to the obtained estimate \eqref{localizedphi}, we have
\[
\|P_k\big( \Gamma \mathcal{N}_\phi(s)\big) \|_{L^2_x}\lesssim    2^{k/2+ k_{+}/2}\langle s \rangle^{-1+2 \delta } \epsilon_1^2.  
\]
From the above estimate, the estimate of symbol in \eqref{symbolnormest},   the decay estimate \eqref{finalLinfyphi},    the  $L^2-L^\infty$-type bilinear estimate and the $L^\infty\rightarrow L^2$ type Sobolev embedding for the High $\times$ Low and Low $\times$ High type interactions,  the  $L^2-L^1$-type  Sobolev embedding  and the $L^\infty\rightarrow L^2$ type  bilinear estimate for the High $\times$ High   type interaction, we have
\be\label{2024feb27eqn11}
\begin{split}
 & \sum_{(k_1, k_2)\in \chi_k^2 \cup \chi_k^3} \| P_k A_{\mu, \nu}(P_{k_1}\big( \Gamma \mathcal{N}_\phi(s) \big)^{\mu}, P_{k_2} (U )^{\nu}(s))    \|_{L^2_x}\lesssim 2^{k/2-k_{+}/2}\min\{2^{k}\langle s \rangle^{-1+2\delta} , \langle s \rangle^{-3/2+2\delta} \}\epsilon_1^2,  \\
  & \sum_{(k_1, k_2)\in \chi_k^1} \| P_k A_{\mu, \nu}(P_{k_1}\big( \Gamma \mathcal{N}_\phi(s) \big)^{\mu}, P_{k_2} (U )^{\nu}(s))    \|_{L^2_x}\lesssim  2^{k-2k_{+}} \langle s \rangle^{-1+2\delta}\epsilon_1^2. \\
\end{split}
\ee

Recall the decomposition \eqref{2024feb27eqn61}. To sum up, after combining the above obtained estimates \eqref{2024feb27eqn74}--\eqref{2024feb27eqn11}, we have
\be\label{2024feb27eqn79}
\begin{split}
 \|\int_0^t e^{(t-s)\Delta}& \Gamma\big(A_{\mu, \nu}(\big(\mathcal{N}_\phi \big)^{\mu}, U^{\nu}) \big)\|_{H^1_x} 
  \lesssim 2^{-m/2+0.1\delta m}\epsilon_1^2+  \sum_{k\in \Z} \int_0^t (1+2^{2k}|t-s|)^{-10} 2^{k}\langle s \rangle^{-1+2\delta}\epsilon_1^2 d s  \\
 &\lesssim 2^{-m/2+0.1\delta m}\epsilon_1^2+  \int_0^t (t-s)^{-1/2}\langle s \rangle^{-1+2\delta}\epsilon_1^2 d s  \lesssim 2^{-m/2+2\delta m }\epsilon_1^2. 
 \end{split}
\ee
Therefore, our desired estimate \eqref{improvedenergyvelvec} holds  after combining the obtained estimates \eqref{duhamelvelvec}, \eqref{2024feb20eqn1}, \eqref{2024feb27eqn71}, \eqref{2024feb21eqn1}, and \eqref{2024feb27eqn79}. 
 \end{proof}

In the following Lemma, we control the energy   of the wave part. 
\begin{lemma}\label{energywavepart}
Let $t\in [2^{m-1},2^m]\subset [0, T], m\in \Z_{+}$, s.t., $m\gg 1$. Under the bootstrap assumption \eqref{bootstrap}, the following improved energy estimate for the wave part,  
\be\label{improvedenergywave}
2^{-\delta m}\|U(t)\|_{H^{N_0}} + 2^{-2\delta m}\big(\sum_{\Gamma\in \{S, \Omega\}}\|  U^\Gamma (t)\|_{L^2}+\sum_{k\in \Z_{-}} 2^{-\alpha k} \|P_k U^\Gamma(t)\|_{L^2}\big)\lesssim \epsilon_0. 
\ee
\end{lemma}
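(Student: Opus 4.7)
The plan is to prove the three bounds separately, each via the Duhamel formula for the half-wave variable. For the vector fields, first record the commutator identities
\[
[\Box,\Omega]=0,\qquad [\Box,S]=-2\Box,
\]
which give $\Box(\Omega\phi)=\Omega\mathcal{N}_\phi$ and $\Box(S\phi)=S\mathcal{N}_\phi+2\mathcal{N}_\phi$. Combined with $U^\Gamma=(\partial_t-i|\nabla|)\Gamma\phi$, this yields
\[
(\partial_t+i|\nabla|)U^\Gamma = \Gamma\mathcal{N}_\phi + c_\Gamma\mathcal{N}_\phi,\qquad c_\Omega=0,\ c_S=2,
\]
so that
\[
U^\Gamma(t)=e^{-it|\nabla|}U^\Gamma(0)+\int_0^t e^{-i(t-s)|\nabla|}\bigl(\Gamma\mathcal{N}_\phi+c_\Gamma\mathcal{N}_\phi\bigr)(s)\,ds.
\]
Thus everything reduces to integrated $L^2$-estimates of $\mathcal{N}_\phi$ and $\Gamma\mathcal{N}_\phi$, with initial data controlled through \eqref{initialdatacond} (the weights $\||x|\nabla_x U_0\|_{L^2}$ and $\|\nabla_x^\beta U_0\|_{L^1_x}$ yield the required $\epsilon_0$-bound on $U^\Gamma(0)$ and on the low-frequency weighted norm).

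For the $H^{N_0}$ bound on $U$, I would decompose $\mathcal{N}_\phi$ as in \eqref{2024feb27eqn55}, replacing $\partial_t u$ by $\Delta u + |\nabla|\widetilde{\mathcal{N}}_u$ through the velocity equation \eqref{velocityeqn}. A standard $L^\infty$--$L^2$ bilinear estimate then distributes the $N_0$ derivatives: when derivatives land on $u$, I pair $\|u\|_{H^{N_0+1}}\lesssim\langle s\rangle^{-1/2+\delta}\epsilon_1$ (from Lemma \ref{highenergyu}) with $\|\nabla_{t,x}\phi\|_{L^\infty}\lesssim\langle s\rangle^{-1/2}\epsilon_1$ (from Lemma \ref{decayphilemma}); when they land on $\phi$, I pair $\|U\|_{H^{N_0}}\lesssim\langle s\rangle^{\delta}\epsilon_1$ with the decay $\|u\|_{W^{3,\infty}}\lesssim\langle s\rangle^{-1}\epsilon_1$ from \eqref{Linfu}. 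The cubic term $\partial_i(u_iu_j\partial_j\phi)$ is easier. The integrand is $\langle s\rangle^{-1+\delta}\epsilon_1^2$, giving $\|U(t)\|_{H^{N_0}}\lesssim\epsilon_0+\langle t\rangle^{\delta}\epsilon_1^2\ll 2^{\delta m}\epsilon_0$.

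For the vector field $L^2$-estimates of $U^\Gamma$, I distribute $\Gamma$ across each bilinear factor of $\mathcal{N}_\phi$ via Leibniz. When $\Gamma$ hits a $u$-factor it produces $\Gamma u$, controlled in $H^1$ by Lemma \ref{energyvelovec} with growth $\langle s\rangle^{-1/2+2\delta}\epsilon_1$; paired with $L^\infty$ decay $\langle s\rangle^{-1/2}\epsilon_1$ from \eqref{finalLinfyphi} on the $\phi$-factor, this gives an integrand $\langle s\rangle^{-1+2\delta}\epsilon_1^2$. When $\Gamma$ hits a $\phi$-factor it produces $U^\Gamma$, with $L^2$-norm $\langle s\rangle^{2\delta}\epsilon_1$ from the bootstrap, paired with the $\langle s\rangle^{-1}\epsilon_1$ decay of $u$ from \eqref{Linfu}; same integrand. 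The commutator term $c_\Gamma\mathcal{N}_\phi$ is already handled by the $H^{N_0}$ argument above. Integration produces $\langle t\rangle^{2\delta}\epsilon_1^2\ll 2^{2\delta m}\epsilon_0$.

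The main difficulty, and the remaining step, is the weighted low-frequency bound $\sum_{k\in\Z_-}2^{-\alpha k}\|P_k U^\Gamma(t)\|_{L^2}$. Here the decisive point is that every term in $\mathcal{N}_\phi$ (and hence in $\Gamma\mathcal{N}_\phi$) carries one outer spatial derivative, producing a factor $2^k$ at the output. For low-output frequency I combine this derivative with the $L^2\hookrightarrow L^1$ Bernstein bound on $P_k$, so that $\|P_k\partial_i(fg)\|_{L^2}\lesssim 2^{2k}\|f\|_{L^2}\|g\|_{L^2}$ in the high$\times$high$\to$low regime and $\|P_k\partial_i(fg)\|_{L^2}\lesssim 2^k\|f\|_{L^\infty}\|g\|_{L^2}$ in the other regimes. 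In both cases $\sum_{k\leq 0}2^{-\alpha k}\cdot 2^{2k}$ and $\sum_{k\leq 0}2^{-\alpha k}\cdot 2^k$ converge because $\alpha=1/10<1$. The hard sub-case is the high$\times$high$\to$low interaction where both inputs carry high frequency and one of them must supply decay; here I follow the strategy of \eqref{2024feb19eqn41}, combining the super-localized decay estimate \eqref{decayestimate} with the outer derivative and Bernstein to avoid the summation problem in the inputs' frequency parameter. All remaining integrands decay like $\langle s\rangle^{-1+2\delta}\epsilon_1^2$, yielding the desired $\langle t\rangle^{2\delta}\epsilon_1^2\ll 2^{2\delta m}\epsilon_0$ bound and closing the bootstrap.
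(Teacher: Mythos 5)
There is a genuine gap: your argument treats $\mathcal{N}_\phi$ as a semilinear nonlinearity and runs Duhamel for $U$ and $U^\Gamma$ with ``standard $L^\infty$--$L^2$ bilinear estimates'', but $\mathcal{N}_\phi$ is quasilinear at top order and this scheme loses derivatives. Concretely, in the term $-2\partial_i(u_i\partial_t\phi)$ the worst distribution puts all $N_0$ derivatives plus the outer $\partial_i$ on $\partial_t\phi$, producing $u_i\,\partial_i\nabla_x^{N_0}\partial_t\phi$, which requires $\|\partial_t\phi\|_{H^{N_0+1}}$ — one derivative more than the quantity $\|U\|_{H^{N_0}}$ you propose to pair with $\|u\|_{W^{3,\infty}}$. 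Likewise, in $-\partial_i(\partial_t u_i\,\phi)$, after using $\nabla\cdot\partial_t u=0$ the worst surviving term is $\nabla_x^{N_0}\partial_t u\cdot\nabla\phi\sim\nabla_x^{N_0}\Delta u\cdot\nabla\phi$, needing $u\in H^{N_0+2}$ while only $H^{N_0+1}$ is available. The same loss recurs in your $U^\Gamma$ step: $\Gamma\mathcal{N}_\phi$ contains $u\cdot\nabla\partial_t\Gamma\phi$, which needs $\|U^\Gamma\|_{H^1}$ whereas the bootstrap only controls $\|U^\Gamma\|_{L^2}$. So the integrand bounds $\langle s\rangle^{-1+\delta}\epsilon_1^2$ you claim cannot be obtained by the pairings you list, and the Duhamel/Leibniz route does not close.

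This is exactly why the paper does not use Duhamel for these norms. It runs a modified energy argument: the quasilinear top-order terms $u_i\partial_t\partial_i\nabla_x^\alpha\phi$ and $u_iu_j\partial_i\partial_j\nabla_x^\alpha\phi$ are handled by symmetrization and integration by parts using $\nabla\cdot u=0$ (so the derivative falls on $u$, which decays and is one derivative better), and the term $-\partial_t\nabla_x^\alpha u_i\,\partial_i\phi$ is cancelled by the cubic correction $2\partial_t\nabla_x^\alpha\phi\,\nabla_x^\alpha u_i\partial_i\phi$ built into $E_{modi}(t)$ in \eqref{modifieden}; the leftover $\partial_t^2\nabla_x^\alpha\phi\,\nabla_x^\alpha u_i\partial_i\phi$ is then controlled by substituting the equations for $\phi$ and $u$, exploiting that $u\in H^{N_0+1}$ is one derivative better than $\nabla_{t,x}\phi\in H^{N_0}$ (and $\Gamma u\in H^1$ better than $\nabla_{t,x}\Gamma\phi\in L^2$ for $E^{vec}_{modi}$). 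Only the low-frequency weighted piece $\sum_{k\in\Z_-}2^{-\alpha k}\|P_kU^\Gamma\|_{L^2}$ is done by a direct Duhamel bound in the paper — precisely because at low output frequency there is no derivative loss — and there the simple estimate \eqref{2024marc3eqn1} with the gain $2^{(1/2-\alpha)k}$ suffices, without the super-localized machinery you invoke. To repair your proposal you would need to replace the Duhamel step for the $H^{N_0}$ and $U^\Gamma$ bounds by such a quasilinear energy (or an equivalent normal-form/paradifferential device); as written, the key top-order estimates are not justified.
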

\begin{proof}

 Since the vector field $\Gamma\in \{S,\Omega\}$ plays the same role as $\nabla_x^\alpha, |\alpha|=N_0$,  the estimate of $ {H}^{N_0}$-norm of $U$ and the energy estimate of $U^{\Gamma}(t)$ follows in the same argument. We first give a detailed argument for   the $ {H}^{N_0}$-estimate  of $U$ by using a modified energy method and  then we define the corresponding modified energy for the vector field part.

  For   the $ {H}^{N_0}$-estimate  of $U$, we  define the following modified energy 
 \be\label{modifieden}
 \begin{split}
E_{modi}(t)&= \sum_{|\alpha|\leq N_0} \int_{\R^3} |\nabla_x^\alpha \nabla_x \phi |^2 +|\nabla_x^\alpha \p_t \phi |^2      - u_i u_j  \big( \p_i \nabla_x^\alpha \phi \p_j \nabla_x^\alpha  \phi  \big) + 2\p_t\nabla^\alpha_x\phi \nabla_x^\alpha u_i\p_i \phi  dx.  \\ 
 \end{split}
 \ee
 As a result of direct computations, we have
 \be\label{2024feb28eqn21}
  \begin{split}
 \frac{d  }{dt} E_{modi}(t) &= \sum_{|\alpha|\leq N_0} \int_{\R^3}  2\nabla_x^\alpha \p_t \phi \big(\p_t^2 - \Delta \big)\nabla_x^\alpha  \phi       -  2 u_i u_j  \big( \p_i \nabla_x^\alpha \p_t \phi \p_j \nabla_x^\alpha  \phi    \big)  + 2\p_t\nabla^\alpha_x\phi \p_t \nabla_x^\alpha u_i\p_i \phi  dx \\ 
 & + 2\p_t\nabla^\alpha_x\phi   \nabla_x^\alpha u_i\p_i \p_t \phi- 2 \p_t  u_i u_j  \big( \p_i \nabla_x^\alpha \phi \p_j \nabla_x^\alpha  \phi   \big)    + 2\p_t^2\nabla^\alpha_x\phi  \nabla_x^\alpha u_i\p_i \phi  dx.  \\ 
  \end{split}
 \ee
 Recall \eqref{mainsysnew}. Note that
 \be\label{feb8eqn1}
 \begin{split}
 (\p_t^2-\Delta)\nabla_x^\alpha \phi& = Semi_\alpha(\phi)+\sum_{i=1,2}Quas^i_{\alpha}(\phi), \quad  Semi_\alpha(\phi):=\nabla_x^\alpha \mathcal{N}_\phi - \sum_{i=1,2}Quas^i_\alpha(\phi)  , \\
Quas^1_\alpha(\phi)&:=   -  2u_i \p_t  \p_{x_i}\nabla_x^\alpha\phi  +u_i u_j\p_{x_i}\p_{x_j}\nabla_x^\alpha\phi,\quad Quas^2_\alpha(\phi):= -\p_t \nabla_x^\alpha u_i \p_i \phi. \\
\end{split}
 \ee

For the semilinear part, we use the $L^2-L^\infty$-type bilinear estimate. As a result, we have
\be\label{2024feb28eqn1}
\begin{split}
&\big| \int_{\R^3} \p_t\nabla_x^\alpha\phi Semi_{\alpha}(\phi ) d x\big|+ \big| \int_{\R^3}   \p_t  u_i u_j  \big( \p_i \nabla_x^\alpha \phi \p_j \nabla_x^\alpha  \phi \big)   d x\big| + \big|\int_{\R^3} \p_t\nabla^\alpha_x\phi   \nabla_x^\alpha u_i\p_i \p_t \phi d x \big|\\
&\lesssim \| U(t)\|_{H^{N_0}}\big[ \| U(t)\|_{H^{N_0}} \| u(t)\|_{W^{2, \infty}} +\| U(t)\|_{W^{2, \infty}} \| u(t)\|_{H^{N_0+1}}  \big]\lesssim \langle t \rangle^{-1+2\delta }\epsilon_1^3.
\end{split}
\ee

 Thanks to the symmetric structure and the divergence free condition of $u$, for the quasilinear part $Quas^1_\alpha(\phi)$, we have  
 \[
 \begin{split}
 &  \int_{\R^3} 2\p_t\nabla_x^\alpha\phi Quas^1_{\alpha}(\phi ) d x - \int_{\R^3}2 u_i u_j  \p_i   \p_t  \nabla_x^\alpha \phi \p_j \nabla_x^\alpha  \phi   dx  \\
&=  - \int_{\R^3}  2\p_t\nabla_x^\alpha \phi\big[ 2u_i \p_t  \p_{x_i}\nabla_x^\alpha\phi  +u_i u_j\p_{x_i}\p_{x_j} \nabla_x^\alpha \phi \big] d x - \int_{\R^3}2 u_i u_j  \p_i   \p_t \nabla_x^\alpha \phi \p_j \nabla_x^\alpha  \phi   dx   \\
&=  \int_{\R^3}  - 2\p_{x_i}\big( \p_t \nabla_x^\alpha \phi   \p_j \nabla_x^\alpha  \phi\big) u_i u_j d x  =    \int_{\R^3}   2\big( \p_t \nabla_x^\alpha \phi   \p_j  \nabla_x^\alpha \phi\big) u_i \p_{x_i} u_j d x. \\
\end{split}
 \]
 Therefore, by using the $L^2-L^\infty$-type bilinear estimate, we have
 \be\label{2024feb28eqn51}
  \begin{split}
 &\big|\int_{\R^3} 2\p_t\nabla_x^\alpha\phi Quas^1_{\alpha}(\phi ) d x - \int_{\R^3}2 u_i u_j  \p_i   \p_t  \nabla_x^\alpha \phi \p_j \nabla_x^\alpha  \phi   dx \big|\\
 &\lesssim \| U(t)\|_{H^{N_0}}\big[ \| U(t)\|_{H^{N_0}} \| u(t)\|_{W^{2, \infty}} +\| U(t)\|_{W^{2, \infty}} \| u(t)\|_{H^{N_0+1}}  \big]\\
  &\lesssim \langle t \rangle^{-1+2\delta }\epsilon_1^3. \\
 \end{split}
 \ee

Recall the definition of our modified energy in \eqref{modifieden}. The cubic correction term $ 2\p_t\nabla^\alpha_x\phi \nabla_x^\alpha u_i\p_i \phi$   in \eqref{modifieden}  aims to  cancel the quasilinear part $Quas^2_\alpha(\phi).$ More precisely, we have
\[
   \int_{\R^3} 2\p_t\nabla_x^\alpha\phi Quas^2_{\alpha}(\phi ) d x + \int_{\R^3}  2\p_t\nabla^\alpha_x\phi \p_t \nabla_x^\alpha u_i\p_i \phi    dx =0.
\]

Recall \eqref{2024feb28eqn21}. Now, it remains to estimate the last piece  $\p_t^2\nabla^\alpha_x\phi  \nabla_x^\alpha u_i\p_i \phi$. By using the equation satisfied by $\nabla_x^\alpha \phi$ in   \eqref{feb8eqn1}, we make the following decomposition,  
\[
\begin{split}
&\int_{\R^3} \p_t^2\nabla^\alpha_x\phi  \nabla_x^\alpha u_i\p_i \phi d x = I_1(t)+I_2(t), \\
&I_1(t):= \int_{\R^3} \Delta \nabla^\alpha_x\phi  \nabla_x^\alpha u_i\p_i \phi d x +  \int_{\R^3} \big[Semi_\alpha(\phi)+ Quasi^1_\alpha(\phi)\big]  \nabla_x^\alpha u_i\p_i \phi d x, \\
&I_2(t):= \int_{\R^3}   Quasi^2_\alpha(\phi)  \nabla_x^\alpha u_i\p_i \phi   d x =  -  \int_{\R^3} \p_t \nabla_x^\alpha u_i \p_i \phi \nabla_x^\alpha u_i\p_i \phi     d x . \\ 
\end{split}
\]
For $I_2(t)$, we use the equation satisfied by $u $ in \eqref{mainsysnew}. As a result, we have
\[
\begin{split}
I_2(t)&= I_2^1(t) +I_2^2(t), \quad I_2^1(t):=  -  \int_{\R^3} \Delta \nabla_x^\alpha u_i \p_i \phi \nabla_x^\alpha u_i\p_i \phi    d x, \quad
 I_2^2(t)=  -  \int_{\R^3}   \nabla_x^\alpha \mathcal{N}_u\cdot \nabla \phi \nabla_x^\alpha u_i\p_i \phi     d x. \\
\end{split}
\]

Since $u\in H^{N_0+1}$, which is one derivative better than $\nabla_{t,x}\phi \in H^{N_0}$. By using this fact,  it's clear that there is no losing derivative issue for $I_1(t),I_2^1(t)$, and $I_2^2(t).$ From the $L^2-L^\infty$-type bilinear estimate, we have 
\be\label{2024feb28eqn33}
\begin{split}
\big|\int_{\R^3} \p_t^2\nabla^\alpha_x\phi  \nabla_x^\alpha u_i\p_i \phi d x\big|&\lesssim \big|I_1(t)\big|+ \big|I_2^1(t)\big|+ \big|I_2^1(t)\big|\lesssim \|U(t)\|_{H^{N_0}}^2 \|U(t)\|_{W^{3,\infty}}^2 \\
&\quad + \|U(t)\|_{H^{N_0}}\|u(t)\|_{H^{N_0+1}} \|U(t)\|_{W^{3,\infty}}  + \|u(t)\|_{H^{N_0+1}}^2 \|U(t)\|_{W^{3,\infty}}\\
& \lesssim \langle t \rangle^{-1+2\delta}\epsilon_1^3. 
\end{split}
\ee
To sum up, from the above obtained  estimates \eqref{2024feb28eqn1}, \eqref{2024feb28eqn51},and \eqref{2024feb28eqn33},  we have
 \be\label{2024fen28eqn61}
 \begin{split}
 \| U(t)\|_{H^{N_0}}^2 & \lesssim E_{modi}(t) + \| U(t)\|_{H^{N_0}}^2  \| u(t)\|_{W^{2, \infty}}^2+  \| U(t)\|_{H^{N_0}} \| u(t)\|_{H^{N_0}}  \| U(t)\|_{W^{2, \infty}}\\
 &\lesssim \int_0^t \langle s \rangle^{-1+2\delta }\epsilon_1^3 d s + \epsilon_1^4\lesssim \langle t \rangle^{2\delta} \epsilon_0^2.\\
 \end{split} 
 \ee

Similar to what we did in the $H^{N_0}$-norm estimate, to avoid the losing derivative issue for the energy estimate of $U^\Gamma(t)$, we define the following modified energy, 
 \be\label{modifiedenvect}
 \begin{split}
E_{modi}^{vec}(t)&=  \sum_{\Gamma\in\{S, \Omega\}}\int_{\R^3}        |\p_t \Gamma \phi |^2 + |\nabla_x \Gamma \phi |^2  -\int_{\R^3} u_i u_j  \big(   \p_i \Gamma \phi \p_j \Gamma \phi  \big) + 2\p_t\Gamma\phi \Gamma u_i\p_i \phi dx.  \\ 
 \end{split}
 \ee
Again, due to the symmetric structure of $\mathcal{N}_\phi$ and the fact that $\Gamma u\in H^{1}$, which is one derivative better than $\nabla_{t,x}\Gamma\phi$.  With   minor modifications in the arguments for the estimate of $E_{modi}^{}(t)$, we have 
 \be\label{2024fen28eqn62}
 \begin{split}
 \big|\frac{d}{d t} E_{modi}^{vec}(t)\big| &\lesssim\sum_{\Gamma\in \{S, \Omega\}} \|U^{\Gamma}(t)\|_{L^{2}}^2 \|U(t)\|_{W^{3,\infty}}^2+ \|U^{\Gamma}(t)\|_{L^{2}}\|\Gamma u(t)\|_{H^{1}} \|U(t)\|_{W^{3,\infty}}\\ 
 & \quad  + \|\Gamma u(t)\|_{H^{1}}^2 \|U(t)\|_{W^{3,\infty}}\lesssim \langle t \rangle^{-1+4\delta}\epsilon_1^3. \\
 \Longrightarrow\quad     \| U^{\Gamma}(t)\|_{L^{2}}^2& \lesssim E_{modi}(t) +   \| U^{\Gamma}(t)\|_{L^{2}}^2  \| u(t)\|_{W^{2, \infty}}^2   +   \| U^{\Gamma}(t)\|_{L^{2}} \| \Gamma u(t)\|_{H^{1}}  \| U(t)\|_{W^{2, \infty}}\\
  &\lesssim \int_0^t \langle s \rangle^{-1+4\delta }\epsilon_1^3 d s + \epsilon_1^4\lesssim \langle t \rangle^{4\delta} \epsilon_0^2.\\
 \end{split} 
 \ee

Lastly, we consider the low frequency part (the size of the output frequency is less than one), for which there is no losing derivative issue. A more detailed explanation is provided as follows.  For the the High $\times$ High type interaction, there is no losing derivative issue since derivatives can be distributed between two inputs. Meanwhile,  for the High $\times$ Low type interaction and the Low $\times$ High type interaction, the frequency of the  High frequency part is comparable with the output frequency, which implies that   the size of the   High frequency input is less one. Consequently, there is no losing derivative issue. 

Recall \eqref{mainsysnew}. Since the nonlinearity $\mathcal{N}_\phi$ has a derivative outside, which contributions the smallness of $2^k$. Therefore, from the $L^2-L^\infty_x$ type bilinear estimate, for any $\Gamma\in \{S, \Omega\},$ we have
\be\label{2024marc3eqn1}
\begin{split}
 \sum_{k\in \Z_{-}} 2^{-\alpha k}\|P_k U^\Gamma(t)\|_{L^2}& \lesssim \epsilon_0 + \int_0^t \sum_{k\in \Z_{-}} 2^{-\alpha k}\big( \|P_k \Gamma \mathcal{N}_\phi(s)\|_{L^2}+\|P_k  \mathcal{N}_\phi(s)\|_{L^2}\big) d s \\
&\lesssim \epsilon_0+   \int_0^t \sum_{k\in \Z_{-}} 2^{(1/2-\alpha) k}\langle s\rangle^{-1+2\delta}\epsilon_1^2  ds \lesssim \langle t\rangle^{2\delta}\epsilon_1^2. 
\end{split}
\ee
 Hence finishing the proof of our desired estimate \eqref{improvedenergywave}.
 \end{proof}
\section{$Z$-norm estimate of the velocity part and the wave part}\label{decay}

In this section, we improve  the estimates of $Z_u(t)$ and $Z_\phi(t)$  within the time interval $[0,T].$ Hence finishing the bootstrap argument. Before   proceeding to the  the estimate of $Z_u(t)$ and $Z_\phi(t)$, we first prove a technical bilinear estimate, which will be used to study the bilinear interaction of wave part with angular localization on the Fourier side.  
 
Let $\mu, \nu\in \{+,-\}$, $m(\xi, \eta) \in  S^{\infty}, \phi\in C^\infty_0(\R^2)$, $l,k, k_1, k_2\in \mathbb{Z}$, $\alpha\in \R_+,$ $l\leq 0$,   and   $f,g \in L^2\cap L^1$.   We define the rescaled Schwartz function $\phi_{l}(x)=\phi(2^{-l}x)$ and  a bilinear form as follows, 
\be\label{2024feb19eqn1}
\begin{split}
T^l_{k;k_1, k_2}(f, g) &=   \int_{\R^2}  e^{it (|\xi|-\mu|\xi-\eta|-\nu |\eta|)} \psi_{k}(\xi) m(\xi, \eta) \big(\frac{\xi}{|\xi|}\times \frac{\eta}{|\eta|}\big)^\alpha \phi_{  l }(\frac{\xi}{|\xi|}-\mu\frac{\eta}{|\eta|}) \widehat{f_{k_1} }(\xi-\eta) \widehat{g_{k_2}}(\eta) d \eta,\\
\end{split}
\ee

\begin{lemma}\label{bilinearest}

 For the    bilinear form defined in \eqref{2024feb19eqn1}, the following estimate holds, 
 \be\label{bilinearestsummary}
\begin{split}
\| T^l_{k;k_1, k_2}(f, g)\|_{L^2} \lesssim &  2^{\alpha l }\| m\|_{\mathcal{S}^\infty_{k,k_1,k_2}} \min\Big\{    \| \hat{f}(\xi) \psi_{[k_1-2,k_1+2]}(\xi) \|_{L^2_\xi}  \| \mathcal{F}^{-1}[e^{-it \nu |\xi|} \widehat{g}(\xi)\psi_{k_2}(\xi)]\|_{L^\infty_x}, \\
& \qquad  \|\hat{g}(\xi) \psi_{[k_2-2,k_2+2]}(\xi)\|_{L^2}  \| \mathcal{F}^{-1}[e^{-it \mu |\xi|} \widehat{f}(\xi)\psi_{k_1}(\xi)]\|_{L^\infty_x}, \\ 
& \qquad 2^{2k_2 +l} \| f\|_{L^2} \|\widehat{g}\|_{L^\infty_\xi},  \min\big\{2^{k+(k_1+k_2)/2+l}, 2^{k_2+k_1+l}\big\}  \| \widehat{f}\|_{L^\infty_{\xi}} \| g\|_{L^2}\Big\} .\\
\end{split}
\ee
\end{lemma}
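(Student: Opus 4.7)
My plan is first to recognize $T^l_{k;k_1,k_2}(f,g)(\xi)$ as, up to a unit-modulus phase $e^{it|\xi|}$, the Fourier transform of a standard bilinear Fourier-multiplier operator acting on the half-wave evolutions
\[
F := e^{-it\mu|\nabla|}\, f_{k_1}, \qquad G := e^{-it\nu|\nabla|}\, g_{k_2}.
\]
Absorbing the linear phases $e^{-it\mu|\xi-\eta|}$ and $e^{-it\nu|\eta|}$ into $\widehat F(\xi-\eta)$ and $\widehat G(\eta)$ respectively gives
\[
T^l_{k;k_1,k_2}(f,g)(\xi) \;=\; e^{it|\xi|}\psi_k(\xi)\int \widetilde m(\xi,\eta)\,\widehat F(\xi-\eta)\widehat G(\eta)\,d\eta,
\]
where the modified symbol is $\widetilde m(\xi,\eta) := m(\xi,\eta)\bigl(\tfrac{\xi}{|\xi|}\times\tfrac{\eta}{|\eta|}\bigr)^\alpha\phi_l\bigl(\tfrac{\xi}{|\xi|}-\mu\tfrac{\eta}{|\eta|}\bigr)$. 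Since $|e^{it|\xi|}\psi_k(\xi)|\lesssim 1$, Plancherel reduces the $L^2_\xi$ bound on $T^l_{k;k_1,k_2}(f,g)$ to an $L^2_x$ bound on the bilinear operator $B(F,G)$ with symbol $\widetilde m$.

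The one symbol-level ingredient to prove is
\[
\|\widetilde m\|_{\mathcal{S}^\infty_{k,k_1,k_2}}\;\lesssim\; 2^{\alpha l}\,\|m\|_{\mathcal{S}^\infty_{k,k_1,k_2}}.
\]
Pointwise, on the support of $\phi_l(\tfrac{\xi}{|\xi|}-\mu\tfrac{\eta}{|\eta|})$ one has $|\tfrac{\xi}{|\xi|}\times\tfrac{\eta}{|\eta|}|\lesssim 2^l$, giving the gain $2^{\alpha l}$. Derivatives of $\phi_l$ cost $2^{-l}$, but each extra derivative hitting the cross-product factor either reduces the power of $\alpha$ by one (producing an extra $2^{-l}$ gain that exactly compensates) or lands on the $\xi/|\xi|$ or $\eta/|\eta|$ factors; the latter is absorbed into the Mihlin-type weighting $\xi^\alpha\eta^\beta\nabla_\xi^\alpha\nabla_\eta^\beta$ built into the $\mathcal{S}^\infty$ class.

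Given this symbol bound, bounds (1) and (2) are the standard Coifman--Meyer-type $L^2_x\times L^\infty_x$ estimate for $B(F,G)$ in the two possible orderings, after recognizing $\|F\|_{L^2}=\|\widehat f\,\psi_{[k_1-2,k_1+2]}\|_{L^2}$ and $\|G\|_{L^\infty}=\|\mathcal{F}^{-1}[e^{-it\nu|\xi|}\widehat g(\xi)\psi_{k_2}(\xi)]\|_{L^\infty}$, and symmetrically. For bounds (3) and (4) I stay on the Fourier side and apply Cauchy--Schwarz in $\eta$, pulling out $\|\widehat g\|_{L^\infty_\eta}$ for (3) and $\|\widehat f\|_{L^\infty_\xi}$ (after the change of variable $\rho=\xi-\eta$) for (4). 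Each case reduces to estimating the two volume factors $V_\eta(\xi):=|\mathrm{supp}_\eta\,\widetilde m(\xi,\cdot)|$ and $V_\xi(\eta):=|\mathrm{supp}_\xi\,\widetilde m(\cdot,\eta)|$, whose geometric mean produces the claimed dyadic power. For (3), the annulus $|\eta|\sim 2^{k_2}$ intersected with an angular sector of width $2^l$ gives $V_\eta(\xi)\lesssim 2^{2k_2+l}$, yielding the factor $2^{2k_2+l}$ directly. For (4), the second option $2^{k_1+k_2+l}$ arises from the symmetric analysis to (3); the sharper alternative $2^{k+(k_1+k_2)/2+l}$ comes from additionally exploiting the outer radial constraint $|\xi|\sim 2^k$ and $|\xi-\eta|\sim 2^{k_1}$, which is effective in the regimes $(k_1,k_2)\in\chi_k^2\cup\chi_k^3$ where the three annuli are nearly concentric and the geometric-mean rearrangement gains a factor of $2^{\min(k_1,k_2)/2}$ over the symmetric bound.

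The main technical obstacle, in my view, is the clean verification of the $\mathcal{S}^\infty_{k,k_1,k_2}$-norm bound for $\widetilde m$: the coexistence of the vanishing factor $(\tfrac{\xi}{|\xi|}\times\tfrac{\eta}{|\eta|})^\alpha$ and the singular angular cutoff $\phi_l(\tfrac{\xi}{|\xi|}-\mu\tfrac{\eta}{|\eta|})$ on the same small angular set forces careful bookkeeping of how $\xi$- and $\eta$-derivatives distribute, so that the net gain is exactly $2^{\alpha l}$ with no logarithmic loss. Everything else in the lemma is routine symbol calculus, Plancherel, Cauchy--Schwarz, and two-dimensional volume estimates on intersections of annular/sectorial regions.
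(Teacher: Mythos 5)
Your reduction hides the actual content of the lemma in a symbol-class claim that is false. The modified symbol $\widetilde m(\xi,\eta)=m(\xi,\eta)\bigl(\tfrac{\xi}{|\xi|}\times\tfrac{\eta}{|\eta|}\bigr)^\alpha\phi_l\bigl(\tfrac{\xi}{|\xi|}-\mu\tfrac{\eta}{|\eta|}\bigr)$ does \emph{not} satisfy $\|\widetilde m\|_{\mathcal{S}^\infty_{k,k_1,k_2}}\lesssim 2^{\alpha l}\|m\|_{\mathcal{S}^\infty_{k,k_1,k_2}}$: each derivative that lands on $\phi_l$ itself costs $2^{-l}$, and the Mihlin weights $\xi^a\eta^b$ in \eqref{symbolclass} only offset the frequency scale $2^{-k}$, never the angular scale $2^{-l}$; the vanishing factor $(\hat\xi\times\hat\eta)^\alpha$ can absorb at most $\alpha$ such losses, while the $\mathcal{S}^\infty$ norm takes up to $20$ derivatives and $\alpha$ is a fixed small parameter. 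Since $l$ can be as negative as $O(-m)$, the loss is enormous, so your ``Coifman--Meyer with symbol $\widetilde m$'' derivation of the first two bounds collapses. This is precisely why the paper does not treat the angular cutoff as part of a standard symbol: it introduces the partition of unity \eqref{angularpartition} into sectors of width $2^l$ (and $2^{k_2-k_1+l}$ for the other input), proves the anisotropic kernel bound \eqref{2024feb19eqn6} by integrating by parts along directions adapted to each sector center, and then recovers the global estimate by almost-orthogonality in $L^2$ over the sectors (with the finite-overlap constraint $|\theta^{\cdot}_{i_1}-\theta^l_{i_2}|\lesssim 2^l$), as in \eqref{2024feb19eqn11} and \eqref{2024march27eqn21}; only in this way do bounds (1)--(2) come out with the \emph{unlocalized} $L^\infty_x$ norm of the free evolution and constant $2^{\alpha l}$.

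Your derivation of bounds (3)--(4) has a second gap: pointwise Cauchy--Schwarz in $\eta$ plus support-volume counting yields the geometric mean of the $\eta$-support and $\xi$-support volumes, which in general is weaker than the stated powers. For instance, in the Low$\times$High regime $k_1\le k-10$, $k_2\sim k$, the $\xi$-support for fixed $\xi-\eta$ carries no angular gain (the angle condition between $\xi$ and $\xi-\eta$ is then vacuous), so your argument produces only $2^{2k_2+l/2}$ rather than $2^{2k_2+l}$ in (3), and similarly it does not produce the option $2^{k+(k_1+k_2)/2+l}$ in (4). In the paper the single full power $2^l$ comes from a different mechanism: Bernstein on one angular sector of one input placed in $L^\infty_x$ (volume $2^{2k_2+l}$, resp.\ $2^{k_1+k_2+l}$) combined with square-summation over the sectors of the other input via $L^2$-orthogonality, cf.\ the last lines of \eqref{2024feb19eqn11}. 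So the sector decomposition and orthogonality are not an optional refinement but the key missing idea in both halves of your proposal.
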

\begin{proof}
For any $l\in (-\infty, -100)\cap \Z_{-}$,  we define $K_l:=\lfloor \pi 2^{5-l} \rfloor$. Moreover,  $  \forall i\in\{1, \cdots, K_l\}, $ denote $ \phi^l_i:=i 2^{l-4}$, and $\theta_i^l:=(\cos\phi_i^l, \sin \phi_i^l)$. Let
\be\label{angularpartition}
\varphi_{l,i}(\xi):=\frac{\psi_{\leq l-4}(\frac{\xi}{|\xi|}-\theta_i^l)}{\sum_{i=1,\cdots, K}\psi_{\leq l-4}(\frac{\xi}{|\xi|}-\theta_i^l)}, \quad \Longrightarrow \sum_{i\in \{1,\cdots, K_l\}} \varphi_{l,i}(\xi) = 1. 
\ee
Therefore $\{\varphi_{l,i}(\xi)\}_{i=1}^{K^l}$ is a partition of unity with finite overlap. Moreover, $supp(\varphi_{l,i}(\cdot))$ lies in an angular sector centered at $\theta_i^l\in \mathbb{S}^1$. 

We now examine the kernel of symbols in \eqref{2024feb19eqn1} together with the angle-dependent cutoff functions defined in \eqref{angularpartition}. More precisely, we define  
\[
\begin{split}
 \mathcal{K}_{k;k_1,k_2}^{ \tilde{l},i_1;l,i_2}(x,y):=\int_{\R^2} \int_{\R^2} e^{ix\cdot\xi+ i y\cdot \eta }&   \big(\frac{\xi}{|\xi|}-\mu\frac{\eta}{|\eta|}\big)^\alpha   \phi_{l}(\frac{\xi}{|\xi|}-\mu\frac{\eta}{|\eta|})\\ 
 &\times m(\xi, \eta)\psi_{k}(\xi) \psi_{k_1}(\xi-\eta)  \psi_{k_2}(\eta)\varphi_{\tilde{l},i_1}(\xi)  \varphi_{l,i_2}(\eta) d \eta d \xi .    
\end{split}
\]
Note that, for any $x,y\in \R^2$, we have 
\[
  e^{i x\cdot\xi} = \frac{\theta_{i_1}^{\tilde{l}}\cdot \nabla_\xi ( e^{i x\cdot\xi}  )}{i(\theta_{i_1}^{\tilde{l}}\cdot x)}, \quad   e^{i x\cdot\xi} = \frac{\theta_{i_1}^{\tilde{l}}\times  \nabla_\xi ( e^{i x\cdot\xi}  )  }{i(\theta_{i_1}^{\tilde{l}}\times x) },\quad e^{i y\cdot\eta} = \frac{\theta_{i_2}^{l}\cdot \nabla_\eta ( e^{i  y\cdot\eta}  )}{i(\theta_{i_2}^{l}\cdot y)}, \quad   e^{i  y\cdot\eta} = \frac{\theta_{i_2}^{l}\times  \nabla_\eta ( e^{i  y\cdot\eta}  )  }{i(\theta_{i_2}^{l}\times y) }. 
\]
Moreover, we observe that  the directional derivatives $\theta_{i_1}^{\tilde{l}}\cdot \nabla_\xi $ and    $\theta_{i_2}^{l}\cdot \nabla_\eta$  of the symbol lose less. 

 To exploit the benefit of this fact, we first rewrite the oscillation phase using these identities and proceed with integration by parts along the $\{\theta_{i_1}^{\tilde{l}}, (\theta_{i_1}^{\tilde{l}})^{\bot}\}$ directions for $\xi$ and $\{\theta_{i_2}^{l}, (\theta_{i_2}^{l})^{\bot}\}$ directions for $\eta$ in twenty iterations. As a result,  for the kernel $ \mathcal{K}_{k;k_1,k_2}^{l;\tilde{l}}(x,y)$, we have

\be\label{2024feb19eqn6}
\begin{split}
\big|   \mathcal{K}_{k;k_1,k_2}^{ \tilde{l},i_1;l,i_2}(x,y)\big|
 &\lesssim 2^{\alpha l  }\| m(\xi, \eta)\|_{\mathcal{S}^\infty_{k,k_1,k_2}} 2^{2\min\{k,k_1\}+\min\{l,\tilde{l}\}}(1+2^{\min\{k,k_1\}}|\theta_{i_1}^{\tilde{l}}\cdot x|)^{-4}\\
 &\quad \times (1+2^{k+\min\{l,\tilde{l}\}}|\theta_{i_1}^{l_1}\times x|)^{-4} 2^{2k_2+l} (1+2^{k_2}|\theta_{i_2}^{l}\cdot x|)^{-4}(1+2^{k_2+l}|\theta_{i_2}^{l}\times y|)^{-4}.\\
 \end{split}
\ee

Applying the above partition of unity, we decompose the bilinear form $T^l_{k;k_1, k_1}(f, g) $ as follows, 
\be\label{angulardecomp}
\begin{split}
T^l_{k;k_1, k_2}(f, g)&=\sum_{\begin{subarray}{c}
i_1  \in \{1,\cdots, K^{(k_2-k_1)+l}\},i_2  \in \{1,\cdots, K^{l}\}\\
|\theta_{i_1}^{(k_2-k_1)+l}-\theta_{i_2}^{ l}|\leq  2^{l+10}\\
\end{subarray}} T^{l;i_0,i_1, i_2}_{k;k_1, k_2}(f, g), \\
 T^{l;i_0,i_1, i_2}_{k;k_1, k_2}(f, g)&=  \int_{\R^2}  e^{it (|\xi|-\mu|\xi-\eta|-\nu |\eta|)} \psi_{k}(\xi) \psi_{k_1}(\xi-\eta)\psi_{k_2}(\eta)  \big(\frac{\xi}{|\xi|}-\mu\frac{\eta}{|\eta|}\big)^\alpha \phi_{l}(\frac{\xi}{|\xi|}-\mu\frac{\eta}{|\eta|}) \\ 
&\quad  \times \varphi_{k_2-k_1+l,i_1}(\xi)  \varphi_{l,i_2}(\eta) \varphi_{k_2-k_1+l, i_1}(\pm(\xi-\eta)) m(\xi, \eta)\widehat{f^{\mu}}(\xi-\eta) \widehat{g^{\nu}}(\eta) d \eta.\\
\end{split}
\ee
In the above decomposition, we used the fact that $|\frac{\xi}{|\xi|}\times \frac{\xi-\eta}{|\xi-\eta|}|\sim 2^{k_2-k_1+l}. $ Due to   the orthogonality in $L^2$, from the   estimate of kernel in \eqref{2024feb19eqn6},  we have
\be\label{2024feb19eqn11}
\begin{split}
\|T^l_{k;k_1, k_2}(f, g)\|_{L^2_\xi}^2& \lesssim  \sum_{\begin{subarray}{c}
i_1  \in \{1,\cdots, K^{(k_2-k_1)+l}\},i_2  \in \{1,\cdots, K^{l}\}\\
|\theta_{i_1}^{(k_2-k_1)+l}-\theta_{i_2}^{ l}|\leq  2^{l+10}\\
\end{subarray}}  \|T^{l;i_0,i_1, i_2}_{k;k_1, k_2}(f, g)\|_{L^2_\xi}^2 \\
 &\lesssim  \sum_{\begin{subarray}{c}
i_1  \in \{1,\cdots, K^{(k_2-k_1)+l}\},i_2  \in \{1,\cdots, K^{l}\}\\
|\theta_{i_1}^{(k_2-k_1)+l}-\theta_{i_2}^{ l}|\leq  2^{l+10}\\
\end{subarray}}  2^{2\alpha l }  \min\big\{  \| \hat{g}(\xi)\varphi_{l,i_2}(\xi)\psi_{[k_2-2,k_2+2]}(\xi) \|_{L^2_\xi}^2\\ 
&\quad \times   \|\mathcal{F}_{\xi\rightarrow x}^{-1}[e^{-it \mu |\xi|
}\varphi_{k_2-k_1+l, i_1}(\pm \xi)\hat{f}(\xi)\psi_{k_1}(\xi)  ]\|_{L^\infty_x}^2   ,  \|\varphi_{k_2-k_1+l, i_1}(\pm \xi)\hat{f}(\xi)\psi_{k_1}(\xi) \|_{L^2_\xi}^2 \\
&  \quad \times  \| \mathcal{F}_{\xi\rightarrow x}^{-1}[e^{-it \nu |\xi|
} \hat{g}(\xi)\varphi_{l,i_2}(\xi)\psi_{[k_2-2,k_2+2]}(\xi) \|_{L^\infty_x}^2 \big\} \| m(\xi, \eta)\|_{\mathcal{S}^\infty_{k,k_1,k_2}}^2  \\ 
& \lesssim 2^{2\alpha l } \| m(\xi, \eta)\|_{\mathcal{S}^\infty_{k,k_1,k_2}}^2   \big(\min\big\{ 2^{k_2+k_1+l}   \| \hat{f}(\xi)\psi_{k_1}(\xi)  \|_{L^\infty_\xi} \| \hat{g}(\xi) \psi_{[k_2-2,k_2+2]}(\xi) \|_{L^2_\xi},  \\
&  \quad  2^{2k_2+l}   \| \hat{g}(\xi) \psi_{[k_2-2,k_2+2]}(\xi) \|_{L^\infty_\xi}  \| \hat{f}(\xi)\psi_{k_1}(\xi)  \|_{L^2_\xi}  \big\}\big)^2. 
\end{split}
\ee

Note that if $|k_1-k_2|\geq 10$, we have $|k-\max\{k_1,k_2\}|\leq 10$. To prove the final estimate of \eqref{bilinearestsummary}, it would be sufficient to consider the case $|k_1-k_2|\leq 10$. As in \eqref{2024feb19eqn11}, if instead of using the $L^2-L^\infty$-type bilinear estimate, we first use the $L^2\rightarrow L^1$-type Sobolev embedding and then use the $L^2-L^2$-type bilinear estimate. As a result, we have
\be
\|T^l_{k;k_1, k_2}(f, g)\|_{L^2_\xi}\lesssim  2^{\alpha l } \| m(\xi, \eta)\|_{\mathcal{S}^\infty_{k,k_1,k_2}} 2^{k+l/2}2^{(k_1+k_2+l)/2} \| \hat{f}(\xi) \psi_{k_1}(\xi) \|_{L^\infty_\xi}  \| \hat{g}(\xi) \psi_{[k_2-2,k_2+2]}(\xi) \|_{L^2_\xi}. 
\ee
Moreover, from the obtained estimate \eqref{2024feb19eqn11},   we have 
\be
\|T^l_{k;k_1, k_2}(f, g)\|_{L^2_\xi}\lesssim 2^{\alpha l } \| m(\xi, \eta)\|_{\mathcal{S}^\infty_{k,k_1,k_2}}   \| \hat{f}(\xi) \psi_{[k_1-2,k_1+2]}(\xi) \|_{L^2_\xi}  \| \mathcal{F}^{-1}[e^{-it \nu |\xi|} \widehat{g}(\xi)\psi_{k_2}(\xi)]\|_{L^\infty_x}. 
\ee
Alternatively, if we  apply  the  partition of unity with $i_1,i_2  \in \{1,\cdots, K^{ l}\}, $  for $\xi,\eta$, we have 
\be\label{2024march27eqn21}
\begin{split}
\|T^l_{k;k_1, k_2}(f, g)\|_{L^2_\xi}^2 & \lesssim  \sum_{\begin{subarray}{c}
i_1,i_2  \in \{1,\cdots, K^{l}\}, 
|i_1-i_2|\leq  2^{10}\\
\end{subarray}}  2^{2\alpha l }     \| \hat{g}(\xi)\varphi_{l,i_2}(\xi)\psi_{[k_2-2,k_2+2]}(\xi) \|_{L^2_\xi}^2 \\
&\quad \times  \|\mathcal{F}_{\xi\rightarrow x}^{-1}[e^{-it \mu |\xi|
} \hat{f}(\xi)\psi_{k_1}(\xi)  ]\|_{L^\infty_x}^2 \| m(\xi, \eta)\|_{\mathcal{S}^\infty_{k,k_1,k_2}}^2 \\
& \lesssim 2^{2\alpha l } \| m(\xi, \eta)\|_{\mathcal{S}^\infty_{k,k_1,k_2}}^2    \| \hat{g}(\xi) \psi_{[k_2-2,k_2+2]}(\xi) \|_{L^2_\xi}^2  \|\mathcal{F}_{\xi\rightarrow x}^{-1}[e^{-it \mu |\xi|
} \hat{f}(\xi)\psi_{k_1}(\xi)  ]\|_{L^\infty_x}^2 .
\end{split}
\ee
To sum up, our desired estimates in \eqref{bilinearestsummary} hold from the above obtained   estimates \eqref{2024feb19eqn11}--\eqref{2024march27eqn21}. 
\end{proof}

\subsection{The estimate of $Z_u(t)$ }
\begin{lemma}\label{decayvelinf}
Let $t\in [2^{m-1},2^m]\subset [0, T], m\in \Z_{+}$, s.t., $m\gg 1$. Under the bootstrap assumption \eqref{bootstrap}, the following improved energy estimate for the velocity field $u$ holds, 
\be\label{2024march1eqn1}
Z_{u}(t)\lesssim 2^{-m}\epsilon_0. 
\ee
\end{lemma}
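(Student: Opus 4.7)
The plan is to split $u$ into the heat variable $v$ plus the normal form correction, $u = v - \sum_{\mu,\nu\in\{+,-\}} A_{\mu,\nu}(U^\mu,U^\nu)$ as in \eqref{norform}, and bound $Z_u$ on each piece separately.

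For the correction, the symbol $a_{\mu,\nu}$ is non-singular thanks to the $q_{null}^{wwh}$ null structure (see \eqref{symbolnormest}). Combining this with the sharp wave decay \eqref{finalLinfyphi}, the high-low and low-high frequency interactions are handled by a standard $L^2\times L^\infty$ bilinear estimate. For the high-high interactions landing at low output frequency---the only case threatening the summability of the $2^{k}$ weight in $Z_u$---we invoke the super-localized decay estimate of Lemma \ref{decaylemma}, exactly as in \eqref{2024feb19eqn41}. This yields $\sum_k 2^{k+4k_+}\|P_k A_{\mu,\nu}(U^\mu,U^\nu)\|_{L^2}\lesssim 2^{-m}\epsilon_1^2$, which is well below the $2^{-m}\epsilon_0$ threshold.

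For $v$, apply the Duhamel formula $v(t)=e^{t\Delta}v(0)+\int_0^t e^{(t-s)\Delta}\mathcal{N}_v(s)\,ds$ together with the frequency-localized heat bound $\|P_k e^{s\Delta}f\|_{L^2}\lesssim 2^k e^{-cs 2^{2k}}\|P_k f\|_{L^1}$. The linear contribution uses the $L^1$ initial data from \eqref{initialdatacond}: at frequency $k$ the bound $2^{k-11k_+}e^{-ct 2^{2k}}\epsilon_0$ summed against the $Z_u$-weight is dominated by $k\approx -m/2$, where $2^{2k}\sim 2^{-m}$, and yields $2^{-m}\epsilon_0$ (with the $L^1$ norm of $v(0)-u_0$ absorbed by the symbol bound and the $L^1$ assumption on $U_0$). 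For the nonlinear contribution, we exploit that $\mathcal{N}_v$ has a derivative outside (from the $\d \widetilde{\mathcal{N}}_u$ structure of \eqref{2024feb27eqn41} and the null-structure factor $q_{null}^{wwh}$ in the $A_{\mu,\nu}(\mathcal{N}_\phi^\mu,U^\nu)$ terms), so $\|P_k \mathcal{N}_v(s)\|_{L^1}$ carries a $2^{k_-}$ factor on top of the $\langle s\rangle^{-1+2\delta}\epsilon_1^2$ bound coming from \eqref{2024feb19eqn47}. The resulting integral $\sum_k 2^{2k+4k_+}\int_0^t 2^{k_-} e^{-c(t-s)2^{2k}}\langle s\rangle^{-1+2\delta}\,ds$ is split into $s\in[0,t/2]$ (strong exponential gain in $t-s$) and $s\in[t/2,t]$ (where $\langle s\rangle^{-1+2\delta}\sim 2^{-m+2\delta m}$), both pieces giving $\lesssim 2^{-m+\delta' m}\epsilon_1^2$ after summation over $k$. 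At high output frequency the $L^1$ bound is replaced by the $H^{N_0+1}$ energy bound \eqref{improvedenergyvel} used through $L^2$, which decays rapidly in $k$ thanks to the $4k_+$ vs. $N_0$ gap.

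The main obstacle lies at the critical scale $2^{2k}(t-s)\sim 1$, where the heat kernel neither concentrates nor exponentially damps, so neither frequency smallness nor time decay from the heat semigroup is available. All smallness must come from the derivative and null-structure factors in $\mathcal{N}_v$; in particular, the high-high-to-low bilinear interactions in the $A_{\mu,\nu}(\mathcal{N}_\phi^\mu,U^\nu)$ piece can only be summed over output frequency by using the super-localized decomposition from Lemma \ref{decaylemma}, mirroring the estimate \eqref{2024feb19eqn41} already used in the $H^{N_0+1}$ energy estimate.
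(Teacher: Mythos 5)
There is a genuine gap, and it sits at the heart of the lemma: your treatment of the normal form correction $A_{\mu,\nu}(U^{\mu},U^{\nu})$ at fixed time $t$. You claim that an $L^2\times L^\infty$ bilinear estimate with the sharp wave decay \eqref{finalLinfyphi}, together with the super-localized estimate of Lemma \ref{decaylemma} ``exactly as in \eqref{2024feb19eqn41}'', yields $\sum_k 2^{k+4k_+}\|P_k A_{\mu,\nu}(U^{\mu},U^{\nu})\|_{L^2}\lesssim 2^{-m}\epsilon_1^2$. Those tools cannot give $2^{-m}$: putting one wave factor in $L^2$ (size $2^{\delta m}\epsilon_1$ by the bootstrap) and one in $L^\infty$ (size $2^{-m/2}\epsilon_1$ by \eqref{finalLinfyphi}) gives only $2^{-m/2+\delta m}\epsilon_1^2$, and the super-localized argument \eqref{2024feb19eqn41} likewise produces $2^{-m/2+\delta m}\epsilon_1^2$ — this is exactly the rate obtained for the same quantity in the energy estimate \eqref{2024feb19eqn42}, and it is a full factor $2^{m/2}$ short of the target. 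The missing half power of $t$ is precisely what the retained w$\times$w$\rightarrow$w null structure $q^{www}_{null}$ is for, and your proposal never uses it. The paper's Step 2 (\eqref{2024march3eqn19}--\eqref{2024march4eqn91}) gets the extra gain by localizing the angle between $\xi$ and $\pm\eta$ with a threshold $\bar l\sim -m/2$, using the angularly localized bilinear estimate of Lemma \ref{bilinearest} at the threshold (where the null-form factor supplies $2^{2\bar l}\sim 2^{-m}$), and for larger angles integrating by parts in $\eta$ once or twice against the phase $|\xi|-\mu|\xi-\eta|-\nu|\eta|$, controlling the boundary terms with the weighted profile bound \eqref{2024feb16eqn31} and Lemma \ref{decaylemma}. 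Without this (or some substitute exploiting $q^{www}_{null}$ and the frequency-space oscillation), the fixed-time bound $2^{-m}$ for the quadratic wave self-interaction is simply not reachable, and the lemma fails at exactly the point the introduction identifies as the main new difficulty.

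A secondary issue: your Duhamel estimate for the $v$-part concludes with $2^{-m+\delta' m}\epsilon_1^2$. Since $\epsilon_1=\epsilon_0^{3/4}$, a bound with any power $2^{\delta' m}$ loss is not $\lesssim 2^{-m}\epsilon_0$ once $m$ is large, so it does not close the bootstrap for the $Z_u$ component. The paper avoids the loss by refining the High$\times$High analysis of $A_{\mu,\nu}((\mathcal{N}_\phi)^\mu,U^\nu)$: it splits $\mathcal{N}_\phi=\mathcal{N}_\phi^1+\mathcal{N}_\phi^2$ as in \eqref{nolinearityphi}, uses the faster decay of $\nabla u$ for $\mathcal{N}_\phi^2$, and applies the super-localized decay estimate to the $P_{\leq k-10}(u)\,\partial\partial_t\phi$ piece, arriving at the clean $\langle t\rangle^{-1}\epsilon_1^2$ bounds \eqref{2024march17eqn2} and \eqref{2024march3eqn21}. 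You should incorporate this finer decomposition (or otherwise remove the $\delta'$ loss) in addition to repairing the normal-form step.
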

\begin{proof}
 Recall the normal form we did in \eqref{norform} and the equation satisfied by $v$ in \eqref{normalform}. We proceed in two main steps as follows. 

 \medskip

\noindent \textbf{Step 1.}\quad  The estimate of  $v$-part.

\medskip

  Note that, since $Q(u, u)$ has one derivative outside, which contributes one degree of smallness, from the $L^2-L^\infty$-type bilinear estimate, we have 
 \[
 \forall k \in Z, \quad \|P_k Q(u(s), u(s))\|_{L^2}\lesssim 2^{k-10k_{+}} \min\{2^k\langle s\rangle^{-1+2\delta}, \langle s \rangle^{-3/2+\delta}\}\epsilon_1^2. 
 \]  
From the above estimate and the estimate of heat kernel, we have 
\be\label{2024march1eqn2}
\begin{split}
&\sum_{k\in \Z} 2^{k+4k_{+}}\|\int_0^t e^{(t-s)\Delta} P_kQ(u(s), u(s)) d s \big\|_{L^2_x}\\
&\lesssim \sum_{k\in \Z}  \int_0^t 2^{2k-5k_{+}}(1+2^k|t-s|)^{-10}  \min\{2^k\langle s\rangle^{-1+2\delta}, \langle s \rangle^{-3/2+\delta}\}\epsilon_1^2 d s\lesssim \langle t\rangle^{-3/2+3\delta}\epsilon_0. 
\end{split}
\ee

Now we estimate the contribution of $A_{\mu, \nu}(\big(\mathcal{N}_\phi \big)^{\mu}, U^{\nu})$. Note that, the rough estimate  obtained for $\mathcal{N}_\phi$ in \eqref{localizedphi} is almost sufficient. We first rule out most of cases and then focus on the main scenario.

By using the $L^2_x-L^\infty_x$-type bilinear estimate, and   the $L^\infty_\xi$-estimate of $(u, U)$,  we have
\[
\| P_{k} A_{\mu, \nu}(\big(P_{k_1}\mathcal{N}_\phi \big)^{\mu}, P_{k_2}U^{\nu})\|_{L^2_x}\lesssim 2^{-(N_0-5)k_{+}}\langle  s\rangle^{-3/2+3\delta}\epsilon_1^2. 
\]
The above estimate allows us to rule out the case $k\geq 0$ as follows, 
\be\label{2024march1eqn4}
\begin{split}
&\sum_{k\in \Z_{+}} 2^{k+4k_{+}}\|\int_0^t e^{(t-s)\Delta} P_k A_{\mu, \nu}(\big(P_{k_1}\mathcal{N}_\phi \big)^{\mu}, P_{k_2}U^{\nu})  d s \big\|_{L^2_x}\\
&\lesssim \sum_{k\in \Z_+}  \int_0^t 2^{-(N_0-10)k_{+}}(1+2^k|t-s|)^{-10}   \langle  s\rangle^{-3/2+3\delta}\epsilon_1^2 d s\lesssim \langle t\rangle^{-3/2+3\delta}\epsilon_0. 
\end{split}
\ee

Now, we focus on the case $k\leq 0.$ From the $L^2_x-L^2_x$-type bilinear estimate,   the obtained estimate \eqref{localizedphi} and the $L^\infty_\xi$-estimate of $U$,   we first rule out the High $\times$ Low and Low $\times$ High type interaction as follows, 
\be\label{2024march1eqn51}
\begin{split}
&\sum_{(k_1,k_2)\in \chi_k^2\cup \chi_k^3} \| P_{k} A_{\mu, \nu}(\big(P_{k_1}\mathcal{N}_\phi \big)^{\mu}, P_{k_2}U^{\nu})\|_{L^1_x}\lesssim 2^{5k/4 }\langle  s\rangle^{-1}, \\
\Longrightarrow  & \sum_{k\in \Z_{-}}\sum_{(k_1,k_2)\in \chi_k^2\cup \chi_k^3} 2^{k+4k_{+}} \| \int_{0}^t e^{(t-s)\Delta}    P_{k} A_{\mu, \nu}(\big(P_{k_1}\mathcal{N}_\phi(s) \big)^{\mu}, P_{k_2}U^{\nu}(s)) d s\|_{L^2_x}\\ 
&\lesssim  \sum_{k\in \Z_{-}} \int_0^t 2^{9k/4 } (1+2^{2k}|t-s|)^{-10} \langle  s\rangle^{-1}\epsilon_1^2 d s\lesssim \langle t \rangle^{-1}\epsilon_0. \\
\end{split}
\ee 

Lastly, we consider  the High $\times$ High type interaction. For this case, we use different strategies for different pieces of $\mathcal{N}_\phi$. Recall \eqref{mainsysnew}.  We decompose  $\mathcal{N}_\phi$ into two pieces as follows, 
\be\label{nolinearityphi}
\begin{split}
\mathcal{N}_\phi&= \mathcal{N}_\phi^1+ \mathcal{N}_\phi^2, \quad   \mathcal{N}_\phi^1= \sum_{k\in \Z} -2 P_{\leq k-10}(u_i) \p_i P_{k}(\p_t \phi), \quad 
\mathcal{N}_\phi^2=  \mathcal{N}_\phi - \mathcal{N}_\phi^1. 
\end{split}
\ee
For $\mathcal{N}_\phi^2$ part, the following improved estimate holds since the $L^2_x$ of $\nabla u$ decay faster than $u$,
\be
\| P_k\mathcal{N}_\phi^2\|_{L^2}\lesssim 2^{k-3k_{+}} \langle s \rangle^{-3/2+2\delta}\epsilon.
\ee
From the above estimate and  the estimate of symbol in \eqref{symbolnormest}, we have 
\be\label{2024march1eqn52}
\begin{split}
&\sum_{k\in \Z_{-}}\sum_{(k_1,k_2)\in \chi_k^1} 2^{k+4k_{+}} \| \int_{0}^t e^{(t-s)\Delta}    P_{k} A_{\mu, \nu}(\big(P_{k_1}\mathcal{N}_\phi^2(s) \big)^{\mu}, P_{k_2}U^{\nu}(s))  ds\|_{L^2_x}\\ 
&\lesssim  \sum_{k\in \Z_{-}} \int_0^t 2^{2k } (1+2^{2k}|t-s|)^{-10} \langle  s\rangle^{-3/2+2\delta}\epsilon_1^2 d s\lesssim \langle t \rangle^{-1}\epsilon_0. \
\end{split}
\ee

It remains to consider the contribution of $\mathcal{N}_\phi^1.$ For this case, as in \eqref{2024feb19eqn41}, we use super localized decay estimate for the half wave. More precisely, we first consider the case when the frequency of $u$ in $\mathcal{N}_\phi^1$ is less than $2^k$. For this case,   we decompose the high frequency inputs into fine pieces as follows, 
\be\label{2024march1eqn21}
\begin{split}
&\|P_kA_{\mu, \nu}(P_{k_1}\big( P_{\leq k}(u_i)  P_{k_1'}(\p_t  \p_i \phi) \big)^{\mu}, P_{k_2}U^{\nu}(s))\|_{L^1_x}\\
&\lesssim \sum_{\begin{subarray}{c}
 n_1, n_2\in [2^{k_1-{k}-5}, 2^{k_1-{k}+5}]\cap \Z\\ 
 |n_1-n_2|\leq 2^{10}\\
 \end{subarray} } \|P_kA_{\mu, \nu}(P_{k_1}\big( P_{k'}(u_i)   P_{k_1'; {k},n_1}(\p_t \p_i \phi) \big)^{\mu}, P_{k_2; {k}, n_2}U^{\nu}(s))\|_{L^1_x},
\end{split}
\ee
where $|k_1'-k_1|\leq 5$ and    $P_{k_2;{k},n_1}$ denotes the Fourier multiplier operator with the Fourier symbol $\psi_{k_2}(\xi) \psi_{{k}}(|\xi|-2^{{k}}m)$. 

From the $L^2-L^2-L^\infty_x$ type multilinear estimate,  the  estimate \eqref{2024feb16eqn31},     the estimate of symbol in   \eqref{symbolnormest}, and the super localized decay estimate \eqref{decayestimate} in Lemma \ref{decaylemma},   we have 
\be\label{2024march1eqn22}
\begin{split}
\eqref{2024march1eqn21} & \lesssim  \sum_{\begin{subarray}{c}
 n_1, n_2\in [2^{k_1-{k}-5}, 2^{k_1-{k}+5}]\cap \Z\\ 
 |n_1-n_2|\leq 2^{10}\\
 \end{subarray} } \langle s\rangle^{-1/2} 2^{ -k_{1,-}} \|   P_{k_1';{k},n_1}(\p_t \p_i \phi)  \|_{L^2_x} \|P_{\leq k}(u)\|_{L^2}\\ 
 &\quad \times \big[ 2^{ {k} }\epsilon_1 + 2^{-m/4+3k_1/4+{k}/2} \|\nabla_\xi \widehat{V}(\xi) \psi_{k_2}(\xi)\psi_{{k}}(|\xi| - 2^{{k}} n_2 ) \|_{L^2} \big]\\
 &\lesssim  2^{  {k}/2+k_1-4k_{1,+}} \epsilon_1^2 \langle s \rangle^{-1+\delta}. 
\end{split}
\ee
Similarly, for any $k'\in [k,k_1-10]$,  we have
\[
\|P_kA_{\mu, \nu}(P_{k_1}\big( P_{k'}(u_i)  P_{k_1'}(\p_t  \p_i \phi) \big)^{\mu}, P_{k_2}U^{\nu}(s))\|_{L^1_x}\lesssim \langle s \rangle^{-1/2} 2^{k'/2+k_1-4k_{1,+}}\| P_{k'}(u)\|_{L^2}.
\]
Moreover, from $L^2-L^\infty_x-L^\infty_x$ type multilinear estimate, we have 
\[
\sum_{k'\leq k_1-10}\|P_kA_{\mu, \nu}(P_{k_1}\big( P_{k'}(u_i)  P_{k_1'}(\p_t  \p_i \phi) \big)^{\mu}, P_{k_2}U^{\nu}(s))\|_{L^1_x}\lesssim \langle s \rangle^{-3/2} 2^{k_1-3k_{1,+}}\epsilon_1^2. 
\]
From the above obtained  estimates, we have 
\be\label{2024march17eqn2}
\begin{split}
&\sum_{k\in \Z_{-}}\sum_{(k_1,k_2)\in \chi_k^1} 2^{k+4k_{+}} \| \int_{0}^t e^{(t-s)\Delta}    P_{k} A_{\mu, \nu}(\big(P_{k_1}\mathcal{N}_\phi^1(s) \big)^{\mu}, P_{k_2}U^{\nu}(s))  ds \|_{L^2_x}\\
&\lesssim \sum_{k\in \Z_{-}} \int_0^t \epsilon_1^2 2^k(1+2^{2k}|t-s|)^{-10} \big[\min\big\{ 2^{3k/2} \langle s \rangle^{-1+\delta}, \langle s \rangle^{-3/2} \big\} + 2^k \langle s\rangle^{-5/4+2\delta} \big] d s \lesssim  \langle t \rangle^{-1}\epsilon_1^2. 
\end{split}
\ee
To sum up, after combining the obtained estimates \eqref{2024march1eqn4},  \eqref{2024march1eqn51}, and \eqref{2024march17eqn2}, we have 
\be
\sum_{k\in \Z} 2^{k+4k_{+}} \| \int_{0}^t e^{(t-s)\Delta}    P_{k} A_{\mu, \nu}(\big(\mathcal{N}_\phi^1(s) \big)^{\mu},  U^{\nu}(s)) d s\|_{L^2_x} \lesssim \langle t\rangle^{-1}\epsilon_1^2. 
\ee
Due to symmetry of the bilinear form $A_{\mu, \nu}(\cdot, \cdot)$, from the above estimate and the obtained estimate \eqref{2024march1eqn2}, we have
\be\label{2024march3eqn21}
\sum_{k\in \Z} 2^{k+4k_{+}} \| \int_{0}^t e^{(t-s)\Delta}    P_{k}\mathcal{N}_v(s)   ds \|_{L^2_x} \lesssim \langle t\rangle^{-1}\epsilon_1^2. 
\ee

 \medskip

\noindent \textbf{Step 2.}\quad    The estimate of  the normal form part.

 \medskip

Recall \eqref{2024feb18eqn1} and \eqref{nullstrucdec}. We first use trivial estimates to rule out the very low frequency case and the relatively high frequency case. More precisely, from the estimate of symbol in \eqref{symbolnormest},  the $L^2_x\rightarrow L^1_x$ type Sobolev embedding and the $L^2_x-L^2_x$-type bilinear estimate, we have
\be
\begin{split}
&\sum_{k\in \Z, k\notin[-m/2-10\delta m, 2m/N_0]} 2^{k+4k_+}\| P_k A_{\mu, \nu}(U^{\mu}(t), U^{\nu}(t))\|_{L^2}\\ 
&\lesssim \sum_{k\in \Z, k\notin[-m/2-10\delta m, 2m/N_0]} 2^{2k-(N_0-10)k_++\delta m } \epsilon_1^2\lesssim 2^{-m}\epsilon_0.\\
\end{split} 
\ee

Now, we focus on the case $k\in[-m/2-10\delta m, 2m/N_0]\cap \Z$. To exploit the wave-wave-wave-type null structure, we localize the angle between $\xi $ and $\pm \eta$ as follows, 
\be\label{2024march3eqn19}
\begin{split}
&P_k A_{\mu, \nu}(U^{\mu}(t), U^{\nu}(t))=  \sum_{(k_1,k_2)\in \chi_k^1\cup \chi_k^2\cup\chi_k^3}\sum_{l\in [\bar{l},2]} T_{k;k_1,k_2}^{l}(t,x),\quad \bar{l}:=-m/2-\beta(k,k_1,k_2)/2, \\
& T_{k;k_1,k_2}^{l}(t,x):= \int_{\R^3}\int_{\R^3} e^{ix\cdot\xi + i t\Phi^{\mu, \nu}(\xi, \eta)}  a^l_{\mu, \nu}(\xi-\eta, \eta)  \widehat{V^\mu}(t, \xi-\eta)\widehat{V^\nu}(t,  \eta) d\eta d \xi,\\
&a^l_{\mu, \nu}(\xi-\eta, \eta):= a_{\mu, \nu}(\xi-\eta, \eta) \varphi_{l;\bar{l}}(\frac{\xi}{|\xi|}\times  \frac{\eta}{|\eta|}), \\
& \beta(k,k_1,k_2):= { k \mathbf{1}_{(k_1,k_2)\in \chi_k^1 } + (2k-k_1)\mathbf{1}_{(k_1,k_2)\in \chi_k^2} + k_2 \mathbf{1}_{(k_1,k_2)\in \chi_k^3} }. 
\end{split}
\ee

Based on the possible range of $(k_1, k_2)$, we proceed in sub-steps as follows. 

 \medskip

  \textbf{Step 2A.}\quad  If $(k_1,k_2)\in \chi_k^1$, i.e., $|k_1-k_2|\leq 10.$

 \medskip

From the bilinear estimate \eqref{bilinearestsummary} in Lemma \ref{bilinearest}, and the estimate of symbol in \eqref{symbolnormest}, we have 
\be\label{2024march3eqn11}
\|T_{k;k_1,k_2}^{\bar{l}}(t,x)\|_{L^2_x}\lesssim 2^{-k_{1,-}+2\bar{l}} 2^{k+4k_1/3-10k_{1,+}}\epsilon_1^2\lesssim 2^{-m+k_1/3-8k_{1,+}}\epsilon_1^2.
\ee
For $T_{k;k_1,k_2}^{l}(t,x)$, we do  integration by parts in $\eta$ once. As a result, we have
 \[
\begin{split}
T_{k;k_1,k_2}^{l}(t,x)&:=T_{k;k_1,k_2}^{l;1}(t,x)+ T_{k;k_1,k_2}^{l;2}(t,x)\\
T_{k;k_1,k_2}^{l;1}(t,x)& :=\sum_{\tilde{\mu}\in \{+,-\}}\int_{\R^3}\int_{\R^3} e^{ix\cdot\xi + i t\Phi^{\mu, \nu}(\xi, \eta)} \nabla_\eta\cdot \big[ \frac{i\nabla_\eta \Phi^{\mu, \nu}(\xi, \eta)}{t |\nabla_\eta \Phi^{\mu, \nu}(\xi, \eta)|^2} a^l_{\mu, \nu}(\xi-\eta, \eta) \big]   \widehat{V^\mu}(t, \xi-\eta)\widehat{V^\nu}(t,  \eta)  d\eta d \xi, \\
T_{k;k_1,k_2}^{l;2}(t,x)& := \int_{\R^3}\int_{\R^3} e^{ix\cdot\xi + i t\Phi^{\mu, \nu}(\xi, \eta)} \nabla_\eta\cdot \big[ \widehat{V^\mu}(t, \xi-\eta)\widehat{V^\nu}(t,  \eta)\big]   \frac{i\nabla_\eta \Phi^{\mu, \nu}(\xi, \eta)}{t |\nabla_\eta \Phi^{\mu, \nu}(\xi, \eta)|^2}   a^l_{\mu, \nu}(\xi-\eta, \eta)    d\eta d \xi. 
\end{split}
\]
For $T_{k;k_1,k_2}^{l;1}(t,x)$, we do integration by parts in $\eta$ again. As a result, we have 
\[
\begin{split}
T_{k;k_1,k_2}^{l;1}(t,x)& :=\int_{\R^3}\int_{\R^3} e^{ix\cdot\xi + i t\Phi^{\mu, \nu}(\xi, \eta)}  \nabla_\eta\cdot \big[ \frac{i\nabla_\eta \Phi^{\mu, \nu}(\xi, \eta)}{t |\nabla_\eta \Phi^{\mu, \nu}(\xi, \eta)|^2}  \nabla_\eta\cdot \big[ \frac{i\nabla_\eta \Phi^{\mu, \nu}(\xi, \eta)}{t |\nabla_\eta \Phi^{\mu, \nu}(\xi, \eta)|^2} \\ 
&\quad \times  a^l_{\mu, \nu}(\xi-\eta, \eta)\big]  \widehat{V^\mu}(t, \xi-\eta)\widehat{V^\nu}(t,  \eta)   \big] d\eta d \xi .
\end{split}
\]
Note that $ \big({(\xi-\eta)}/{|\xi-\eta|}\big)\times \big(\eta/|\eta|\big)= (\xi\times \eta)/\big(|\xi-\eta||\eta| \big)$. 
From the estimate of symbol in \eqref{symbolnormest}, and  the bilinear estimate \eqref{bilinearestsummary} in Lemma \ref{bilinearest},      we have 
\be\label{2024march3eqn12}
\begin{split}
\sum_{l\in (\bar{l}, 2]\cap \Z}\|T_{k;k_1,k_2}^{l;1}(t,x)\|_{L^2_x}
&\lesssim 2^{-\max\{k_{1}, k_2\}_{-}-2m-2(k+2l)}  2^{k+4k_1/3+2l-10k_{1,+}}\epsilon_1^2 \lesssim 2^{-m+k_1/3-8k_{1,+}}\epsilon_1^2.  
\end{split}
\ee
Moreover, from  the estimate of symbol in \eqref{symbolnormest},   the bilinear estimate \eqref{bilinearestsummary} in Lemma \ref{bilinearest},   the estimate \eqref{2024feb16eqn31}, and the decay estimate \eqref{decayestimate} in Lemma \ref{decaylemma},  we have 
\be\label{2024march3eqn13}
\begin{split}
\sum_{l\in (\bar{l}, 2]\cap \Z}\|T_{k;k_1,k_2}^{l;2}(t,x)\|_{L^2_x}
&\lesssim 2^{-\max\{k_{1}, k_2\}_{-}-m-(k-k_1+l)+l} 2^{-(1-\alpha) k_1+2\delta m} 2^{-m/2+k_1/2-8k_{1,+}} \\ 
& \lesssim 2^{-3m/2+3\delta m-k +(3/2+\alpha)k_1-8k_{1,+}}\epsilon_1^2. \\
\end{split}
\ee

 \medskip

  \textbf{Step 2B.}\quad   If $(k_1,k_2)\in \chi_k^2$, i.e., $k_1\leq k-10.$

  \medskip
By using the same strategy, from  the estimate of symbol in \eqref{symbolnormest},   the bilinear estimate \eqref{bilinearestsummary} in Lemma \ref{bilinearest}, and the decay estimate \eqref{decayestimate}, we have 
\be\label{2024march3eqn14}
\begin{split}
\|T_{k;k_1,k_2}^{\bar{l}}(t,x)\|_{L^2_x}&\lesssim 2^{-k_{+}+ 2l } 2^{k_1/3+2k-10k_{+}}\epsilon_1^2\lesssim 2^{-m+k_1/3-8k_{+}}\epsilon_1^2, \\
\sum_{l\in (\bar{l}, 2]\cap \Z}\|T_{k;k_1,k_2}^{l;1}(t,x)\|_{L^2_x}
&\lesssim \sum_{l\in (\bar{l}, 2]\cap \Z} 2^{-k_{+}-2m-2(2k-k_1+2l)+2l} 2^{k_1/3+2k-10k_{+}}\lesssim 2^{-m+k_1/3-8k_{+}}\epsilon_1^2, \\
\sum_{l\in (\bar{l}, 2]\cap \Z}\|T_{k;k_1,k_2}^{l;2}(t,x)\|_{L^2_x}
&\lesssim 2^{-k_{+}-m-(k-k_1+l)+l} 2^{-(1-\alpha)k_1+2\delta m} 2^{-m/2+k_2/2-8k_{2,+}}  \\ 
& \lesssim 2^{-5m/4 +\alpha k_1  -8k_{+}}\epsilon_1^2.  \\ 
\end{split}
\ee

 \medskip

  \textbf{Step 2C.}\quad  If $(k_1,k_2)\in \chi_k^2\cup\chi_k^3$,  i.e., $k_2\leq k-10.$

   \medskip

 By using the same strategy, from  the estimate of symbol in \eqref{symbolnormest},   the bilinear estimate \eqref{bilinearestsummary} in Lemma \ref{bilinearest}, and the decay estimate \eqref{decayestimate}, we have 
\be\label{2024march3eqn15}
\begin{split}
\|T_{k;k_1,k_2}^{\bar{l}}(t,x)\|_{L^2_x}&\lesssim 2^{-k_{+}+ 2l } 2^{4k_2/3+k-10k_{+}}\epsilon_1^2\lesssim 2^{-m+k_2/3-8k_{+}}\epsilon_1^2, 
\\
\sum_{l\in (\bar{l}, 2]\cap \Z}\|T_{k;k_1,k_2}^{l;1}(t,x)\|_{L^2_x}
&\lesssim \sum_{l\in (\bar{l}, 2]\cap \Z} 2^{-k_{+}-2m-2(k_2+2l)+2l} 2^{4k_2/3+k-10k_{+}}\lesssim 2^{-m+k_2/3-8k_{+}}\epsilon_1^2, \\
\sum_{l\in (\bar{l}, 2]\cap \Z}\|T_{k;k_1,k_2}^{l;2}(t,x)\|_{L^2_x}
&\lesssim  \sum_{l\in (\bar{l}, 2]\cap \Z} 2^{-k_{+}-m-l+l} 2^{-(1-\alpha)k_2+2\delta m-8k_{+}}\min\{ 2^{-m/2+k_1/2 }, 2^{k_2+l+2k_1/3}\}\\ 
& \lesssim 2^{-m/2-\alpha m/4+3\delta m +\alpha k_2/2 -8k_{+}}\epsilon_1^2.\\
\end{split}
\ee
 
Recall the decomposition \eqref{2024march3eqn19}. After combining the   obtained estimates (\ref{2024march3eqn11}--\ref{2024march3eqn15}), we have 
\be\label{2024march4eqn91}
\sum_{ k\in[-m/2-10\delta m, 2m/N_0]\cap \Z}2^{k+4k_+}\|P_k A_{\mu, \nu}(U^{\mu}(t), U^{\nu}(t))\|_{L^2}\lesssim 2^{-m}\epsilon_1^2. 
\ee
To sum up, our desired estimate \eqref{2024march1eqn1} holds from the above estimate and the obtained estimate \eqref{2024march3eqn21}. 
\end{proof}

\subsection{The estimate of $Z_\phi(t)$}
\begin{lemma}\label{Linfwavepart}
Let $t_1, t_2\in [2^{m-1},2^m]\subset [0, T], m\in \Z_{+}$, s.t., $m\gg 1$. Under the bootstrap assumption \eqref{bootstrap}, we have 
\be\label{2024march1eqn41}
\sup_{k\in \Z} 2^{k/3+ 10k_{+}} \|\int_{t_1}^{t_2} e^{i s  |\xi|} \widehat{\mathcal{N}_\phi}(s, \xi) \psi_k(\xi) d  s \|_{L^\infty_\xi} \lesssim 2^{-0.04 m +10\delta m}\epsilon_0. 
\ee
\end{lemma}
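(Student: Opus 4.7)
The plan is to bound $\mathcal{I}_k(\xi) := \int_{t_1}^{t_2} e^{is|\xi|} \widehat{\mathcal{N}_\phi}(s,\xi)\psi_k(\xi)\, ds$ in $L^\infty_\xi$, weighted by $2^{k/3+10k_+}$. First, using the improved energy estimates \eqref{improvedenergyvel} and \eqref{improvedenergywave} together with the volume of Fourier support and the crude inclusion $L^2_\xi \hookrightarrow L^\infty_\xi$ with a $2^k$ loss, the very low frequencies $k \le -2m$ and the relatively high frequencies $k \ge 3m/N_0$ can be disposed of by integrating the pointwise bound trivially. It therefore suffices to work in the bulk range $k \in [-2m, 3m/N_0] \cap \Z$.

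Recalling the decomposition \eqref{2024feb27eqn55}, $\mathcal{N}_\phi$ splits into two quadratic heat$\times$wave pieces $-2\p_i(u_i\p_t\phi)$ and $-\p_i(\Delta u_i\,\phi)$, and two cubic pieces $-\p_i(u_i u_j\p_j\phi)$ and $-\p_i(\d \widetilde{\mathcal{N}}_u\,\phi)$. For the cubic pieces, a direct $L^1_x$ estimate with three factors plus one derivative outside gives
\[
\|\widehat{P_k(\cdot)}\|_{L^\infty_\xi} \lesssim 2^k \|u(s)\|_{H^3}^2 \|\nabla_{t,x}\phi(s)\|_{W^{3,\infty}} \lesssim 2^k \langle s\rangle^{-5/4+3\delta}\epsilon_1^3,
\]
which after integrating in $s$ over an interval of length at most $2^m$ contributes admissibly to \eqref{2024march1eqn41}. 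The term $\p_i(\Delta u_i\,\phi)$ enjoys enhanced decay from the $\Delta u$ factor; combining the $Z_u$-based $L^\infty_\xi$ bound for $u$ with the $Z_\phi$-based $L^\infty_\xi$ bound for $\phi$ (or alternatively the Lebesgue estimates from the bootstrap) yields an integrable-in-time pointwise bound.

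The decisive piece is the quadratic heat$\times$wave interaction $-2\p_i(u_i\p_t\phi)$. On the Fourier side, expanding $\widehat{\p_t\phi} = \tfrac12\sum_{\mu\in\{+,-\}}\widehat{U^\mu} = \tfrac12\sum_\mu e^{-i\mu s|\eta|}\widehat{V^\mu}$ gives the representation
\[
\sum_\mu \int_{t_1}^{t_2}\!\!\int_{\R^2} e^{is(|\xi|-\mu|\eta|)} (-i\xi_i)\,\widehat{u_i}(s,\xi-\eta)\,\widehat{V^\mu}(s,\eta)\, d\eta\, ds,
\]
with phase $\Phi^\mu(\xi,\eta) = |\xi|-\mu|\eta|$. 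Split the $\eta$-integration by comparing $|\Phi^\mu|$ to a threshold near $2^{-m/2+\delta m}$. On the nonresonant region, integrate by parts in $s$: the boundary terms are controlled by $\|\widehat{u}(s)\psi_{k_1}\|_{L^\infty_\xi}\|\widehat{V^\mu}(s)\psi_{k_2}\|_{L^\infty_\xi}$ (both supplied by the bootstrap through $Z_u$ and $Z_\phi$) with the volume of the nonresonant region as multiplier, while the bulk term involves $\p_s \widehat{u}$ and $\p_s\widehat{V^\mu}$, which are, respectively, heat-dissipative (gaining $2^{2k_1}$) or cubic (via \eqref{profphi}), hence integrable. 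On the resonant region, which is a thin angular/radial neighborhood of $\{\mu=+,\ |\xi|=|\eta|,\ \xi\parallel\eta\}$, we apply the angular decomposition of Lemma \ref{bilinearest} together with the super-localized linear decay estimate \eqref{decayestimate} of Lemma \ref{decaylemma} to the $V^\mu$ factor, and bound $u$ in $L^\infty_\xi$ via $Z_u$.

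The main obstacle will be the resonance for the $\mu=+$ branch, where $\Phi^+$ vanishes on the codimension-one set $|\xi|=|\eta|$ and degenerates to zero on the $\xi\parallel\eta$ slice. Here one cannot integrate by parts in $s$, and the bilinear kernel has no built-in null structure (the null cancellations identified in Section \ref{null} sit inside $\widetilde{Q}(\phi,\phi)$, not in $\mathcal{N}_\phi$). The small gain $2^{-0.04m}$ must be extracted by a careful balance between: the radial-angular thickness of the resonance set (two small factors $\sim 2^{-m/2}$), the $\langle s\rangle^{-1}$ decay of $Z_u$ absorbing the weight $2^{-k-4k_+}$, and the $L^\infty_\xi$ bound for $\widehat{V^\mu}$ with the $2^{k/3}$ weight. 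Balancing these against the parameter $\delta$ and the low-frequency parameter $\alpha$ from the bootstrap \eqref{bootstrap} is the delicate point, but it mirrors the resonance analysis already successfully carried out in the $Z_u$ estimate of Lemma \ref{decayvelinf}, to which the present argument will be largely parallel.
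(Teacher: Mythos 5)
There is a genuine gap, and it sits exactly where you dismiss the null structure. In your own decomposition \eqref{2024feb27eqn55}, the term $-\p_i(\d\widetilde{\mathcal{N}}_u\,\phi)$ is \emph{not} a ``cubic piece with two heat factors'': $\widetilde{\mathcal{N}}_u$ contains the wave--wave self-interaction $\widetilde{Q}(\phi,\phi)$, so this contribution includes $\widetilde{Q}(\phi,\phi)\cdot\nabla\phi$, a pure wave$\times$wave$\times$wave term. Your claimed bound $\|P_k(\cdot)\|_{L^1_x}\lesssim 2^k\|u\|_{H^3}^2\|\nabla_{t,x}\phi\|_{W^{3,\infty}}\lesssim \langle s\rangle^{-5/4+3\delta}\epsilon_1^3$ does not apply to it; a direct estimate gives only about $\langle s\rangle^{-1+2\delta}$ (or worse), which after integration over $[t_1,t_2]$ with $t_2\sim 2^m$ yields no negative power of $2^m$ at all, let alone $2^{-0.04m}$. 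Your statement that ``the null cancellations sit inside $\widetilde{Q}(\phi,\phi)$, not in $\mathcal{N}_\phi$'' is backwards for this lemma: the moment you replace $\p_t u$ by the equation (which your decomposition does), $\widetilde{Q}(\phi,\phi)$ enters $\mathcal{N}_\phi$, and the paper devotes an entire trilinear lemma (Lemma \ref{nullcubictermLin}, with angular decompositions, the w$\times$w$\rightarrow$w null symbol, and integration by parts in frequency) to extract the $2^{-1.04m}$ decay for precisely these terms.

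The treatment of the main quadratic piece $-2\p_i(u_i\p_t\phi)$ is also not viable as written. The phase $\Phi^\mu(\xi,\eta)=|\xi|-\mu|\eta|$ is independent of the direction of $\eta$, so the set $\{|\Phi^\mu|\le 2^{-m/2+\delta m}\}$ is a full annulus, thin only radially: the ``two small factors $\sim 2^{-m/2}$ from radial-angular thickness'' you invoke do not exist, and the decisive balance is left as an unproven ``delicate point.'' Moreover, on the nonresonant region your integration by parts in $s$ makes $\p_s\widehat{u}$ appear, which contains $\widehat{\widetilde{Q}(\phi,\phi)}$; this is only quadratic in the wave, decays like $\langle s\rangle^{-1/2+\delta}$ in $L^2$, and after dividing by the phase ($\lesssim 2^{m/2}$) and integrating in time it is not ``integrable'' --- again you need the null structure. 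The paper avoids your resonance dichotomy altogether: since the heat factor carries no oscillation, $|\nabla_\eta(s\Phi^\mu)|=s$ identically, so it integrates by parts in $\eta$ (no stationary points), converts $\nabla_\eta\widehat{u}$ via the vector fields $S,\Omega$ and the equation, closes with an integration by parts in time against the never-vanishing denominator $|\xi-\eta|^2+i(|\xi|-\mu|\eta|)$, and sends every wave$\times$wave remnant to Lemma \ref{nullcubictermLin}. Without these two ingredients --- the $\eta$-integration by parts (or some substitute for your flawed resonant-set count) and the trilinear null-structure estimate --- your argument cannot reach \eqref{2024march1eqn41}.
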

\begin{proof}

Recall \eqref{highphi} and \eqref{mainsysnew}. We decompose $\mathcal{N}_\phi$ into two parts as follows, 
\be\label{2024march17eqn5}
\begin{split}
\mathcal{N}_\phi&= M_\phi + G_\phi, \quad M_\phi:= -2u\cdot \nabla_x\p_t \phi -\Delta u\cdot \nabla_x \phi, \quad G_\phi:=-(\p_t -\Delta)u\cdot \nabla_x \phi -u\cdot \nabla(u\cdot \nabla_x\phi). 
\end{split}
\ee

More precisely, from the $L^2_x-L^2_x-L^\infty_x$ type multilinear estimate, we have 
\be\label{2024march4eqn44}
\sum_{|\alpha|\leq 10}\|\nabla_x^\alpha\p_i(u_i(s) u_j(s) \p_j \phi(s)) \|_{L^1_\xi} \lesssim \langle s \rangle^{-3/2+2\delta}\epsilon_1^3. 
\ee
Moreover, from  the estimate \eqref{2024march4eqn81} in Lemma \ref{nullcubictermLin} and   the $L^2_x-L^2_x-L^\infty_x$ type multilinear estimate,  we have
\be\label{2024march3eqn31}
\begin{split}
  &  \sup_{k\in \Z}2^{k/3+ 10k_{+}}\|\mathcal{F}_{x\rightarrow \xi}\big[ Q(u, u)\cdot \nabla_x \phi\big](\xi)\psi_k(\xi) \|_{L^\infty_\xi}\lesssim \langle s \rangle^{-3/2+5\delta}\epsilon_1^3, \\ 
    &  \sup_{k\in \Z}2^{k/3+ 10k_{+}}\|\mathcal{F}_{x\rightarrow \xi}\big[ \widetilde{Q}(\phi, \phi)\cdot \nabla_x \phi\big](\xi)\psi_k(\xi) \|_{L^\infty_\xi}\lesssim \langle s \rangle^{-1.04+5\delta}\epsilon_1^3, \\ 
\Longrightarrow \quad &\sup_{k\in \Z}2^{k/3+  10k_{+}}\|\mathcal{F}_{x\rightarrow \xi}\big[(\p_t -\Delta)u_i\p_i \phi\big](\xi)\psi_k(\xi) \|_{L^\infty_\xi}\lesssim \langle s \rangle^{-5/4+5\delta}\epsilon_1^3. \\
\end{split}
\ee
From the above estimate and the obtained estimate \eqref{2024march4eqn44}, we have 
\be\label{2024march4eqn45}
\sup_{k\in \Z} 2^{k/3+ 10k_{+}}\|\mathcal{F}_{x\rightarrow \xi}\big[\int_{t_1}^{t_2} e^{is\d}G_\phi d s\big](\xi) \psi_k(\xi)\|_{L^\infty_\xi} \lesssim 2^{-0.04m+10\delta m}\epsilon_1^2. 
\ee

Now, we focus on the contribution of the main part $M_\phi$. Note that, from the Duhamel's formula, on the Fourier side, for any $\xi\in supp(\psi_k(\cdot))$,  we have  
\be\label{2024march1eqn49}
\begin{split}
&\mathcal{F}_{x\rightarrow \xi}\big[\int_{t_1}^{t_2} e^{is\d}M_\phi d s\big](\xi) \psi_k(\xi) =\sum_{\mu\in \{+,-\}}\sum_{(k_1,k_2)\in \chi_k^1\cup\chi_k^2\cup \chi_k^3}\int_{t_1}^{t_2} I^\mu_{k;k_1, k_2}(s, \xi) d s, \\
 &I^\mu_{k;k_1, k_2}(s, \xi):= \int_{\R^3} e^{is(|\xi|-\mu|\eta|) } m^{\mu;j}_{k,k_1,k_2}(\xi, \eta)\cdot \widehat{u_j}(s, \xi-\eta)\widehat{V^{\mu}}(s, \eta) d \eta, \\
\end{split}
\ee
where $m^{\mu;j}_{k,k_1,k_2}(\xi, \eta):= i\eta_j\big(1+ c_\mu |\xi-\eta|^2|\eta|^{-1}\big)\psi_k(\xi) \psi_{k_1}(\xi-\eta)\psi_{k_2}(\eta).$ By  using the energy norm and the volume of support, we can rule out the very low and relative high frequency as follows, 
\be\label{2024march4eqn47}
\begin{split}
&\sum_{(k_1, k_2)\in  \Z^2/[-2m/3 , 2m/N_0]^2}  2^{k/3+ 10 k_{+}} \|I^\mu_{k;k_1, k_2}(s, \xi)\|_{L^\infty_\xi}\lesssim \sum_{(k_1, k_2)\in  \Z^2/[-2m/3, 2m/N_0]^2}  2^{k/3+ 10 k_{+}-m/2+\delta m }\\
 &\qquad \times \min\{2^{2\min\{k_1, k_2\}/3}, 2^{-(N_0-5)\max\{k_1,k_2\}_{} }\}\epsilon_1^2\lesssim 2^{-1.1m }\epsilon_1^2. \\
 \end{split}
\ee

Now, we focus on the case $k_1,k_2\in [-2m/3 , 2m/N_0]$. For this case,   we do integration by parts in $\eta$ once for $I^\mu_{k;k_1, k_2}(t,\xi)$. As a result, we have 
\be\label{2024march4eqn21}
\begin{split}
&I^\mu_{k;k_1, k_2}(s, \xi):=  I^{\mu;1}_{k;k_1, k_2}(s, \xi)+  I^{\mu;2}_{k;k_1, k_2} (s, \xi), \\
&  I^{\mu;1}_{k;k_1, k_2}(s, \xi):= \int_{\R^3} e^{i s(|\xi|-\mu|\eta|) }\widehat{u_j}(s, \xi-\eta)  \nabla_\eta\cdot   \big[  \frac{i \mu \eta}{s|\eta|}  m^{\mu;j}_{k,k_1,k_2}(\xi, \eta)  \widehat{V^{\mu}}(s, \eta)\big] d \eta, \\
& I^{\mu;2}_{k;k_1, k_2}(s, \xi):= \int_{\R^3} e^{i s(|\xi|-\mu|\eta|) } \nabla_\eta\big( \widehat{u_j}(t, \xi-\eta)\big) \cdot   \big[  \frac{i \mu \eta}{s|\eta|}    m^{\mu;j}_{k,k_1,k_2}(\xi, \eta)  \widehat{V^{\mu}}(s, \eta)\big] d \eta.  \\
\end{split}
\ee
Note that,  
\[
\begin{split}
\nabla_\xi   \widehat{u}(s, \xi)
&= - \frac{\xi}{|\xi|^2}  \widehat{Su}(s, \xi) + \frac{(\xi_2,-\xi_1)}{|\xi|^2}(\xi_2\p_{\xi_1}-\xi_1\p_{\xi_2})   \widehat{ u}(s, \xi)  +  s \frac{\xi}{|\xi|^2}  \widehat{\p_s u}(s, \xi)\\
&= - \frac{\xi}{|\xi|^2}  \widehat{Su}(s, \xi) + \frac{(\xi_2,-\xi_1)}{|\xi|^2}    \widehat{\Omega u}(s, \xi)  +  s \frac{\xi}{|\xi|^2}  \widehat{\mathcal{N}_u}(s, \xi) - s \xi \widehat{u}(s, \xi).
\end{split}
\]
From the above equality, we decompose $I^{\mu;2}_{k;k_1, k_2}(s, \xi)$ into three parts as follows, 
\be\label{2024march4eqn22}
I^{\mu;2}_{k;k_1, k_2}(s, \xi):= I^{\mu;2;1}_{k;k_1, k_2}(s, \xi) +  I^{\mu;2;2}_{k;k_1, k_2}(s, \xi) + I^{\mu;2;3}_{k;k_1, k_2}(s, \xi) ,  
\ee
\[
\begin{split}
I^{\mu;2;1}_{k;k_1, k_2}(s, \xi) & :=  \int_{\R^3}   e^{is(|\xi|-\mu|\eta|) }      m^{\mu;j}_{k,k_1,k_2}(\xi, \eta)     \frac{i \mu \eta}{s|\eta|}  \cdot  \big[ - \frac{\xi-\eta}{|\xi-\eta|^2}  \widehat{Su_j}(s, \xi-\eta) + \frac{(\xi_2-\eta_2,-\xi_1+\eta_1)}{|\xi-\eta|^2}\\ 
&\times   \widehat{\Omega u_j}(s, \xi-\eta) + \frac{s(\xi-\eta)}{|\xi-\eta|^2}\mathcal{F}_{}[Q_j(u, u)](s,\xi- \eta) \big]  \widehat{V^{\mu}}(s, \eta) d \eta, \\
I^{\mu;2;2}_{k;k_1, k_2}(s, \xi) & :=  \int_{\R^3}  e^{is(|\xi|-\mu|\eta|) }   \big[  \frac{i \mu \eta\cdot(\xi-\eta)}{|\eta||\xi-\eta|^2}       m^{\mu;j}_{k,k_1,k_2}(\xi, \eta)  \widehat{V^{\mu}}(s, \eta)  \mathcal{F}_{}[\widetilde{Q}_j(\phi, \phi)](s,\xi- \eta)   d \eta, \\
I^{\mu;2;3}_{k;k_1, k_2}(s, \xi) & :=  \int_{\R^3}  e^{is (|\xi|-\mu|\eta|) }  \frac{i \mu \eta\cdot(\xi-\eta)}{|\eta| }      m^{\mu;j}_{k,k_1,k_2}(\xi, \eta)  \widehat{V^{\mu}}(s, \eta)  \widehat{u_j}(s,\xi- \eta)   d \eta, \\
\end{split}
\]
where we use $Q_j(\cdot, \cdot)$ and $\widetilde{Q}_j$ to denote the $j$-th component of the vector $Q(\cdot, \cdot)$ and $\widetilde{Q}(\cdot, \cdot).$

For $I^{\mu;2;3}_{k;k_1, k_2}(s, \xi)$, we repeat the procedure we did for $I_\mu(t, \xi)$. As a result, we have
\be\label{2024march4eqn24}
I^{\mu;2;3}_{k;k_1, k_2}(s, \xi)= \widetilde{I}^{\mu;1}_{k;k_1,k_2}(s, \xi) +\widetilde{I}^{\mu;2}_{k;k_1,k_2}( s, \xi) + \widetilde{I}^{\mu;3}_{k;k_1,k_2}(s, \xi)+\widetilde{I}^{\mu;4}_{k;k_1,k_2}(s, \xi), \\
\ee
\[
\begin{split}
\widetilde{I}^{\mu;1}_{k;k_1,k_2}(s,\xi) &=   \int_{\R^3}  e^{is (|\xi|-\mu|\eta|) }   \widehat{u_j}(s, \xi-\eta)  \nabla_\eta\cdot   \big[  \frac{ \eta (\xi-\eta)\cdot\eta}{s|\eta|^2}       m^j_\mu(\xi, \eta)  \widehat{V^{\mu}}(s, \eta)\big] d \eta,  \\
\widetilde{I}^{\mu;2}_{k;k_1,k_2}(s,\xi)  &= -  \int_{\R^3}  e^{is (|\xi|-\mu|\eta|) }         m^{\mu;j}_{k,k_1,k_2}(\xi, \eta) \frac{ \eta (\xi-\eta)\cdot\eta}{s|\eta|^2}           \cdot  \big[ \frac{\xi-\eta}{|\xi-\eta|^2}  \widehat{Su_j}(s, \xi-\eta) - \frac{(\xi_2-\eta_2,-\xi_1+\eta_1)}{|\xi-\eta|^2}\\ 
&\times ((\xi_2-\eta_2)\p_{\xi_1}-(\xi_1-\eta_1)\p_{\xi_2})   \widehat{\Omega u_j}(s, \xi-\eta) - \frac{s(\xi-\eta)}{|\xi-\eta|^2}\mathcal{F}_{}[Q_j(u, u)](s,\xi- \eta) \big]  \widehat{V^{\mu}}(s, \eta)  d \eta, \\
\widetilde{I}^{\mu;3}_{k;k_1,k_2}(s,\xi) &= -  \int_{\R^3}  e^{is (|\xi|-\mu|\eta|) }      \frac{ (\eta\cdot(\xi-\eta))^2}{|\eta|^2|\xi-\eta|^2}      m^{\mu;j}_{k,k_1,k_2}(\xi, \eta)  \widehat{V^{\mu}}(s, \eta)  \mathcal{F}_{}[\widetilde{Q}_j(\phi, \phi)](s,\xi- \eta)   d \eta, \\
\widetilde{I}^{\mu;4}_{k;k_1,k_2}(s,\xi) &= -\int_{\R^3} e^{is (|\xi|-\mu|\eta|) }     \frac{\big( \eta\cdot(\xi-\eta)\big)^2 }{|\eta|^2 }      m^{\mu;j}_{k,k_1,k_2}(\xi, \eta)   \widehat{V^{\mu}}(s, \eta)  \widehat{u_j}(s,\xi- \eta)   d \eta.
\end{split}
\]

For $\widetilde{I}^{\mu;4}_{k;k_1,k_2}(s,\xi)$, we do integration by parts in times $s$ once. As a result, we have
\be\label{2023march3eqn41}
\begin{split}
&\int_{t_1}^{t_2}  \widetilde{I}^{\mu;4}_{k;k_1,k_2}(s,\xi)  d s  =  End^{\mu}_{k;k_1,k_2}(t_1,t_2,\xi)  + \sum_{i=1,2,3}\mathcal{I}^{\mu;i}_{k;k_1,k_2}(t_1,t_2,\xi),\\
&End^{\mu}_{k;k_1,k_2}(t_1,t_2,\xi)  =  \sum_{j=1,2}(-1)^{j} \int_{\R^3} e^{it_j (|\xi|-\mu|\eta|) }  \frac{   m^j_\mu(\xi, \eta)  \widehat{V^{\mu}}(t_j,  \eta)  \widehat{u_j}(t_j,\xi- \eta)}{|\xi-\eta|^2 + i (|\xi|-\mu|\eta|)}   \frac{\big( \eta\cdot(\xi-\eta)\big)^2 }{|\eta|^2 }    d \eta,\\
&\mathcal{I}^{\mu;1}_{k;k_1,k_2}(t_1,t_2,\xi)  = \int_{t_1}^{t_2}  \int_{\R^3} e^{i s (|\xi|-\mu|\eta|) }  \frac{   m^j_\mu(\xi, \eta) \p_s \widehat{V^{\mu}}(s,  \eta)  \widehat{u_j}(s,\xi- \eta)}{|\xi-\eta|^2 + i (|\xi|-\mu|\eta|)}   \frac{\big( \eta\cdot(\xi-\eta)\big)^2 }{|\eta|^2 }    d \eta d  t,\\
&\mathcal{I}^{\mu;2}_{k;k_1,k_2}(t_1,t_2,\xi)   = \int_{t_1}^{t_2}  \int_{\R^3} e^{i s (|\xi|-\mu|\eta|) }  \frac{   m^j_\mu(\xi, \eta)  \widehat{V^{\mu}}(s,  \eta) \mathcal{F}[Q_j(u,u)](s,\xi- \eta)}{|\xi-\eta|^2 + i (|\xi|-\mu|\eta|)}   \frac{\big( \eta\cdot(\xi-\eta)\big)^2 }{|\eta|^2 }    d \eta d  t,\\
&\mathcal{I}^{\mu;3}_{k;k_1,k_2}(t_1,t_2,\xi)   = \int_{t_1}^{t_2}  \int_{\R^3} e^{i s (|\xi|-\mu|\eta|) }  \frac{   m^j_\mu(\xi, \eta)  \widehat{V^{\mu}}(s,  \eta) \mathcal{F}[\widetilde{Q}_j(\phi,\phi)](s,\xi- \eta)}{|\xi-\eta|^2 + i (|\xi|-\mu|\eta|)}   \frac{\big( \eta\cdot(\xi-\eta)\big)^2 }{|\eta|^2 }    d \eta d  t.\\
\end{split}
\ee

From the $L^2-L^2$ type bilinear estimate, the estimate \eqref{2024feb16eqn31}, we have
\be\label{2024march4eqn1}
\begin{split}
\sup_{k\in \Z} 2^{k/3+ 10k_{+}}\big( &\|  I^{\mu;1}_{k;k_1, k_2}(s, \xi) \psi_k(\xi)\|_{L^\infty_\xi} + \|  I^{\mu;2;1}_{k;k_1, k_2}(s, \xi) \psi_k(\xi)\|_{L^\infty_\xi} + \| \widetilde{I}^{\mu;1}_{k;k_1, k_2}(s, \xi) \psi_k(\xi)\|_{L^\infty_\xi} \\ 
&+ \| \widetilde{I}^{\mu;2}_{k;k_1, k_2}(s, \xi) \psi_k(\xi)\|_{L^\infty_\xi} \big)\lesssim \langle s \rangle^{-3/2+2\delta}\epsilon_1^2, \\
\sup_{k\in \Z} 2^{k/3+10k_{+}}\big(&\| End^{\mu}_{k;k_1,k_2}(t_1,t_2,\xi) \psi_k(\xi) \|_{L^\infty_\xi} + \|\mathcal{I}^{\mu;1}_{k;k_1,k_2}(t_1,t_2,\xi)  \psi_k(\xi)\|_{L^\infty_\xi} \\ 
&+ \| \mathcal{I}^{\mu;2}_{k;k_1,k_2}(t_1,t_2,\xi)  \psi_k(\xi)\|_{L^\infty_\xi}\big) \lesssim 2^{-0.04 m+10\delta}\epsilon_1^2. 
\end{split}
\ee
From the estimate \eqref{2024march4eqn81} in Lemma \ref{nullcubictermLin}, we have
\be\label{2024march4eqn3}
\begin{split}
\sup_{k\in \Z} 2^{k/3+10k_{+}}\big(&2^m \|  I^{\mu;2;2}_{k;k_1, k_2}(s, \xi) \psi_k(\xi)\|_{L^\infty_\xi} + 2^m \| \widetilde{I}^{\mu;3}_{k;k_1,k_2}(s,\xi) \psi_k(\xi)\|_{L^\infty_\xi} \\ 
& +  \| \mathcal{I}^{\mu;3}_{k;k_1,k_2}(t_1,t_2,\xi) \psi_k(\xi)\|_{L^\infty_\xi} \big) \lesssim 2^{-0.04 m +10\delta}\epsilon_1^2.
\end{split}
\ee
Recall the decomposition in \eqref{2024march4eqn21}, \eqref{2024march4eqn22},   \eqref{2024march4eqn24}, and \eqref{2023march3eqn41}.  To sum up, after combining the obtained estimates \eqref{2024march4eqn1} and \eqref{2024march4eqn3}, for any  $k_1,k_2\in [-m/2-10\delta m, 2m/N_0]\cap\Z$,  we have 
\be
 \sup_{k\in \Z} 2^{k/3+ 10 k_{+}} \|\int_{t_1}^{t_2} I^\mu_{k;k_1, k_2}(s, \xi)  ds \|_{L^\infty_\xi}\lesssim 2^{-m/2+10\delta m}\epsilon_1^2.
\ee
Recall decompositions in  \eqref{2024march1eqn51} and  \eqref{velocityeqn}. Hence our desired estimate \eqref{2024march1eqn41} holds from the above estimate, the obtained estimates \eqref{2024march4eqn45} and \eqref{2024march4eqn47}.
\end{proof}

\begin{lemma}\label{nullcubictermLin}
Let $  t\in [2^{m-1},2^m], m\in \Z_+$. For any $\mu, \nu, \iota\in \{+,-\},$ and bilinear operator $T(\cdot, \cdot)$ with symbol $m\in \mathcal{S}^\infty_{k,k_1,k_2} $, s.t., $\|m\|_{\mathcal{S}^\infty_{k,k_1,k_2}}\lesssim 2^{-k_{1,-}+ 2\max\{k_1,k_2\}_{+}}$,   under the bootstrap assumption \eqref{bootstrap}, we have 
\be\label{2024march4eqn81}
\sup_{k\in \Z} \sum_{\begin{subarray}{c}
 (k_1,k_2)\in  \cup_{i=1,2,3}\chi_{k}^i\\
 (k_1',k_2')\in \cup_{i=1,2,3}\chi_{k_1}^i\\
\end{subarray}} 2^{k/3+10k_{+}}\|\mathcal{F}\big[T\big(P_{k_1}\big( \widetilde{Q}(P_{k_1'}U^{\mu}(t), P_{k_2'}U^{\nu}(t))\big), P_{k_2}U^{\iota}(t)\big)\big](\xi) \|_{L^\infty_\xi}\lesssim 2^{-1.04m}\epsilon_1^3.
\ee
where the bilinear operator $\widetilde{Q}(\cdot, \cdot)$ is defined by the symbol $\widetilde{q}(\xi-\eta, \eta)$  in \eqref{symbolphi}. 
\end{lemma}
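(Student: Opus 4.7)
The plan is to exploit the null-structure factorization $\widetilde{q}=q_{null}^{wwh}\cdot q_{null}^{www}$ of \eqref{nullstrucdec} together with the sharp $\langle t\rangle^{-1/2}$ pointwise decay of $U$ from Lemma \ref{decayphilemma} and the angular bilinear machinery of Lemma \ref{bilinearest}. Writing $f=P_{k_1'}U^\mu$, $g=P_{k_2'}U^\nu$, $h=P_{k_2}U^\iota$ and using the hypothesis on the symbol of $T$, I would first reduce the $L^\infty_\xi$ estimate to an $L^1_x$ estimate on $\widetilde{Q}(f,g)\cdot h$ (multiplied by $2^{-k_{1,-}+2\max\{k_1,k_2\}_+}$), then split by H\"older: either $L^1_x\leq L^2_x\cdot L^2_x$ in some configurations, or $L^1_x\leq L^1_x\cdot L^\infty_x$ (with the Bernstein upgrade $\|P_{k_1}\cdot\|_{L^1_x}\lesssim 2^{k_1}\|P_{k_1}\cdot\|_{L^2_x}$) in others, choosing which factor is placed in $L^\infty_x$ so as to extract the $\langle t\rangle^{-1/2}$ decay of Lemma \ref{decayphilemma}.

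The bilinear piece $\|P_{k_1}\widetilde{Q}(f,g)\|_{L^2_x}$ is then handled by inserting the angular partition \eqref{angularpartition} about $\xi/|\xi|=\mu\eta/|\eta|$ at scale $l\geq\bar{l}$: the $q_{null}^{www}$ factor supplies an angular smallness $2^l$, while $q_{null}^{wwh}$ is tamed by the non-singular symbol bound \eqref{symbolnormest}. Selecting the stationary-phase threshold $\bar{l}\sim-m/2-\beta(k_1,k_1',k_2')/2$ as in \eqref{2024march3eqn19} and, where needed, integrating by parts in $\eta$ once as in Steps 2A--2C of Lemma \ref{decayvelinf}, together with Lemma \ref{bilinearest}, yields roughly $2^{-m/2-\alpha m/4+O(\delta m)}\epsilon_1^2$. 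Multiplying by the $\langle t\rangle^{-1/2}$ from the $L^\infty_x$-factor gives an overall $2^{-m-\alpha m/4+O(\delta m)}\epsilon_1^3$, comfortably beating the target $2^{-1.04m}$ once $\alpha=1/10$ and $\delta$ is sufficiently small. The weight $2^{k/3+10k_+}$ on the left is absorbed by the derivative smallness of $\widetilde{Q}$ at the output (at low $k$) and by the $H^{N_0}$ energy bound from \eqref{bootstrap} (at high $k$); summation over the five dyadic indices $(k,k_1,k_2,k_1',k_2')$ is organized via the trichotomy \eqref{2024march17eqn1} applied both to the outer pair $(k_1,k_2)$ and to the inner pair $(k_1',k_2')$, with very-low- and very-high-frequency tails killed by volume of support and Sobolev respectively.

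The main obstacle will be the inner Low$\times$High regime $k_1'\ll k_2'$ combined with the outer High$\times$High regime $k_1\sim k_2\gg k$: in this case the factor $2^{-k_{1,-}}$ from \eqref{symbolnormest} no longer provides a gain, and the null vector $\xi/|\xi|-\mu\eta/|\eta|$ driving $q_{null}^{www}$ ceases to be forced small by frequency alignment, so the angular cancellation degenerates. In this regime I would replace the angular decomposition by the super-localized radial decomposition of Lemma \ref{decaylemma}, as was done in \eqref{2024march1eqn22} during the $Z_u$ estimate, trading the lost angular gain for super-localization in $|\xi|$ and a compensating $2^{-m/4}$ factor; combined with the $\langle t\rangle^{-1/2}$ decay of the remaining half-wave factor this closes the estimate with margin.
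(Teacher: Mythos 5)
There is a genuine gap, and it sits exactly at the point where you try to combine the two sources of decay. Your plan is to bound the inner piece in $L^2_x$ (getting roughly $2^{-m/2-\alpha m/4+O(\delta m)}\epsilon_1^2$ for $\|P_{k_1}\widetilde{Q}(f,g)\|_{L^2_x}$) and then multiply by the $\langle t\rangle^{-1/2}$ of the third factor by placing it in $L^\infty_x$; to do this you invoke the ``Bernstein upgrade'' $\|P_{k_1}\cdot\|_{L^1_x}\lesssim 2^{k_1}\|P_{k_1}\cdot\|_{L^2_x}$. That inequality is false: Bernstein only raises integrability ($\|P_kF\|_{L^2_x}\lesssim 2^{k}\|P_kF\|_{L^1_x}$ in 2D), it never controls $L^1_x$ by $L^2_x$, and no spatial localization of $\widetilde{Q}(f,g)$ is available here. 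Without it, the H\"older pairings at your disposal are $L^2_x\times L^2_x$ (no decay from the third factor, only $\|P_{k_2}U^\iota\|_{L^2}\lesssim 2^{\delta m}\epsilon_1$) or $L^1_x\times L^\infty_x$ (which requires an $L^1_x$ bound on $\widetilde{Q}(f,g)$ whose only natural estimate, $\|U\|_{L^2}^2$, carries no time decay). Even granting the paper's own optimal bilinear bound $\|P_{k_1}\widetilde{Q}(U^\mu,U^\nu)\|_{L^2}\lesssim 2^{-m}\epsilon_1^2$ from \eqref{2024march5eqn11} (which is already stronger than what you claim), the $L^2\times L^2$ pairing caps out at $2^{-m+\delta m}\epsilon_1^3$, short of the target $2^{-1.04m}$. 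So the ``bilinear estimate first, then paste on the third wave by H\"older'' architecture cannot reach the stated bound; your fallback with the super-localized radial decomposition does not repair this, since it still tries to multiply the two gains through the same invalid step.

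The paper closes this gap by a genuinely trilinear Fourier analysis rather than a bilinear-then-H\"older one. After fixing $k,k_1,k_2,k_1',k_2'$ (modulo an $m^4$ logarithmic loss), it cuts the inner interaction at the nonstandard angular threshold $\bar a=-0.45m-\beta(k_1,k_1',k_2')/2$ (not your $-m/2-\beta/2$), so the large-angle inner piece already satisfies $\|P_{k_1}\widetilde{Q}^{>}\|_{L^2}\lesssim 2^{-1.1m}\epsilon_1^2$ and can be closed by the crude $L^2\times L^2$ pairing; for the small-angle piece it writes the full trilinear oscillatory integral $\mathcal{H}^{\mu,\nu,\iota;l}_{k;k_1,k_2;k_1',k_2'}$ in $(\eta,\sigma)$, propagates the inner angular constraint to the outer pair via \eqref{2024march5eqn1}, performs a second angular decomposition between $\xi-\eta$ and $\eta-\sigma$, and integrates by parts in $\eta$ on the combined phase $\mu|\eta-\sigma|+\nu|\sigma|+\iota|\xi-\eta|$, harvesting volume-of-support gains in both angles simultaneously. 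This is how the extra $2^{-0.04m-}$ beyond $2^{-m}$ is produced; your proposal never engages the combined three-wave phase, and that is the missing idea.
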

\begin{proof}
Recall the symbol \eqref{symbolphi}. By using the volume of support of frequency variable and the $H^{N_0}$-norm of the wave part, we have
\be\label{2024march4eqn82}
\begin{split}
& \sum_{\begin{subarray}{c}
(k_1,k_2)\in \Z^2/[-3m/4, 2N_0/m]^2\\ 
(k_1',k_2')\in \Z^2\\
\end{subarray}}2^{k/3+10k_+}\| \mathcal{F}\big[T\big(P_{k_1}\big( \widetilde{Q}(P_{k_1'}U^{\mu}(t), P_{k_2'}U^{\nu}(t))\big), P_{k_2}U^{\iota}(t)\big)\big](\xi)\|_{L^\infty_\xi}\\ 
&\lesssim \sum_{(k_1,k_2)\in \Z^2/[-3m/4, 2N_0/m]^2}\min\{2^{5\min\{k_1,k_2\}/3}, 2^{-(N-15)\max\{k_1,k_2\}_{+}}\}\epsilon_1^3\lesssim  2^{-5m/4}\epsilon_1^3. 
\end{split}
\ee

Moreover, for any fixed $(k_1, k_2)\in [-3m/4, 2N_0/m]^2$, we can rule out the very small $k_1',k_2'$ case and the relatively big $k_1',k_2'$ case as follows, 
\be\label{2024march4eqn84}
\begin{split}
& \sum_{(k_1',k_2')\in \Z^2/[-3m/4, 2N_0/m]^2}2^{k/3+10k_+}\| \mathcal{F}\big[T\big(P_{k_1}\big( \widetilde{Q}(P_{k_1'}U^{\mu}(t), P_{k_2'}U^{\nu}(t))\big), P_{k_2}U^{\iota}(t)\big)\big](\xi)\|_{L^\infty_\xi}\\ 
&\lesssim \sum_{(k_1,k_2)\in \Z^2/[-3m/4, 2N_0/m]^2}\min\{2^{5\min\{k_1,k_2\}/3}, 2^{-(N-15)\max\{k_1,k_2\}_{+}}\}\epsilon_1^3\lesssim  2^{-5m/4}\epsilon_1^3. 
\end{split}
\ee

  Since there are at most $m^4$ cases left, which only cause logarithmic loss, it would be sufficient to let  $k_1,k_2,k_1',k_2'\in [-3m/4, 2N_0/m]\cap \Z$ be fixed. As in the proof of the obtained estimate \eqref{2024march4eqn91}, we also exploit the null structure of the bilinear operator $\widetilde{Q}(\cdot, \cdot)$. Motivated from the proof of  \eqref{2024march4eqn91}, we decompose into two parts as follows, 
 \be\label{2024march4eqn85}
  \widetilde{Q}(P_{k_1'}U^{\mu}(t), P_{k_2'}U^{\nu}(t))\big)=  \widetilde{Q}^{\leq }(P_{k_1'}U^{\mu}(t), P_{k_2'}U^{\nu}(t))\big)+  \widetilde{Q}^{>}(P_{k_1'}U^{\mu}(t), P_{k_2'}U^{\nu}(t))\big),
 \ee
 where $
\forall \star\in\{\leq , >\} $, 
 \be
 \begin{split}
 \widetilde{Q}^{\star }(P_{k_1'}U^{\mu}(t), P_{k_2'}U^{\nu}(t))\big) &= \int_{\R^3} \int_{\R^3} e^{i x \cdot \xi } \widetilde{q}(\xi-\eta, \eta)\psi_{\star \bar{a}}(\frac{\xi}{|\xi|}\times\frac{\eta}{|\eta|})   \widehat{U^\mu}(t, \xi-\eta)  \widehat{U^\nu}(t, \eta)d\eta d \xi. 
 \end{split}
  \ee
  where $\bar{a}:= -0.45 m-\beta(k_1,k_1',k_2')/2,$ and $\beta(k_1,k_1',k_2')$ is defined in \eqref{2024march3eqn19}.

Since the threshold we choose is $\bar{a}$ instead of $-m/2-\min\{k_1',k_2',k_1\}/2$, and the symbol $\widetilde{q}(\xi-\eta, \eta)$ of the bilinear operator $\widetilde{Q}^{  }(\cdot, \cdot)$ is better than the symbol of the bilinear operator $A_{\mu, \nu}(U^{\mu}(t), U^{\nu}(t))$ in the sense that it provides the smallness of the output frequency, which plays the small role of $2^k$ in \eqref{2024march4eqn91}.     After rerunning the argument used for obtaining \eqref{2024march4eqn91}, we have 
\be\label{2024march5eqn11}
\begin{split}
  &\|P_{k_1}\widetilde{Q}^{  }(P_{k_1'}U^{\mu}(t), P_{k_2'}U^{\nu}(t))\big)\|_{L^2}\lesssim 2^{-m}\epsilon_1^2,\\
 &\|P_{k_1}\widetilde{Q}^{> }(P_{k_1'}U^{\mu}(t), P_{k_2'}U^{\nu}(t))\big)\|_{L^2}\lesssim 2^{-1.1 m}\epsilon_1^2.\\
  \end{split}
\ee
From the first estimate in \eqref{2024march5eqn11}, we can rule out very low output frequency as follows, 
\[
\sup_{k\in (-\infty, -0.15m)\cap Z }2^{k/3+10k_{+}}  \| \mathcal{F}\big[T\big(P_{k_1}\big( \widetilde{Q}(P_{k_1'}U^{\mu}(t), P_{k_2'}U^{\nu}(t))\big), P_{k_2}U^{\iota}(t)\big)\big](\xi)\|_{L^\infty_\xi}\lesssim 2^{-1.05 m}\epsilon_1^3.
\]

  Now, we focus on the case $k\in [-0.15m, 2m/N_0]\cap Z$ and  $k_1,k_2,k_1',k_2'\in [-3m/4, 2N_0/m]\cap \Z$. From the second estimate in \eqref{2024march5eqn11} and the $L^2-L^2$-type bilinear estimate, we rule out further the large angle case as follows,  
\be\label{2024march6eqn21}
2^{k/3+10k_{+}} \|\mathcal{F}\big[T\big(P_{k_1}\big( \widetilde{Q}^{>}(P_{k_1'}U^{\mu}(t), P_{k_2'}U^{\nu}(t))\big), P_{k_2}U^{\iota}(t)\big)\big](\xi)\|_{L^\infty_\xi}\lesssim 2^{-1.05 m}\epsilon_1^3. 
\ee

It remains to estimate the contribution of $\widetilde{Q}^{\leq}(P_{k_1'}U^{\mu}(t), P_{k_2'}U^{\nu}(t))\big)$. Note that,  on the Fourier side, after doing dyadic decomposition for the angle between $\xi-\eta$ and $\pm(\eta-\sigma)$, we have
\[
 \mathcal{F}\big[T\big(P_{k_1}\big( \widetilde{Q}^{\leq }(P_{k_1'}U^{\mu}(t), P_{k_2'}U^{\nu}(t))\big), P_{k_2}U^{\iota}(t)\big)\big](\xi)=\sum_{l_1\in [\bar{l},2]\cap \Z } \mathcal{H}_{k;k_1,k_2;k_1',k_2'}^{\mu,\nu, \iota;l}(t, \xi),  
\]
where
\be\label{2024march6eqn22}
\begin{split}
\mathcal{H}_{k;k_1,k_2;k_1',k_2'}^{\mu,\nu, \iota;l}(t, \xi)&=\int_{\R^3}\int_{\R^3}e^{-it \big( \mu|\eta-\sigma|+\nu|\sigma|+ \iota|\xi-\eta|\big) } \widehat{V^{\mu}_{k_1'}}(t, \eta-\sigma)   \widehat{V^{\nu}_{k_2'}}(t, \sigma)  \widehat{V^{\iota}_{k_2}}(t, \xi-\eta)   \\
& \quad  \times  \widetilde{q}(\eta-\sigma, \sigma) m (\eta, \xi-\eta)\psi_{k_1}(\eta)  \varphi_{l;\bar{l}}(\frac{\xi-\eta}{|\xi-\eta|}\times\frac{\eta-\sigma}{|\eta-\sigma|})   \psi_{\leq \bar{a}}(\frac{\eta}{|\eta|}\times\frac{\sigma}{|\sigma|}) d \eta d \sigma, 
\end{split}
\ee
where $\bar{l}:=-m/2-\min\{k_1',k_1,k_2\}/2.$  Note that
\be\label{2024march5eqn1}
\begin{split}
|\frac{\eta-\sigma}{|\eta-\sigma|}\times \frac{\eta}{|\eta|}|\lesssim 2^{k_2'-k_1'+\bar{a}}, &\Longrightarrow  |\frac{\xi-\eta}{|\xi-\eta|}\times \frac{\eta}{|\eta|}|\lesssim  2^{k_2'-k_1'+\bar{a}} + 2^{l},\\ 
& \Longrightarrow \big|\frac{\xi}{|\xi|}\times \frac{\eta}{|\eta|}\big|\lesssim 2^{k_1-k}\big( 2^{k_2'-k_1'+\bar{a}} + 2^{l}\big). \\
\end{split}
\ee

For the threshold case, i.e., $l=\bar{l}$, we use the volume of support of $\eta, \sigma$. As a result, from the above estimate \eqref{2024march5eqn1}, we have
\be\label{2024march5eqn21}
\begin{split}
&\big|\mathcal{H}_{k;k_1,k_2;k_1',k_2'}^{\mu,\nu, \iota;\bar{l}}(t, \xi)\big|\lesssim  2^{k_1-k}\big( 2^{k_2'-k_1'+\bar{a}} + 2^{l}\big) 2^{2\bar{a}} 2^{4\min\{k_1,k_2\}/3+5\min\{k_1',k_2'\}/3-10\max\{k_1',k_2'\}_{+}-10k_{2,+}}\epsilon_1^3,\\
&\Longrightarrow \quad 2^{k/3+10k_{+}}\big|\mathcal{H}_{k;k_1,k_2;k_1',k_2'}^{\mu,\nu, \iota;\bar{l}}(t, \xi)\big|\lesssim 2^{-1.1 m}\epsilon_1^3. 
\end{split}
\ee
For the threshold case, i.e., $l> \bar{l}$, we do integration by parts in $\eta$ once. As a result, we have
\be\label{2024march6eqn24}
\mathcal{H}_{k;k_1,k_2;k_1',k_2'}^{\mu,\nu, \iota;l}(t, \xi)=\mathcal{H}_{k;k_1,k_2;k_1',k_2'}^{\mu,\nu, \iota;l;1}(t, \xi)+ \mathcal{H}_{k;k_1,k_2;k_1',k_2'}^{\mu,\nu, \iota;l;2}(t, \xi),
\ee
where
\[
\begin{split}
\mathcal{H}_{k;k_1,k_2;k_1',k_2'}^{\mu,\nu, \iota;l;1}(t, \xi)&=\int_{\R^3}\int_{\R^3}e^{-it \big( \mu|\eta-\sigma|+\nu|\sigma|+ \iota|\xi-\eta|\big) }\nabla_\eta \cdot \Big[  \widehat{V^{\mu}_{k_1'}}(t, \eta-\sigma)   \widehat{V^{\iota}_{k_2}}(t, \xi-\eta)  \widehat{V^{\nu}_{k_2'}}(t, \sigma) \Big] \widetilde{q}(\eta-\sigma, \sigma) \\
&  \quad  \times  m (\eta, \xi-\eta) \frac{- i \big( \mu \frac{\eta-\sigma}{|\eta-\sigma|} - \iota \frac{\xi-\eta}{|\xi-\eta|} \big)}{t\big|\mu \frac{\eta-\sigma}{|\eta-\sigma|} - \iota \frac{\xi-\eta}{|\xi-\eta|} \big|^2}  \psi_{k_1}(\eta)  \varphi_{l;\bar{l}}(\frac{\xi-\eta}{|\xi-\eta|}\times\frac{\eta-\sigma}{|\eta-\sigma|})   \psi_{\leq \bar{a}}(\frac{\eta}{|\eta|}\times\frac{\sigma}{|\sigma|})  d \eta d \sigma,\\
\mathcal{H}_{k;k_1,k_2;k_1',k_2'}^{\mu,\nu, \iota;l;2}(t, \xi)&=\int_{\R^3}\int_{\R^3}e^{-it \big( \mu|\eta-\sigma|+\nu|\sigma|+ \iota|\xi-\eta|\big) } \widehat{V^{\mu}_{k_1'}}(t, \eta-\sigma)   \widehat{V^{\iota}_{k_2}}(t, \xi-\eta)     \widehat{V^{\nu}_{k_2'}}(t, \sigma)   \\
& \quad   \times \nabla_\eta \cdot \Big[\frac{-i \big( \mu \frac{\eta-\sigma}{|\eta-\sigma|} - \iota \frac{\xi-\eta}{|\xi-\eta|} \big)}{t\big|\mu \frac{\eta-\sigma}{|\eta-\sigma|} - \iota \frac{\xi-\eta}{|\xi-\eta|} \big|^2}   \varphi_{l;\bar{l}}(\frac{\xi-\eta}{|\xi-\eta|}\times\frac{\eta-\sigma}{|\eta-\sigma|})   \psi_{\leq \bar{a}}(\frac{\eta}{|\eta|}\times\frac{\sigma}{|\sigma|}) \\
&\quad \times   \widetilde{q}(\eta-\sigma, \sigma) m (\eta, \xi-\eta)\psi_{k_1}(\eta)\Big] d \eta d \sigma.\\
\end{split} 
\]
For $\mathcal{H}_{k;k_1,k_2;k_1',k_2'}^{\mu,\nu, \iota;l;1}(t, \xi)$,  we use the $L^2-L^2$ type bilinear estimate and use the volume of support of $\eta, \sigma$. As  a result,  from the estimate \eqref{2024march5eqn1},  we have 
\be\label{2024march5eqn51}
\begin{split}
&2^{k/3+10k_{+}} \big|\mathcal{H}_{k;k_1,k_2;k_1',k_2'}^{\mu,\nu, \iota;l;1}(t, \xi)\big|\\
& \lesssim 2^{k/3+10k_{+}} 2^{-m-l+\bar{a}}\big[ 2^{\min\{k_1, k_2\}-(1-\alpha)k_{2}} 2^{(k_1-k)/2}\big( 2^{k_2'-k_1'+\bar{a}} + 2^{l}\big)^{1/2}\\ 
&\quad \times  2^{4\min\{k_1',k_2'\}/3+\bar{a}-10\min\{k_1',k_2'\}_{+}}      + 2^{4\min\{k_1,k_2\}/3}2^{k_1-k}\big( 2^{k_2'-k_1'+\bar{a}} + 2^{l}\big) 2^{\bar{a}/2} 2^{  \min\{k_1',k_2'\}-\alpha k_1'} \big] \epsilon_1^3\\
& \lesssim 2^{-2k/3+(1+\alpha)k_1+10k_+}\big(2^{-m-l} 2^{5\bar{a}/2+4\min\{k_1',k_2'\}/3-k_1'/2} +2^{-m-l/2} 2^{2\bar{a}+4\min\{k_1',k_2'\}/3} \big) \epsilon_1^3 \\ 
&\quad + 2^{-2k/3+7k_1/3+10k_+} \big(2^{-m+3\bar{a}/2+ \min\{k_1',k_2'\}-\alpha k_1'} + 2^{-m-l+5\bar{a}/2 +k_2'+\min\{k_1',k_2'\}   -(1+\alpha)k_1'} \big)  \epsilon_1^3 \\
&\lesssim 2^{-1.3m}\epsilon_1^3.
\end{split}
\ee

For $\mathcal{H}_{k;k_1,k_2;k_1',k_2'}^{\mu,\nu, \iota;l;2}(t, \xi)$,  we use the volume of support of $\eta, \sigma$.  As a result,  from the estimate \eqref{2024march5eqn1},  we have 
\be\label{2024march6eqn1}
\begin{split}
&2^{k/3+10k_{+}} \big|\mathcal{H}_{k;k_1,k_2;k_1',k_2'}^{\mu,\nu, \iota;l;2}(t, \xi)\big|\\
& \lesssim 2^{k/3+10k_{+}} 2^{-m-l+2\bar{a}}  2^{k_1-k}\big( 2^{k_2'-k_1'+\bar{a}} + 2^{l}\big)\\ 
&\quad \times 2^{2\min\{k_1,k_2\}+2\min\{k_1',k_2'\}-(k_2+k_1'+k_2')/3}\big[2^{-l-\min\{k_2,k_1'\}} + 2^{-\bar{a}-k_1}  \big]\\ 
&\lesssim 2^{-m-2k/3+5\min\{k_1,k_2\}/3+4\min\{k_1',k_2'\}/3+10k_+}\big[2^{-2l-\min\{k_2,k_1'\}+3\bar{a}+k_1+k_2'-k_1'} + 2^{-l+2\bar{a}+k_2'-k_1'}\\ 
&\quad +2^{-l+2\bar{a}+k_1-\min\{k_2, k_1'\}} + 2^{\bar{a}}\big]\lesssim 2^{-1.2m}\epsilon_1^3.  
\end{split}
\ee
Recall the decompositions in \eqref{2024march4eqn85},  \eqref{2024march6eqn22}, and \eqref{2024march6eqn24}. After combining the above two estimates and the obtained estimates \eqref{2024march6eqn21} and \eqref{2024march5eqn21}, we have
\[
\sup_{k\in [ -0.15m, 2m/N_0]\cap Z }2^{k/3+10k_{+}}  \| \mathcal{F}\big[T\big(P_{k_1}\big( \widetilde{Q}(P_{k_1'}U^{\mu}(t), P_{k_2'}U^{\nu}(t))\big), P_{k_2}U^{\iota}(t)\big)\big](\xi)\|_{L^\infty_\xi}\lesssim 2^{-1.05 m}\epsilon_1^3.
\]
Hence finishing the proof of our desired estimate \eqref{2024march4eqn81}. 
\end{proof}   

\subsection{Proof of the theorem \ref{maintheorem}}

In this subsection, we summarize the results we obtained so far and prove our main theorem \ref{maintheorem}. From the obtained estimate \eqref{improvedenergyvel} in Lemma \ref{highenergyu}, the obtained estimate \eqref{improvedenergyvelvec} in Lemma \ref{energyvelovec}, the obtained estimate \eqref{improvedenergywave} in Lemma \ref{energywavepart}, the obtained estimate \eqref{2024march1eqn1} in Lemma \ref{decayvelinf}, the obtained estimate \eqref{2024march1eqn41} in Lemma \ref{Linfwavepart}, the following improved estimate holds in the time interval $[0, T],$
\be\label{bootstrapimproved}
 \begin{split}
\sup_{t\in [0, T]} &\langle t \rangle^{1/2-\delta }\|u(t)\|_{H^{N_0+1}}+ \langle t \rangle^{1/2-2\delta }\big(\sum_{\Gamma\in \{S, \Omega\}} \|\Gamma u(t)\|_{H^{1}} \big)  + \langle t\rangle^{}  Z_u(t) + \langle  t \rangle^{-\delta} \|U(t)\|_{H^{N_0}}\\
 &+ \langle t \rangle^{-2\delta }\big(\sum_{\Gamma\in \{S, \Omega\}} \|U^\Gamma(t)\|_{L^{2}} + \sum_{k\in \Z_{-}} 2^{-\alpha k} \| P_k U^{\Gamma}(t)\|_{L^2} \big) + Z_{\phi}(t) \lesssim \epsilon_0.
\end{split}
 \ee
 Hence finishing the bootstrap argument, i.e., $T$ can be extended to infinity. Moreover, as a by product of the $Z_\phi(t)$-estimate, from the obtained estimate \eqref{2024march1eqn41} in Lemma \ref{Linfwavepart}, we have
 \be
 \begin{split}
&\sup_{k\in \Z} 2^{k/3+ 10k_{+}} \|\int_{t}^{\infty} e^{i s  |\xi|} \widehat{\mathcal{N}_\phi}(s, \xi) \psi_k(\xi) d  s \|_{L^\infty_\xi} \lesssim\langle t \rangle^{-0.04+10\delta}\epsilon_0, \\
 \Longrightarrow \quad & \sup_{k\in \Z} 2^{k/3+ 10k_{+}} \|\big[\widehat{U}(t,\xi)- e^{-it |\xi|} V_\infty(\xi) \big]  \psi_k(\xi) d  s \|_{L^\infty_\xi} \lesssim\langle t \rangle^{-0.04+10\delta}\epsilon_0,\\
\end{split}
 \ee
 where
\[
 V_\infty(\xi) := \widehat{U}_0(\xi) +  \int_{0}^{\infty} e^{i s  |\xi|} \widehat{\mathcal{N}_\phi}(s, \xi)   d  s.
\]
 Hence,  the nonlinear solution $U(t)$ scatters to a linear solution.

\end{document}